\newcommand{\CC}{{\mathbb{C}}}
\newcommand{\fA}{{\mathfrak{A}}}
\newcommand{\fS}{{\mathfrak{S}}}
\newcommand{\Char}{{\operatorname{char}\,}}
\newcommand{\Hom}{{\operatorname{Hom}}}
\newcommand{\PSL}{{\operatorname{L}}}
\newcommand{\SL}{{\operatorname{SL}}}
\newcommand{\GL}{{\operatorname{GL}}}
\newcommand{\PSU}{{\operatorname{U}}}
\newcommand{\SU}{{\operatorname{SU}}}
\newcommand{\Sp}{{\operatorname{Sp}}}
\newcommand{\PSp}{{\operatorname{S}}}
\newcommand{\OO}{{\operatorname{O}}}
\newcommand{\SO}{{\operatorname{SO}}}
\newcommand{\ch} {{\operatorname{ch}}}
\newcommand{\tw}[1]{{}^#1\!}
\newtheorem{thm}{Theorem}[section]
\newtheorem{lem}[thm]{Lemma}
\newtheorem{cor}[thm]{Corollary}
\newtheorem{prop}[thm]{Proposition}
\theoremstyle{definition}
\newtheorem{defn}[thm]{Definition}
\newtheorem{example}[thm]{Example}
\theoremstyle{remark}
\begin{document}

\title{Characteristic Polynomials and Fixed Spaces of Semisimple Elements}

\date{\today}

\author{Robert Guralnick}
\address{3620 S. Vermont Ave, Department of Mathematics, University of
Southern California, Los Angeles, CA 90089-2532, USA.}
\makeatletter
\email{guralnic@usc.edu}
\makeatother
\author{Gunter Malle}
\address{FB Mathematik, TU Kaiserslautern,
Postfach 3049, 67653 Kaisers\-lautern, Germany.}
\makeatletter
\email{malle@mathematik.uni-kl.de}
\makeatother

\dedicatory{Dedicated to Len Scott}
\subjclass[2010] {Primary 20C20}  
\thanks{The first author was partially supported by DMS 1001962. We thank
        Frank Calegari for his questions and comments.}

\begin{abstract}
Answering a question of Frank Calegari,  
we extend some of our earlier results on dimension of fixed point spaces
of elements in irreducible linear groups. We consider characteristic
polynomials rather than just fixed spaces. 
\end{abstract}

\maketitle


\section{Introduction} \label{sec:intro}

In \cite{GM1}, the authors answered a question of Peter Neumann and proved
that if $G$ is a nontrivial irreducible subgroup of $\GL_n(k)=\GL(V)$ with
$k$ a field, then there exists an element $g \in G$ with
$\dim C_V(g) \le (1/3) \dim V$  (where $C_V(g)$
denotes the fixed space of $g$ acting on $V$). The example $G=\SO_3(k)$ with
$k$ not of characteristic $2$ shows that $1/3$ is best possible.  

Frank Calegari asked if one could find $g \in G$ such that the characteristic
polynomial of $g$ acting on $V$ was of the form $(T-1)^e f(T)$ where
$f(1)\ne 0$ and $e < n/2 $. Calegari and Gee \cite{CG} are interested in the
irreducibility of the Galois representations associated to self-dual
cohomological automorphic forms for $\GL_n$, especially for small $n$. This
result is easily seen to be true for finite $G$ (or more generally compact $G$)
in characteristic $0$ by the orthogonality
relations (see \cite[Section 3]{GMa} for this simple proof). For finite groups
(or more generally for algebraic groups or if the characteristic is positive), 
the question reduces to finding a semisimple element $g \in G$ with
$\dim C_V(g)<n/2$
(recall an element in a linear group is semisimple if it is diagonalizable over
the algebraic closure --- for a finite group, this is equivalent
to saying that $\Char k$ does not divide the order of the element).

The authors in \cite[Thm.~1.3]{GM1} proved a conjecture of
Peter Neumann from 1966 about  the minimum  dimension of fixed spaces.
 In particular, if $G$ is finite and
the characteristic does not divide $|G|$, this gives:
 
\begin{thm}   \label{thm:gm1}
 Let $k$ be a field of characteristic $p \ge 0$ and $G$ a nontrivial finite
 subgroup of $\GL_n(k) = \GL(V)$ with $p$ not dividing $|G|$.
 If $V$ is an irreducible $kG$-module, then there exists a semisimple element
 $g \in G$ with $\dim C_V(g) \le (1/3) \dim V$.
\end{thm}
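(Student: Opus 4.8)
The plan is to reduce, in several stages, to the case where $G$ is close to a quasi-simple group and then to invoke the classification of finite simple groups. First note that since $p\nmid|G|$, every element of $G$ has order prime to $p$ and is therefore semisimple, so the semisimplicity in the conclusion is automatic and we need only produce some $g$ with $\dim C_V(g)\le(1/3)\dim V$. We may assume $V$ is faithful (replace $G$ by its image) and, by passing to $\bar k$, that $V$ is absolutely irreducible: if $V\otimes\bar k=W_1\oplus\cdots\oplus W_t$ with the $W_i$ Galois-conjugate, then $\dim_k C_V(g)/\dim_k V=\dim C_{W_1}(g)/\dim W_1$ for every $g$, since the eigenvalue $1$ is Galois-stable. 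Finally, because $p\nmid|G|$, the multiplicity of $1$ as an eigenvalue of $g$ on $V$ is $\frac1{o(g)}\sum_j\chi(g^j)$ and is thus determined by the character values, which match those of an ordinary complex character; so we may assume $k=\CC$.

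Before the reductions it is worth recording why averaging cannot suffice. Orthogonality gives $\frac1{|G|}\sum_g\chi(g)=0$, so some $g$ has $\operatorname{Re}\chi(g)\le 0$; writing $\operatorname{Re}\chi(g)=\dim C_V(g)+\sum_{\lambda\ne1}\operatorname{Re}\lambda\ge 2\dim C_V(g)-\dim V$ yields only the weaker bound $\dim C_V(g)\le(1/2)\dim V$ (essentially Calegari's question). The stronger constant $1/3$ is sharp: for $G=A_5\le\SO_3(\CC)$ every nontrivial element fixes a line, so $\dim C_V(g)=1=(1/3)\dim V$ for all $g\ne1$, and the average $\frac1{|G|}\sum_g\dim C_V(g)=\frac{62}{60}$ already exceeds $(1/3)\dim V$. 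Hence one cannot locate the required $g$ by averaging; one must exploit the structure of $G$ and find a specific good element.

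The reduction then proceeds as follows. If $V$ is imprimitive, say $V=\operatorname{Ind}_H^G W$ for a proper $H$, then $G$ permutes the $[G:H]$ summands and $\dim C_V(g)$ is controlled by the permutation action on this block system together with the $\dim C_W(\cdot)$; choosing elements that move most blocks and invoking the statement for $H$ by induction on $|G|$ gives the bound, so we may assume $V$ is primitive, the tensor-induced case being handled in the same spirit. For primitive $V$ the structure of $G$ is severely restricted: either $G$ normalizes an $r$-group of symplectic type (the extraspecial-normalizer case), where $\dim V$ is a prime power $r^m$ and suitable elements of the normalizer can be written down explicitly, or modulo scalars $G$ is almost simple with socle a nonabelian simple group $S$ and $V$ restricts to a faithful irreducible module for a quasi-simple cover of $S$. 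This last case is where the classification enters: one runs through the families --- alternating groups (using long cycles and the branching rules for the irreducible characters, which control the multiplicity of $1$), groups of Lie type (using regular semisimple elements, whose fixed space on any irreducible is small and bounded in terms of the rank and the action of a maximal torus), and the finitely many sporadic and small exceptional cases (settled by inspection of character tables).

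I expect the groups of Lie type to be the main obstacle, and within these the low-rank groups in their smallest representations, where a regular semisimple element can still fix a comparatively large subspace and the constant $1/3$ is genuinely delicate --- these are exactly the configurations (such as the three-dimensional orthogonal representation) that make $1/3$ best possible, so they must be analyzed individually rather than by a uniform eigenvalue estimate. Controlling the eigenvalue-$1$ multiplicity of well-chosen semisimple elements across all families, uniformly enough to beat $(1/3)\dim V$ while correctly accommodating the sharp cases, is the crux of the argument.
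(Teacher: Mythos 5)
First, a point of orientation: this paper does not prove Theorem~\ref{thm:gm1} at all --- it is quoted as the coprime special case of \cite[Thm.~1.3]{GM1} (Neumann's conjecture), the only new observation being exactly the one you open with, namely that when $p\nmid|G|$ every element of $G$ is automatically semisimple. Your preliminary reductions are fine: faithfulness, passage to $\bar k$ via Galois conjugates, the lift to characteristic $0$ via Brauer characters (legitimate since all elements are $p'$-elements and the decomposition matrix is trivial in coprime characteristic), and the remark that orthogonality/averaging only yields $1/2$ --- indeed the $\fA_5\le\SO_3$ computation $\tfrac{62}{60}>1$ correctly shows the average exceeds $(1/3)\dim V$, so a specific element must be produced.

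The genuine gap is that your outline never identifies the mechanism that actually produces the constant $1/3$, and the substitute you propose would fail precisely at the sharp cases. The engine of \cite{GM1} (visible in this paper as Theorem~\ref{thm:generationresult} plus Scott's Lemma \cite{scott}, yielding Corollary~\ref{cor:simple}) is: find three \emph{conjugate} elements $x,y,z$ with $xyz=1$ generating $G$ (or at least a subgroup with $C_V=C_{V^*}=0$); Scott's Lemma gives $\dim C_V(x)+\dim C_V(y)+\dim C_V(z)\le\dim V$, and conjugacy forces the common value to be at most $(1/3)\dim V$. Your plan instead calls for ``uniform eigenvalue estimates'' on regular semisimple elements and branching rules for $\fA_n$, and you yourself flag this as ``the crux'' left undone --- but in the extremal configurations ($\SO_3$-type modules, where \emph{every} nontrivial element attains equality $\dim C_V(g)=(1/3)\dim V$) no eigenvalue estimate with any slack can exist, which is exactly why the generation-by-triples argument, not estimation, is needed. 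Two further steps of your reduction are also unsound as stated: in the imprimitive case, ``elements that move most blocks'' only gives $(1/2)\dim V$ when block-orbits of length $2$ occur (this is why Lemma~\ref{lem:even} of this paper needs \cite[Thm.~1]{FKS} and an averaging argument over a coset, and why \cite{GM1} devotes substantial work here); and your primitive-case dichotomy (symplectic-type normalizer versus almost simple) omits the tensor-decomposable case where $F^*(G)$ is a central product of several quasi-simple components, for which $V$ restricted to one component is a sum of tensor factors and the fixed-space control requires estimates of the kind in Lemma~\ref{lem:last}. So what you have is a partially correct reduction scheme with the decisive idea --- and the entire CFSG case analysis --- missing.
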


We note that this result depends upon the classification of finite
simple groups. However, the result had previously been unknown even
for solvable groups. 

The conclusion of the theorem no longer holds in non-coprime characteristic.
The simplest example is to take $G=A.C$ the semidirect product of an elementary
abelian group $A$ of order $8$ with the cyclic group of order $7$ acting
faithfully. Then $G$ has an irreducible
$7$-dimensional representation over any field of characteristic not $2$. In
particular, in characteristic $7$, we see that every nontrivial semisimple
element has a $3$-dimensional fixed space. More generally, if $p = 2^a-1$ is
a Mersenne prime, let $G=A.C$ with $A$ elementary abelian of order $p+1=2^a$
and $C$ of order $p$ acting faithfully on $A$. Then $G$ has an irreducible
representation of dimension $p$ in characteristic $p$  and every nontrivial
semisimple element has a fixed space of dimension $(p-1)/2$. Moreover,
taking direct products of this group with itself and the corresponding tensor
product of representations, one gets examples of arbitrarily large dimension.   

Even in characteristic $0$, the Example~6.5 in \cite{GM1} shows that one can
do no better than $(1/9) \dim V$ no matter how large the dimension of $V$.    
In this note, we will prove Calegari's inequality and show that one can do
better under various circumstances.

If we consider connected algebraic groups, we can obtain similar results that
do not depend upon the classification of finite simple groups. Since a compact real Lie group has only
semisimple elements and since the minimum value of $\dim C_V(g)$ is attained on
a nonempty open subset of $G$, we also obtain:

\begin{thm}   \label{thm:compact}
 Let $G\ne1$ be a connected compact real Lie subgroup of $\GL_n(\CC)=\GL(V)$.
 Assume that $C_V(G)=0$. Then the average dimension (with respect to Haar
 measure) of $C_V(g)$ is at most $(1/3) \dim V$.
\end{thm}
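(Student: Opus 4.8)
The plan is to reduce the statement about the Haar average to a statement about a single generic element, exploiting that every element of a compact group is semisimple, and then to bound that generic value by a weight--multiplicity estimate. First I would fix a maximal torus $T\le G$ and decompose $V=\bigoplus_\lambda V_\lambda$ into weight spaces for $T$. Since $G$ is compact every $g\in G$ is diagonalizable, so $\dim C_V(g)$ is exactly the multiplicity of the eigenvalue $1$ of $g$ on $V$; and since $G$ is connected, every $g$ is conjugate into $T$. The zero weight space $V_0$ is fixed pointwise by $T$, so $V_0\subseteq C_V(t)$ for every $t\in T$, whence $\dim C_V(g)\ge\dim V_0$ for all $g\in G$; moreover, for $t$ lying outside the union $\bigcup_{\lambda\ne0}\{t\in T:\lambda(t)=1\}$ of proper closed subgroups one has $C_V(t)=V_0$, so equality holds there. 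Thus the generic, and minimal, value of $\dim C_V$ on $G$ equals $\dim V_0$, attained on a dense open subset, exactly as indicated in the introduction.

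The second step is to see that this minimal value is the Haar average. The locus $\{g\in G:\dim C_V(g)>\dim V_0\}$ is the union, over the finitely many nonzero weights $\lambda$, of the $G$-conjugates of the proper closed subgroups $\{t\in T:\lambda(t)=1\}$; each of these has positive codimension, and conjugating does not fill up $G$ (the relevant map $G\times\{t:\lambda(t)=1\}\to G$ has image of dimension $<\dim G$, since centralizers of torus elements contain $T$). Hence the bad locus is a proper real subvariety and is Haar-null, so $\dim C_V(g)=\dim V_0$ for almost every $g$ and $\int_G\dim C_V(g)\,d\mu(g)=\dim V_0$. The Weyl integration formula yields the same conclusion, as the non-generic set is already null for the induced measure on $T$.

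It therefore remains to prove $\dim V_0\le\frac13\dim V$. Here I would first reduce to the irreducible case: writing $V=\bigoplus_i V_i$ with each $V_i$ irreducible and, since $C_V(G)=0$, nontrivial, the zero weight multiplicities add, so it suffices to bound $\dim (V_i)_0\le\frac13\dim V_i$ for each $i$. This last inequality --- that a nontrivial irreducible representation of a connected compact group (equivalently, of the associated connected reductive complex group) has zero weight multiplicity at most one third of its dimension --- is the main obstacle, and is precisely where the analogue of Theorem~\ref{thm:gm1} for connected groups is required. I expect the bound to be sharp: the case $G=\SO_3(\RR)$ acting on its $3$-dimensional representation has zero weight multiplicity $1$ out of $3$, which accounts for the constant $\frac13$ and matches the extremal example of the introduction.
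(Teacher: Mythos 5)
Your first two steps are sound: since every element of a compact group is semisimple and every element of a connected compact group is conjugate into a fixed maximal torus $T$, the function $g\mapsto\dim C_V(g)$ attains its minimum, namely the zero weight multiplicity $\dim V_0$, on a conull open subset of $G$, so the Haar average equals $\dim V_0$. This is exactly the one-sentence reduction the paper itself makes just before stating Theorem~\ref{thm:compact}. The genuine gap is your third step: the inequality $\dim V_0\le(1/3)\dim V$ for every nontrivial irreducible representation of a connected compact group is not an off-the-shelf fact you can quote --- it \emph{is} the content of Theorem~\ref{thm:compact}, and there is no ``analogue of Theorem~\ref{thm:gm1} for connected groups'' available to cite independently of the present paper (Theorem~\ref{thm:gm1} concerns finite groups in coprime characteristic and does not imply the connected statement). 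As written, your proposal ends by restating the theorem in weight-theoretic form rather than proving it.

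The paper closes precisely this gap via Theorem~\ref{thm:algebraic}, and the argument is short. Let $\Gamma$ be the Zariski closure of $G$ in $\GL(V)$; it is connected, and reductive since $V$ is completely reducible. By \cite[Thm.~3.3]{Gur98}, for each irreducible summand $S$ of $V$ the pairs of semisimple elements of $\Gamma$ generating a subgroup acting irreducibly on $S$ form a nonempty open subvariety of $\Gamma^2$; intersecting over the finitely many summands, and then with the Zariski dense subset $G^2$, one obtains $x,y\in G$ such that $H=\langle x,y\rangle$ satisfies $C_V(H)=C_{V^*}(H)=0$ (for compact $G$ the semisimplicity of $x$, $y$, $xy$ is automatic). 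Scott's Lemma \cite{scott} then gives $\dim C_V(x)+\dim C_V(y)+\dim C_V(xy)\le\dim V+\dim C_V(H)+\dim C_{V^*}(H)=\dim V$, so one of the three terms is at most $(1/3)\dim V$, and hence the minimum --- in your formulation, $\dim V_0$ --- is at most $(1/3)\dim V$. If you wish to keep your weight-theoretic framing you must still supply an argument of this kind (or an independent proof of the zero-weight bound for nontrivial irreducibles of connected reductive groups); until then the crucial inequality is asserted, not proved.
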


Let $\epsilon > 0$. It was shown in \cite[Thm. 6]{GLM} that if $G$ is compact
and connected
and $V$ is irreducible, then in fact the average dimension of $C_V(g)$ is less
than $\epsilon \dim V$ as long as $\dim V$ is sufficiently large.   

We have a similar result for Zariski dense subgroups of connected algebraic
groups.

\begin{thm}   \label{thm:algebraic}
 Let $G\ne1$ be a subgroup of $\GL_n(k)=\GL(V)$ with $k$ an algebraically
 closed field and $\Char k = p \ge 0$. Assume that $V$ is completely reducible,
 $C_V(G)=0$ and the Zariski closure of $G$ is connected. Then the set of
 semisimple $g \in G$ with $\dim C_V(g) \le (1/3) \dim V$ is open
 and Zariski dense in~$G$.
\end{thm}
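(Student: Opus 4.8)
The plan is to replace $G$ by its Zariski closure $H=\overline{G}$, use the structure theory of connected reductive groups to reduce everything to a single inequality about zero weight spaces, and then transport the resulting open dense subset of $H$ back to $G$ by density. First I would record that a subspace $W\le V$ is $G$-invariant if and only if it is $H$-invariant, since the stabilizer of $W$ in $\GL(V)$ is Zariski closed and contains $G$, hence contains $H$. Therefore $V$ is a completely reducible $H$-module with the same submodule lattice, it is faithful, and $C_V(H)=C_V(G)=0$. The fixed space $C_V(R_u(H))$ of the connected normal unipotent subgroup $R_u(H)$ is nonzero (a unipotent group always fixes a nonzero vector) and is an $H$-submodule; if it were proper, its $H$-complement would again contain $R_u(H)$-fixed vectors, which is absurd. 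Hence $R_u(H)$ acts trivially and, by faithfulness, $R_u(H)=1$. Thus $H$ is a \emph{connected reductive} group acting faithfully and completely reducibly on $V$ with $C_V(H)=0$.

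Next I would reduce the theorem to a pure existence statement. Since $\dim C_V(g)=n-\operatorname{rank}(g-1)$ and the rank is lower semicontinuous, the set $U_1=\{g\in H:\dim C_V(g)\le\tfrac13\dim V\}$ is open in $H$. The set $U_2$ of regular semisimple elements of $H$ is open and dense. As $H$ is connected, hence irreducible as a variety, any nonempty open subset is dense; so once I exhibit a single regular semisimple $t\in H$ with $\dim C_V(t)\le\tfrac13\dim V$, the intersection $U_1\cap U_2$ is a nonempty open, hence dense, subset of $H$. Finally, because $G$ is Zariski dense in $H$, the trace $G\cap(U_1\cap U_2)$ is open in $G$ and meets every nonempty open subset of $G$, so it is dense in $G$; every element of it is semisimple with fixed space of dimension at most $\tfrac13\dim V$. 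This proves the theorem modulo the existence of $t$.

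To produce $t$, fix a maximal torus $T\le H$ and decompose $V=\bigoplus_\lambda V_\lambda$ into $T$-weight spaces. Always $C_V(t)\supseteq V_0$, and for $t$ lying outside the finitely many proper subgroups $\ker\lambda$ (over the nonzero weights $\lambda$ of $V$) and outside the walls $\ker\alpha$ (over the roots $\alpha$) — a dense open condition on $T$ — the element $t$ is regular semisimple with $C_V(t)=V_0=C_V(T)$. Since every semisimple element lies in a conjugate of $T$ and $\dim C_V$ is conjugation invariant, the minimum of $\dim C_V$ over semisimple elements is exactly $\dim C_V(T)$, so it suffices to show $\dim C_V(T)\le\tfrac13\dim V$. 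Writing $V=\bigoplus_i V_i$ with each $V_i$ irreducible and nontrivial (nontriviality follows from $C_V(H)=0$), we have $C_V(T)=\bigoplus_i C_{V_i}(T)$, so it is enough to treat each summand. If the connected center $Z=Z(H)^\circ$ acts on $V_i$ by a nontrivial character, then every $T$-weight of $V_i$ is nonzero and $C_{V_i}(T)=0$; otherwise $V_i$ is a nontrivial irreducible module for the semisimple group $[H,H]$ and $C_{V_i}(T)$ is its zero weight space. Everything thus comes down to the inequality
\[
 \dim V_0\le\tfrac13\dim V
\]
for a nontrivial irreducible module $V$ of a connected semisimple group, with $V_0$ its zero weight space.

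The main obstacle is precisely this last inequality, which is sharp: $\SO_3$ on $k^3$ (equivalently the adjoint module of $\SL_2$) attains the value $\tfrac13$. I would reduce it to simple groups via $V=V_1\boxtimes\cdots\boxtimes V_m$, where $\dim V_0/\dim V=\prod_j\dim(V_j)_0/\dim V_j$ and some factor is nontrivial, so the simple-group bound for that factor suffices. For a simple group the weights fall into $W$-orbits of constant multiplicity with the zero weight a single $W$-fixed point, and the inequality becomes the assertion that the nonzero weights carry at least twice the multiplicity of the zero weight. Verifying this uniformly — in particular controlling zero weight multiplicities of irreducible (as opposed to Weyl) modules in small positive characteristic — is the delicate point. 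In characteristic $0$ one can bypass the weight combinatorics entirely: applying Theorem~\ref{thm:compact} to a (Zariski dense, connected) maximal compact subgroup of $H$, whose elements are all semisimple, already yields an element with $\dim C_V\le\tfrac13\dim V$, hence the existence of $t$. So the genuine work lies in the positive characteristic case.
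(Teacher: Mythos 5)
Your reduction is clean as far as it goes --- the passage to the reductive closure $H$, the semicontinuity argument, the identification of the generic semisimple fixed space with the zero weight space $V_0$ of a maximal torus, and the factorization $\dim V_0/\dim V=\prod_j \dim (V_j)_0/\dim V_j$ for external tensor products are all correct. But the proposal has a genuine gap, and it is exactly at the heart of the matter: the inequality $\dim V_0\le (1/3)\dim V$ for a nontrivial irreducible module of a connected semisimple group is never proved. Worse, your reduction is essentially a reformulation rather than a reduction: since the minimum of $\dim C_V(g)$ over semisimple $g$ \emph{equals} $\dim C_V(T)=\dim V_0$, the zero weight space inequality is logically equivalent to the theorem itself (applied to $G=\Gamma$), so no ground has been gained --- the ``delicate point'' you flag at the end is the entire content of the statement. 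Your characteristic~$0$ escape route is also not available: in this paper Theorem~\ref{thm:compact} is \emph{deduced} from Theorem~\ref{thm:algebraic} (the text reads ``Theorem~\ref{thm:compact} now follows immediately''), so invoking it to prove the existence of $t$ in characteristic~$0$ is circular. One could repair that case by an independent averaging argument for compact groups, but the positive characteristic case --- where zero weight multiplicities of irreducible (rather than Weyl) modules are poorly controlled --- would remain open.

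The paper's proof avoids weight multiplicities entirely and works uniformly in all characteristics. It cites \cite[Thm.~3.3]{Gur98}: for each rational irreducible $k\Gamma$-module $S$, the pairs of semisimple elements of $\Gamma$ generating a subgroup irreducible on $S$ form a nonempty open subvariety of $\Gamma^2$. Intersecting over the (finitely many) irreducible summands of $V$ and with the open condition that $x$, $y$ and $xy$ be semisimple, one gets a nonempty open subset of $\Gamma^2$, which meets $G^2$ by density; for such a pair, $\langle x,y\rangle$ has no fixed points on $V$ or $V^*$, so Scott's Lemma \cite{scott} gives $\dim C_V(x)+\dim C_V(y)+\dim C_V(xy)\le\dim V$, and one of the three elements does the job. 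If you want to salvage your approach, you would need to prove the zero weight space bound directly (e.g.\ via principal $\SL_2$ or weight-string arguments, ruling out trivial summands through $V_0$), which is delicate precisely in small positive characteristic --- whereas the generation-plus-Scott route sidesteps the issue by never asking how large $V_0$ is.
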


An easy consequence of Theorem \ref{thm:algebraic} is:  

\begin{cor}   \label{cor:infinite}
 Let $G$ be an irreducible subgroup of $\GL_n(k)=\GL(V)$ with
 $\Char k = p \ge 0$. Assume that $G$ is infinite. Then there exists a
 semisimple $g \in G$ such that $\dim C_V(g)  \le (1/3) \dim V$.   
\end{cor}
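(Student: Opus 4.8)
The plan is to deduce the corollary from Theorem~\ref{thm:algebraic} by passing to the algebraic closure and then to the identity component of the Zariski closure of $G$. First I would extend scalars to $\overline{k}$: setting $\overline{V}=V\otimes_k\overline{k}$, both $\dim C_V(g)$ and the property of $g$ being semisimple are unchanged under this extension, so it suffices to produce a semisimple $g\in G\le\GL(\overline{V})$ with $\dim C_{\overline{V}}(g)\le\frac13\dim\overline{V}$. Since $G$ is infinite, its Zariski closure $H=\overline{G}\le\GL_n(\overline{k})$ is positive-dimensional, so the identity component $N=H^{\circ}$ is a nontrivial connected closed subgroup of finite index in $H$. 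I would then replace $G$ by $G_1:=G\cap N$, a normal subgroup of finite index in $G$ and hence infinite (so $G_1\ne1$). Its Zariski closure lies in the connected group $N$ yet still has finite index in $H$; since a connected algebraic group has no proper closed subgroup of finite index, this closure must equal $N$. Thus $G_1$ is Zariski dense in the connected group $N$, supplying the connectedness hypothesis of Theorem~\ref{thm:algebraic}.

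Because $G\ne1$ is irreducible, $V$ is a nontrivial irreducible $kG$-module, and I would use that $\overline{V}$ is a completely reducible $\overline{k}G$-module; by Clifford's theorem its restriction to the normal subgroup $G_1$ is again completely reducible, giving the complete-reducibility hypothesis. (In positive characteristic, if $\overline{V}$ fails to be semisimple because of inseparability, one replaces it by its semisimplification, which has the same fixed-space dimension on the semisimple elements we are after; I expect this is the point requiring the most care.) The remaining, and main, step is to verify $C_{\overline{V}}(G_1)=0$. Suppose not: since $\overline{V}$ is semisimple and $G_1\trianglelefteq G$, the space $C_{\overline{V}}(G_1)$ is a nonzero $G$-submodule, hence contains an irreducible $G$-constituent $S$ on which $G_1$, and therefore by Zariski density all of $N=H^{\circ}$, acts trivially.

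Here is where irreducibility of $V$ over $k$ enters decisively: the absolute Galois group of $k$ permutes the $\overline{k}$-constituents of $\overline{V}$ transitively, and since $H$ (and hence $H^{\circ}$) is defined over $k$, the property that $H^{\circ}$ acts trivially on a constituent is Galois-stable. Thus $H^{\circ}$ would act trivially on every constituent, and so on all of $\overline{V}$; as $H\le\GL(\overline{V})$ acts faithfully this forces $H^{\circ}=1$, contradicting that $G$ is infinite. Hence $C_{\overline{V}}(G_1)=0$, and all hypotheses of Theorem~\ref{thm:algebraic} hold for $G_1\le\GL(\overline{V})$. The theorem then produces a semisimple $g$ with $\dim C_{\overline{V}}(g)\le\frac13\dim\overline{V}$; viewed back in $G\le\GL_n(k)$, this $g$ is semisimple with $\dim C_V(g)=\dim C_{\overline{V}}(g)\le\frac13 n$, as required. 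I anticipate that the reduction to the connected, algebraically closed case is routine and that the verification of $C_{\overline{V}}(G_1)=0$ — together with the semisimplicity bookkeeping in the inseparable case — is the only substantive obstacle.
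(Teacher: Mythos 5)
Your proposal is essentially the paper's own proof: extend scalars to $\overline{k}$, pass to the identity component $\Gamma^{\circ}$ of the Zariski closure (noting that $G\cap\Gamma^{\circ}$ remains Zariski dense in $\Gamma^{\circ}$, which you justify correctly via the absence of proper closed finite-index subgroups of a connected group), verify complete reducibility via Clifford's theorem and the vanishing of the fixed space, and invoke Theorem~\ref{thm:algebraic}. The one point worth flagging is that your Galois-transitivity argument for $C_{\overline{V}}(G_1)=0$ is needlessly delicate (over imperfect $k$ the absolute Galois group only descends subspaces to the perfect closure, so transitivity on constituents is not automatic) and can be replaced by a one-line observation that also bypasses most of your semisimplification bookkeeping: fixed spaces commute with scalar extension, so $C_{\overline{V}}(G_1)=C_V(G_1)\otimes_k\overline{k}$ where $C_V(G_1)$ is a $kG$-submodule of the irreducible module $V$ (as $G_1\trianglelefteq G$), and if it were nonzero it would equal $V$, forcing $G_1=1$ by faithfulness, contradicting that $G_1$ is infinite.
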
 

Thus, we are reduced to considering finite groups. 
For the general case, we can show:

\begin{thm}  \label{thm:general}
 Let $1\ne G \le \GL_n(k)=\GL(V)$ with $\Char k=p\ge 0$. Assume that $G$ acts
 irreducibly on $V$.
 \begin{enumerate}
  \item[\rm(a)] There exists a semisimple $g\in G$ with
   $\dim C_V(g)< (1/2)\dim V$.
  \item[\rm(b)] If $p > n+2$ or $p=0$, then there exists a semisimple $g\in G$ 
   with $\dim C_V(g) \le (1/3) \dim V$.
   \item[\rm(c)]  If $p$ does not divide $n$, then 
   there exists a semisimple $g \in G$ with
   $\dim C_V(g) \le (3/8) \dim V$.
  \item[{\rm(d)}]  If $n$ is prime and $2$ is a multiplicative generator
   modulo $n$, then there exists a semisimple $g \in G$ with
   $\dim C_V(g) \le (1/3) \dim V$.
   
 \end{enumerate}
\end{thm}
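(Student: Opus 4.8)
The plan is to peel off the cases already covered by the introduction and isolate the genuinely new content, which is the modular case. If $G$ is infinite, Corollary~\ref{cor:infinite} produces a semisimple $g$ with $\dim C_V(g)\le(1/3)\dim V$, which implies every one of (a)--(d); so I may assume $G$ is finite. If moreover $p\nmid|G|$ (in particular if $p=0$), then Theorem~\ref{thm:gm1} gives the $(1/3)$-bound directly, again implying all four conclusions. Thus the whole of the argument lies in the modular case $p>0$ with $p\mid|G|$, and it is exactly here that the weaker ratio $1/2$ of part~(a) becomes unavoidable in general, while the side conditions in (b)--(d) are what one needs to recover $1/3$ or $3/8$.

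Next I would set up the averaging machinery. For a semisimple (equivalently $p$-regular) element $g$ of order $m$ with Brauer character $\phi$ of $V$, the multiplicity of the eigenvalue $1$ is
\[
\dim C_V(g)=\frac{1}{m}\sum_{j=0}^{m-1}\phi(g^j),
\]
so bounding fixed spaces amounts to finding a $p$-regular cyclic subgroup on which $\phi$ averages below $n/2$, $n/3$, or $3n/8$. For part~(c) the crucial structural input is that when $p\nmid n=\dim V$ the trace form is nondegenerate and $\operatorname{End}(V)=k\cdot\mathrm{Id}\oplus\mathfrak{sl}(V)$ splits as a $kG$-module; writing $a_\zeta$ for the eigenvalue multiplicities of a semisimple $g$ one has $\sum_\zeta a_\zeta^2=1+\dim C_{\mathfrak{sl}(V)}(g)$, and I would couple this identity with an averaging estimate over a suitable set of $p$-regular elements to force some $g$ with $a_1$ small, the extremal analysis producing the constant $3/8$. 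For part~(b), when $p>n+2$ the $p$-local structure is rigid---unipotent elements have order exactly $p$ and act with Jordan blocks of size at most $n<p$---so the situation closely mimics characteristic $0$ and the coprime bound can be recovered. For part~(d) the hypothesis that $2$ is a primitive root modulo the prime $n$ controls the factorization of $T^n-1$ over $\mathbb{F}_2$ (equivalently, makes $\Phi_n$ irreducible there) and hence the cycle structure of $2$-elements and the possible eigenvalue patterns, excluding the obstruction for $n\ge 7$ while leaving only the harmless case $n=3$.

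The reduction to a quasi-simple building block would then proceed by Clifford theory. If $V$ is induced from a proper subgroup (imprimitive), $g$ permutes the blocks, and $\dim C_V(g)$ is governed by the cycle structure of this permutation together with the scalars accumulated around each cycle; the worst case---many $2$-cycles with trivial scalar---produces precisely the ratio $1/2$ and is realized by the monomial Mersenne-prime groups $A.C$, in which $p\mid n$. I would show this block analysis always yields part~(a), and that the hypotheses of (b) and (c), which force $p\nmid n$, together with the arithmetic restriction in (d), rule out the extremal cycle configurations and restore the stronger ratios. Tensor-decomposable and tensor-induced modules are handled by choosing an element nontrivial in a single factor, so that the ratio is inherited from a smaller case. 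This leaves the primitive, tensor-indecomposable case, where $F^*(G)$ modulo scalars is quasi-simple (or of extraspecial-normalizer type), and I would invoke the classification of finite simple groups together with the known description of how semisimple elements act in the low-dimensional irreducible modules of quasi-simple groups, locating a regular semisimple element or one whose $1$-eigenspace is provably below the target fraction.

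The hard part will be the quasi-simple analysis in defining characteristic with $p\mid|G|$: unlike the coprime situation of Theorem~\ref{thm:gm1}, there is no orthogonality to fall back on, the trivial Brauer constituent may occur with multiplicity in $V\otimes V^*$, and the sharp constants $3/8$ and $1/3$ must be read off from case-by-case information on the eigenvalue multiplicities of semisimple classes in the relevant families. I expect the most delicate points to be the extremal argument underlying the $3/8$ bound of part~(c) and the verification that, once the arithmetic hypotheses of (b) and (d) are imposed, no sporadic low-dimensional representation of a quasi-simple group violates the stated ratios.
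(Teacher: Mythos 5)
Your opening reductions coincide with the paper's (infinite groups via Corollary~\ref{cor:infinite}, the coprime case via Theorem~\ref{thm:gm1}), but from that point on the proposal has a genuine gap: it never supplies the mechanism that actually produces the constants. The paper does not run an Aschbacher/Clifford-style trichotomy (imprimitive, tensor, primitive with quasi-simple $F^*(G)$) as you propose; it takes a minimal normal subgroup $N$ and splits on its structure. If $|N|$ is odd, it is an $r$-group for an odd prime $r\ne p$ and the Guralnick--Mar\'oti averaging bound \cite[Cor.~1.3]{GMa} already gives a $p'$-element with $\dim C_V(g)<(1/r)\dim V\le(1/3)\dim V$. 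If $N$ is an elementary abelian $2$-group, \cite{GMa} gives the $<(1/2)$ bound of (a), and when $p\nmid n$ one uses \cite[Thm.~1]{FKS} to find an element of prime power order $r^a$ with no fixed points on the $N$-eigenspace blocks (here $p\nmid n$ forces $r\ne p$), plus a coset-averaging argument when $r=2$, to recover $\le(1/3)$. This is the \emph{only} place the hypothesis $p\nmid n$ of part (c) enters --- not the nondegeneracy of the trace form or a splitting of $\operatorname{End}(V)$, as you suggest. Finally, when $N=L\times\cdots\times L$ with $L$ simple, the decisive tool is Theorem~\ref{thm:generationresult}: $L=\langle x,y\rangle$ with $x,y,z$ all $p'$-elements and $xyz=1$. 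When some tensor factor $U_i$ of an irreducible $kN$-summand has $\dim U_i\ge4$, one gets $\dim C_V(G)\le(1/16)\dim V$ and likewise for $V^*$, so Scott's Lemma \cite{scott} gives $\dim C_V(x_1)+\dim C_V(y_1)+\dim C_V(z_1)\le(9/8)\dim V$ --- this, and nothing else, is where $3/8$ comes from. Your proposed route to $3/8$ via $\sum_\zeta a_\zeta^2=1+\dim C_{\mathfrak{sl}(V)}(g)$ and an unspecified ``extremal analysis'' has no argument behind it, and averaging over cyclic subgroups cannot by itself isolate a single semisimple element with the required bound in the modular case.

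The same problem recurs in (b) and (d). For (b), ``$p>n+2$ makes the situation mimic characteristic $0$'' is not a proof; the essential input is \cite[Thm.~B]{Gur99}, which shows that if $p>\dim V+2$ and $p$ divides $|N|$ for a nonabelian minimal normal $N$, then $L$ is of Lie type in characteristic $p$ (or $N=J_1$, $p=11$, $n=7$, handled by an element of order $19$), after which Lemma~\ref{lem:natural} (i.e.\ \cite[Cor.~5.7]{GM1}) supplies a semisimple element with $\dim C_V(g)\le(1/3)\dim V$. Your sketch omits this classification-dependent step entirely, and without it the reduction does not close. For (d), you correctly identify that the hypothesis makes $\Phi_n$ irreducible over $\mathbb{F}_2$, but you do not extract the bound: the paper's point is that a minimal noncentral normal $2$-subgroup must then have order exactly $2^{n-1}$, whence some $x\in N$ has $-x$ a reflection and so $\dim C_V(x)=1\le(1/3)\dim V$; the remaining configurations (odd abelian $N$, extraspecial $n$-group, quasi-simple $N$ via Corollary~\ref{cor:simple}) each give $\le(1/3)$ directly. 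In short, the proposal is a plausible outline whose every quantitative step --- the $3/8$, the $1/3$ under $p>n+2$, and the reflection argument for (d) --- is asserted rather than derived, and it misses the paper's central idea of combining $p'$-generating triples with product~$1$ and Scott's Lemma.
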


In particular, this shows that if $n \le 5$, there exists a semisimple
element $g\in G$ with $\dim C_V(g)  \le 1$. If $n$ is prime, one can prove
even stronger results:

\begin{thm} \label{thm:prime}
 Let $G$ be a finite irreducible subgroup of $\GL_n(k)$ with $n$ an odd prime.
 Assume that $\Char k = p  > 2n - 3$ (or $p=0$).
 There exists a semisimple element $x \in G$ with all eigenspaces of dimension
 at most $1$.
\end{thm}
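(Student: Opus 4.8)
The plan is to produce a \emph{regular} semisimple element of $G$, i.e. a semisimple $x$ whose $n$ eigenvalues on $V\otimes\bar k$ are pairwise distinct; such an $x$ is diagonalizable with distinct eigenvalues, so every eigenspace is at most one-dimensional, which is the assertion. First I would dispose of the imprimitive case. As $n$ is prime, imprimitivity forces a decomposition $V=V_1\oplus\cdots\oplus V_n$ into lines permuted transitively by $G$, so the image of $G$ in $\fS_n$ is transitive and hence, by Cauchy, contains an $n$-cycle. A preimage $x\in G$ of an $n$-cycle is a monomial matrix with characteristic polynomial $T^n-\delta$ for some $\delta\neq0$; since $p>2n-3\ge n$ forces $p\nmid n$, this polynomial is separable and $x$ is the desired element.

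From now on $V$ is primitive. Since $n$ is prime, $V\otimes\bar k$ admits no nontrivial tensor factorization, so the generalized Fitting subgroup satisfies exactly one of: (i) $F^*(G)=Z(G)R$ with $R\cong r^{1+2}$ extraspecial of symplectic type and $r=n$, or (ii) $F^*(G)=Z(G)L$ with $L$ quasisimple and $V|_L$ absolutely irreducible. Case (i) requires $p\neq n$, which holds; here $R\le G$ and a generator of a maximal abelian subgroup of $R$ acts on $V$ with $n$ distinct eigenvalues and order prime to $p$ (the $n$ distinct characters into which $V|_R$ splits), finishing this case. It remains to treat the quasisimple case (ii).

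For (ii) I would appeal to the classification of finite simple groups together with the known lists of irreducible representations of quasisimple groups of prime dimension $n$ in characteristic $p>2n-3$; the large-characteristic hypothesis keeps the surviving list short. Most families admit a direct construction of a regular semisimple $x$: on the natural module of $\SL_n(q)$ or $\SU_n(q)$ an element of a Coxeter (Singer) torus has irreducible, hence separable, characteristic polynomial; for alternating groups on a deleted permutation module a product of cycles of pairwise coprime lengths (chosen even) has $n$ distinct eigenvalues once $p$ is prime to those lengths; and in the cross-characteristic Weil and unipotent cases one selects an element of a maximal torus of order prime to $p$. The decisive case, which fixes the bound, is $L=\PSL_2(q)$ in defining characteristic $p$ acting on $\operatorname{Sym}^{n-1}$ of the natural module: the nonsplit-torus element $\operatorname{diag}(\xi,\xi^{-1})$ of order $q+1$ (prime to $p$) has eigenvalues $\xi^{\,n-1-2i}$, $0\le i\le n-1$, and these are pairwise distinct as soon as $q+1>2(n-1)$, i.e. $q>2n-3$, which is guaranteed by $q\ge p>2n-3$. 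The main obstacle is therefore the bookkeeping of case (ii): running through the representation-theoretic classification and exhibiting in each family a $p'$-element of $G$ with $n$ distinct eigenvalues, the $\PSL_2$ computation above simultaneously showing that the hypothesis $p>2n-3$ is the natural one.
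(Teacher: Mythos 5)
Your proposal is correct and follows essentially the same route as the paper: your monomial element with separable characteristic polynomial $T^n-\delta$ in the imprimitive case is Lemma~\ref{lem:prime-imp}, your extraspecial-versus-quasisimple dichotomy for primitive $V$ matches the discussion preceding Theorem~\ref{thm:primedim1}, and your $\SL_2(q)$ computation on $\mathrm{Sym}^{n-1}$ (eigenvalues $\xi^{\,n-1-2i}$ pairwise distinct once $q+1>2(n-1)$) is precisely the weight-collision estimate $2\dim V-2\ge q+1$ by which Theorem~\ref{thm:primedim1} identifies $p>2n-3$ as the true threshold. The only divergence is one of citation rather than substance: where you appeal to ``known lists'' in case (ii), the paper assembles the exact inputs --- \cite[Thm.~B]{Gur99} to force Lie type in characteristic $p$ (plus the $J_1$ exception) when $p\mid|G|$ and $p>n+2$, Gabber's classification (via \cite{jantzen} and \cite{katz}) of restricted prime-dimensional modules in defining characteristic, Dixon--Zalesskii \cite{DZ} when $p\nmid|G|$, and a separate short patch for the boundary case $n=3$, $\Char k=5$, where $p>n+2$ fails and one instead observes directly that every noncentral semisimple element of order greater than $2$ has distinct eigenvalues.
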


The paper is organized as follows. In the next section, we deal with algebraic
groups and deduce the various results on algebraic and infinite groups.
We then consider various generation results about finite simple
groups. In particular, in Section~\ref{sec:gen} we prove the following results
that may be of independent interest:
 
\begin{thm}   \label{thm:generationresult}
 Let $G$ be a finite nonabelian simple group and $p$ be a prime.
 Then unless $(G,p)=(\fA_5,5)$, there exist $p'$-elements $x,y,z \in G$
 with $xyz=1$ such that $G = \langle x, y \rangle$.
\end{thm}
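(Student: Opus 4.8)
The plan is to combine the classification of finite simple groups with a character-theoretic count of generating triples. Observe first that it suffices to produce a generating pair $x,y$ of $p'$-elements with $xy$ also a $p'$-element, and then to set $z=(xy)^{-1}$. If $p\nmid|G|$ there is nothing to restrict, since every finite simple group is generated by two elements; so throughout I may assume $p\mid|G|$. The basic device is the Frobenius formula: if $C_1,C_2,C_3$ are conjugacy classes with representatives $g_1,g_2,g_3$, then the number of triples $(x,y,z)\in C_1\times C_2\times C_3$ with $xyz=1$ equals
$$ \frac{|C_1|\,|C_2|\,|C_3|}{|G|}\sum_{\chi}\frac{\chi(g_1)\chi(g_2)\chi(g_3)}{\chi(1)}, $$
the sum running over the irreducible characters of $G$. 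Choosing the $C_i$ to consist of $p'$-elements and bounding the contribution of the nontrivial characters, I would show this count is positive and, moreover, exceeds the number of such triples lying in proper subgroups; the latter is controlled by fixed-point-ratio and subgroup-order estimates, summed over the maximal subgroups. Any triple not accounted for by proper subgroups then generates $G$.

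For the generic argument I would try to realise $(x,y,xy)$ as a $(2,3,r)$-triple whenever $p\ge5$: most nonabelian simple groups are $(2,3)$-generated, and one has enough freedom in the order $r=o(xy)$ to keep it prime to $p$. When $p\in\{2,3\}$ the orders $2$ and $3$ are no longer available, so I would instead work with classes of elements whose orders lie in a fixed small set disjoint from $\{p\}$ (for $p=2$ this amounts to generation by three elements of odd order). In all these cases the character sum above is dominated by its principal term once $|G|$ is large relative to the ratios $|\chi(g_i)|/\chi(1)$, and for classes of (almost) regular semisimple type these ratios are uniformly small.

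The case division then follows the classification. For the alternating groups $\fA_m$ I would exhibit explicit pairs of permutations whose cycle types, and that of their product, avoid $p$ and which generate by a transitivity and primitivity argument; the genuinely exceptional pair $(\fA_5,5)$ appears here, since the $5'$-classes of $\fA_5$ have element orders only in $\{2,3\}$ and no such triple generates. For the groups of Lie type in defining characteristic $p$, the $p'$-elements are exactly the semisimple ones, so I would use (almost) regular semisimple classes together with the character estimates and with known class-multiplication and generation results for groups of Lie type, reducing small ranks over small fields to direct checks. In cross characteristic, and for the sporadic groups, there are abundant $p'$-classes, and a finite verification using the known character tables completes the argument.

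The main obstacle is uniformity in the small cases: low-rank groups of Lie type over small fields, alternating groups of small degree, and the sporadic groups, where removing the $p$-singular classes can leave too few classes for the character sum to be decisive, so that the estimates must be supplemented by explicit computation. This is precisely the regime in which the single exception $(\fA_5,5)$ surfaces, and confirming that it is the only one is the delicate part of the proof.
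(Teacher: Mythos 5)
Your overall toolkit (Frobenius class-multiplication counts, fixed-point-ratio bounds summed over maximal subgroups, CFSG case division, explicit checks in small cases, and Gow-type control of the product class) is the right family of techniques, and your diagnosis of the exception $(\fA_5,5)$ --- only orders $2$ and $3$ are available among $5'$-elements, and the triangle groups $G(l,m,n)$ with $l,m,n\le3$ are solvable --- matches the paper exactly. But your generic device, realising $(x,y,xy)$ as a $(2,3,r)$-triple for all $p\ge5$, has a genuine gap. First, it fails outright for the Suzuki groups $\tw2B_2(q)$, an infinite family with no elements of order~$3$ at all (and groups such as $\PSp_4(2^a)$, $\PSp_4(3^a)$ are known not to be $(2,3)$-generated). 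Second, and more structurally, the character-sum estimate you invoke is only uniform for (almost) regular semisimple classes, as you yourself note; involution and order-$3$ classes are small classes with large character ratios $|\chi(g)|/\chi(1)$, so the principal term of the Frobenius sum does not dominate for $(2,3,r)$-triples without far more delicate analysis. These two halves of your plan are in tension: the classes that make your counting work are not the classes your generic step uses.

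The paper's organization sidesteps the prime-by-prime analysis altogether. By \cite[Thm.~1.1]{GM1} each simple $G$ already has a generating triple of conjugate elements of a single common order $N$; hence only the primes $p$ dividing $N$ remain, and for those the paper produces a \emph{second} generating triple whose element orders are coprime to $N$ --- largely by citing \cite{GM2}, whose subject is precisely the existence of two generating triples with coprime orders (Beauville structures), supplemented in the classical cases by regular semisimple classes of Zsigmondy-type orders $\Phi_n^*(q)$, analysis of the (few) maximal overgroups of such elements, and Gow's theorem \cite{Gow}, which for a generating pair of regular semisimple classes lets one place the product $xy$ in any prescribed noncentral semisimple class. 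For any prime $p$, at least one of the two triples then consists of $p'$-elements. If you replace your $(2,3,r)$ step by this ``two coprime triples'' dichotomy, or restrict throughout to regular semisimple classes where your ratio estimates are valid and Gow's theorem applies, your outline becomes essentially the paper's proof; as written, the generic step would fail.
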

 
\begin{thm}   \label{thm:gen2}
 Let $G$ be a finite nonabelian simple group and $p$ be a prime.
 Then   there exist a pair of conjugate $p'$-elements that
 generate $G$.
\end{thm}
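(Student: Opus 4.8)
The plan is to reduce Theorem~\ref{thm:gen2} to a single statement about one conjugacy class and then attack that statement by a fixed-point-ratio estimate, dividing into cases along the classification. Concretely, it suffices to exhibit one conjugacy class $C$ of $p'$-elements of $G$ that contains a generating pair, i.e.\ a pair $(a,b)\in C\times C$ with $\langle a,b\rangle=G$; since any two elements of a single class are conjugate, such a pair is exactly a pair of conjugate $p'$-generators. Note that Theorem~\ref{thm:generationresult} already hands us a generating pair of $p'$-elements (the elements $x,y$ there), so the genuinely new requirement is that the two generators be conjugate, and the point will be to run a counting argument \emph{inside} a single fixed $p'$-class rather than across two possibly different classes.

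First I would set up the standard probabilistic bound. Fix a class $C$ of $p'$-elements and pick $a,b\in C$ uniformly at random. A non-generating pair is contained in some maximal subgroup, so
\[
  \Pr\bigl(\langle a,b\rangle\neq G\bigr)\ \le\ \sum_{M}\ |G:M|\,\Bigl(\tfrac{|C\cap M|}{|C|}\Bigr)^{2},
\]
where $M$ runs over representatives of the conjugacy classes of maximal subgroups of $G$ and $|C\cap M|/|C|=\mathrm{fpr}(a,G/M)$ is the fixed-point ratio of $a$ on the coset space $G/M$. If the right-hand side is $<1$ for some choice of $C$, then a generating pair in $C\times C$ exists and we are done. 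Everything therefore comes down to choosing, for each $G$ and each prime $p$, a $p'$-class $C$ whose elements lie in few maximal subgroups and have small fixed-point ratios.

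The choice of $C$ is made according to the family of $G$ via the classification. For a group of Lie type in defining characteristic $p$, the $p'$-elements are precisely the semisimple ones, and I would take $C$ to consist of regular semisimple elements generating a maximal torus of Singer or Coxeter type: such tori are cyclic and self-centralizing, their elements lie in very few maximal subgroups, and the requisite bounds are available from the Liebeck--Shalev--Guralnick--L\"ubeck circle of fixed-point-ratio estimates, making the displayed sum small. When $p$ is not the defining characteristic the constraint ``$p'$'' is mild and one has even more room to select a regular semisimple class. For alternating groups $\fA_n$ I would take $C$ to be an explicit class of $p'$-cycle type (for instance an $n$- or $(n-1)$-cycle, or a product of two cycles of coprime lengths), using primitivity together with Jordan's theorem to force generation, after checking that the order is prime to $p$. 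The sporadic groups and the finitely many small groups of Lie type are handled from their known character tables and maximal-subgroup data.

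The main obstacle is uniformity in $p$: one must produce a suitable $p'$-class for \emph{every} prime $p$ at once, and the fixed-point-ratio bounds are weakest exactly for small-rank groups and for the torsion primes, where a large-order regular semisimple $p'$-class may fail to exist. These difficulties are confined to finitely many pairs $(G,p)$, which are settled by direct computation; in particular the pair $(\fA_5,5)$ excluded from Theorem~\ref{thm:generationresult} is dispatched here by observing that two suitably chosen $3$-cycles, forming a single $\fA_5$-class of $5'$-elements, already generate $\fA_5$.
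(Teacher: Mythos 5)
Your opening reduction is correct and is exactly the right reformulation: since two conjugate elements lie in a common class, the theorem is equivalent to finding a single class $C$ of $p'$-elements containing a generating pair, and your inequality
\[
  \Pr\bigl(\langle a,b\rangle\neq G\bigr)\ \le\ \sum_{M}\ |G:M|\,\Bigl(\tfrac{|C\cap M|}{|C|}\Bigr)^{2}
\]
is the standard and valid tool (with $N_G(M)=M$ since $G$ is simple). But your architecture is genuinely different from the paper's, which does \emph{not} argue prime-by-prime. The paper starts from \cite[Thm.~1.1]{GM1}, which already provides, for every simple $G$, one generating triple of \emph{conjugate} elements with product~$1$, and then produces, family by family, a second generating system containing a conjugate pair whose element orders are \emph{coprime} to those of the first (quoting \cite{GM2} and \cite{FG}, and using Gow's theorem \cite{Gow} to prescribe the product of a pair taken from a generating pair of regular semisimple classes). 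Since any prime $p$ divides at most one of two coprime orders, all primes are handled simultaneously by two precomputed witnesses per group; no per-prime selection of a class is ever needed. Where counting is required, the paper uses character-sum (structure-constant) estimates rather than your fixed-point-ratio sum; the two are cousins, with yours cleaner asymptotically and the paper's giving the explicit control needed in bounded rank.

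The genuine gap is your final paragraph. The cases your fpr estimate does not comfortably cover are not ``confined to finitely many pairs $(G,p)$'': they include entire infinite families, and precisely those where the paper works hardest. For $\OO_8^+(q)$ there is no Singer torus; the natural candidate elements of order dividing $(q^2+1)^2/d^2$ lie in $(\OO_4^-(q)\times\OO_4^-(q)).2^2$ and in triality-related subgroups, and even the authors do not verify a single-class bound uniformly there --- for part (a) they instead take two classes, of orders $\Phi_6^*(q)$ and $(q^4-1)/4$, prove generation for triples in $C_1\times C_1\times C_2$ and in $C_1\times C_2\times C_2$, and invoke coprimality of the two orders to cover every $p$. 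The same two-class coprimality device is needed for $\PSU_4(q)$ and $\PSU_6(q)$, again for all $q$. Likewise, for $\fA_n$ with $p=3\mid n$, your first-choice cycle types ($n$-, $(n-1)$-cycles) are ruled out by the $p'$ constraint, and the paper needs Bertram's theorem \cite{Be72} plus Wielandt's multiple-transitivity results to make two $(n-5)$-cycles work --- an infinite family requiring a bespoke argument, not a finite computation. So while your plan is viable in principle (and your handling of $(\fA_5,5)$ via two $3$-cycles is fine, as part (a) has no exception), the assertion that the hard cases reduce to a finite list settled by computation is unsupported; substantiating it amounts to redoing, in fpr language, essentially all of the case analysis that constitutes the paper's Section~\ref{sec:gen}.
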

 
Note that an immediate consequence of Theorem \ref{thm:generationresult}
and Scott's Lemma \cite{scott} is:

\begin{cor}   \label{cor:simple}
 Let $G$ be a finite nonabelian simple subgroup of $\GL_n(k)=\GL(V)$. Assume
 that $C_V(G)=C_{V^*}(G)=0$.   If $G = \fA_5$ and $\Char k =5$, assume
 further that $V$ has no trivial composition factors. 
 Then there exists a semisimple element $g \in G$
 with $\dim C_V(g) \le  (1/3) \dim V$.
\end{cor}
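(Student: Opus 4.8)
The plan is to combine Theorem~\ref{thm:generationresult} with Scott's Lemma, and to dispose of the excluded pair $(\fA_5,5)$ by a direct computation.

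Suppose first that $(G,\Char k)\ne(\fA_5,5)$. By Theorem~\ref{thm:generationresult} there are $p'$-elements $x,y,z\in G$ with $xyz=1$ and $G=\langle x,y\rangle$; since $z=(xy)^{-1}$ we also have $G=\langle x,y,z\rangle$. As $p=\Char k$ divides the order of none of these elements, each of $x,y,z$ is semisimple. Applying Scott's Lemma \cite{scott} to the relation $xyz=1$, the group $G=\langle x,y,z\rangle$, and the module $V$ gives
\[
 \sum_{g\in\{x,y,z\}}\bigl(\dim V-\dim C_V(g)\bigr)
   \ \ge\ \bigl(\dim V-\dim C_V(G)\bigr)+\bigl(\dim V-\dim C_{V^*}(G)\bigr)
   \ =\ 2\dim V,
\]
using $C_V(G)=C_{V^*}(G)=0$. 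Hence some $g\in\{x,y,z\}$ satisfies $\dim V-\dim C_V(g)\ge\tfrac23\dim V$, i.e.\ $\dim C_V(g)\le\tfrac13\dim V$; being semisimple, this $g$ is the required element.

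It remains to treat $G=\fA_5$ with $\Char k=5$, where by hypothesis $V$ has no trivial composition factors. Here I would abandon generation and simply produce a good semisimple element. After extending scalars to $\bar k$ (which changes neither $\dim C_V(g)$ nor the set of trivial composition factors), choose $g\in\fA_5$ of order $3$. Then $g$ is a $5'$-element, hence semisimple, and since $V$ restricted to $\langle g\rangle$ is semisimple by Maschke, the quantity $\dim C_V(g)$ may be read off the Brauer character of $g$ and is therefore additive over the composition factors of $V$. The nontrivial irreducible $\bar kG$-modules are the $3$-dimensional module and the $5$-dimensional Steinberg module; a Brauer-character computation (value $0$ on the $3$-dimensional module forces eigenvalues $1,\omega,\omega^2$, while value $-1$ on the Steinberg module forces eigenvalue multiplicities $1,2,2$) shows $\dim C_W(g)=1$ in both cases. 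Thus $\dim C_W(g)\le\tfrac13\dim W$ for every nontrivial composition factor $W$, and summing over the composition factors of $V$ yields $\dim C_V(g)\le\tfrac13\dim V$.

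The genuinely delicate point is this exceptional case: one must know the $5$-modular character theory of $\fA_5$ well enough to verify that an element of order $3$ meets the bound with equality on each nontrivial irreducible, and to see that the hypothesis ``no trivial composition factors'' is precisely what prevents the trivial module --- which $g$ fixes entirely --- from destroying the average. By contrast the main case is immediate once Scott's Lemma is invoked with both $V$ and $V^*$, the resulting factor $2$ being exactly what produces the bound $\tfrac13$ after dividing among three elements.
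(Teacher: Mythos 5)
Your proposal is correct and follows the paper's own route: the main case is exactly the paper's argument, namely Theorem~\ref{thm:generationresult} combined with Scott's Lemma applied to $V$ and $V^*$ (with the harmless convention that in characteristic $0$ every element is semisimple, so any prime works there). For the excluded pair $(\fA_5,5)$ the paper merely asserts the result implicitly (elsewhere it says ``by inspection''), and your Brauer-character computation for an element of order $3$ --- eigenvalue multiplicities $(1,1,1)$ on the $3$-dimensional module and $(1,2,2)$ on the Steinberg module, with additivity over composition factors and the ``no trivial composition factors'' hypothesis doing exactly the work you identify --- is a correct filling-in of that inspection.
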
 

It is shown \cite[Thm.~6.1]{GM1} that in fact if $\epsilon>0$, $G$ is a
nonabelian finite simple group and $V$ is an irreducible $\CC G$-module,
then there exists $g \in G$ with $\dim C_V(g) < \epsilon \dim V$ as long as
$\dim V$ is sufficiently large (or equivalently $|G|$ is sufficiently large).   
This should be true for any algebraically closed field.  

In Section~\ref{sec:finite}, we consider finite groups and prove
Theorem~\ref{thm:general}(a)--(c). We then consider representations of prime
dimension, complete the proof of Theorem~\ref{thm:general} and prove
Theorem~\ref{thm:prime}. In the final
section, we give some examples relating to the divisibility of characteristic 
polynomials of representations (a question asked by Calegari \cite{CG}).  
 
\section{Infinite  Groups}  \label{sec:infinite} 
 
In this section we prove Theorems~\ref{thm:compact}, \ref{thm:algebraic}
and Corollary~\ref{cor:infinite}. These results do not require the
classification of finite simple groups.

We first prove Theorem \ref{thm:algebraic}. Let $k$ be an algebraically
closed field of characteristic $p \ge 0$. Let
$G$ be a subgroup of $\GL_n(k)=\GL(V)$. We assume that $G$ has no trivial
composition factors on $V$ and that $\Gamma$, the Zariski closure of $G$
is connected.

Our assumption is that $G$ (equivalently $\Gamma$) acts completely reducibly
on $V$. Thus, $\Gamma$ is reductive. Note that for any $e\ge0$,
$\{g \in \Gamma \mid \dim C_V(g) \le e\}$
is an open subvariety of $\Gamma$.  Moreover, the set of semisimple
elements of $\Gamma$ is also open.  
 
Let $S$ be a rational irreducible $k\Gamma$-module. If follows by
\cite[Thm.~3.3]{Gur98} that the set of pairs of semisimple elements
in $\Gamma$
which generate an irreducible subgroup on $S$ is a nontrivial open
subvariety of $\Gamma^2$.  Thus, the set of pairs of semisimple elements
in $\Gamma$ which have no fixed points on $V$ and whose product is semisimple
is also an open nonempty subvariety of $\Gamma^2$.  
Thus, this set intersects $G^2$ in a nonempty open subset of $G^2$.
Choose $x,y \in G$ such that $\langle x, y \rangle$ has no fixed points on $V$
with $x,y$ and $xy$ semisimple. 

By Scott's Lemma \cite{scott}, $\dim C_V(x)+\dim C_V(y)+\dim C_V(xy)\le\dim V$
and so some semisimple element $g\in G$ satisfies $\dim C_V(g)\le (1/3)\dim V$.
This shows that the set of semisimple elements of $\Gamma$ with
$\dim C_V(g) \le (1/3) \dim V$ is an open dense subvariety of $\Gamma$.
In particular, this set must intersect $G$ whence Theorem \ref{thm:algebraic}
holds. Theorem \ref{thm:compact} now follows immediately.  

We now prove Corollary \ref{cor:infinite}. Arguing as in \cite[Thm.~5.8]{GM1},
it suffices to work over an algebraically closed field. Let $\Gamma$ be the
Zariski closure of $G$ and $\Gamma^\circ$ the connected component of the
identity in $\Gamma$. Note that $\Gamma^\circ\ne1$ as $G$ is infinite.
Since $V$ is irreducible for $G$, hence for $\Gamma$, $\Gamma^{\circ}$ acts
completely reducibly without fixed points on $V$. By
Theorem~\ref{thm:algebraic}, the set of semisimple $g \in \Gamma^\circ$
with $\dim C_V(g) \le (1/3) \dim V$ contains a dense open subset of
$\Gamma^\circ$
and therefore intersects $G \cap \Gamma^\circ$ non-trivially, as required.

We close this section by showing that often one can do even better in
the case of algebraic groups. There is a version of the following theorem
for semisimple groups as well.

\begin{thm}   \label{thm:allss}
 Let $G$ be a simple simply connected algebraic group of rank $r$ at least $2$
 over an algebraically closed field $k$. Let $V$ be a completely reducible
 rational $kG$-module with $C_V(G)=0$. Let $g\in G$ be a regular semisimple
 element and assume that $g^3$ is not central (the latter can only fail if
 $G=\SL_3$). Then:
 \begin{enumerate}[\rm(a)]
  \item $\dim C_V(g) \le (1/3) \dim V$, and
  \item if $g^2$ is also regular, then every eigenspace of $g$ has dimension
   at most $(1/3) \dim V$.
 \end{enumerate}
\end{thm}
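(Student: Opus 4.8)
The plan is to derive both parts from Scott's Lemma \cite{scott} applied to a suitable triple of conjugates of the given element $g$. First I would arrange to work with a Zariski-dense subgroup: since $V$ is completely reducible and $C_V(G)=0$ (hence also $C_{V^*}(G)=0$, as the dual of a nontrivial irreducible is nontrivial), any Zariski-dense subgroup of $G$ has no nonzero fixed vectors on $V$ or on $V^*$. So it suffices to produce $x,y\in G$, both conjugate to $g$, such that $z:=(xy)^{-1}$ is again conjugate to $g$ and $\langle x,y\rangle$ is dense in $G$. Writing $C=g^G$ for the regular semisimple class of $g$, this is the statement that the subvariety $Z=\{(x,y)\in C\times C: xy\in C^{-1}\}$ meets the dense-generation locus. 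Granting one such pair, Scott's Lemma gives $\dim C_V(x)+\dim C_V(y)+\dim C_V(z)\le \dim V$, and since all three factors are conjugate to $g$ this reads $3\dim C_V(g)\le\dim V$, which is part~(a).

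The real work, and the step I expect to be the main obstacle, is the existence of such a pair. The centraliser of a regular semisimple element is a maximal torus, so $\dim C=\dim G-r$ while $C^{-1}$ has codimension $r$ in $G$; thus $\dim Z\ge 2(\dim G-r)-r$, and after quotienting by simultaneous conjugation the space of such triples has expected dimension $\dim G-3r$. This is positive exactly when $r\ge 2$ (it vanishes for $\SL_2$), so one expects irreducible, indeed Zariski-dense, triples to exist; here I would invoke the genericity results behind Theorem~\ref{thm:algebraic} and \cite[Thm.~3.3]{Gur98}, which say that the generic pair of semisimple elements generates a dense subgroup, and show that the dense-generation locus is not disjoint from $Z$. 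Making this rigorous and isolating the unique exceptional configuration is the crux: the count degenerates precisely for $G=\SL_3$ with $g$ having the three cube roots of unity as eigenvalues, i.e. exactly when $g^3$ is central, which is why that case is excluded.

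For part~(b) I would keep the same triple $x,y,z$ and use that Scott's Lemma is insensitive to scalars. If $\alpha\beta\gamma=1$, then $x'=\alpha^{-1}x$, $y'=\beta^{-1}y$, $z'=\gamma^{-1}z$ still satisfy $x'y'z'=1$ and have the same image in $\operatorname{PGL}(V)$ as $x,y,z$, so $\langle x',y'\rangle$ again has no fixed points on $V$ or $V^*$. Since $C_V(x')$ is the $\alpha$-eigenspace of $x$, and $x,y,z$ are conjugate to $g$, Scott's Lemma yields the family of inequalities
\[
 d_\alpha+d_\beta+d_\gamma\le\dim V\quad\text{whenever } \alpha\beta\gamma=1,
\]
where $d_\zeta$ denotes the dimension of the $\zeta$-eigenspace of $g$ on $V$. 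Taking $\alpha=\beta=\gamma=\zeta$ with $\zeta^3=1$ already gives $3d_\zeta\le\dim V$; allowing instead $xyz=z_0\in Z(G)$ extends this to every $\zeta$ whose cube lies in the group of scalars by which $Z(G)$ acts on $V$.

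The remaining, genuinely harder, point is to bound $d_\zeta$ for an \emph{arbitrary} eigenvalue $\zeta$, and this is exactly where the hypothesis that $g^2$ be regular must enter. The difficulty is structural: in $\hat G=\langle G,k^\times\rangle$ the element $\zeta^{-1}g$ is regular semisimple with $C_V(\zeta^{-1}g)$ equal to the $\zeta$-eigenspace of $g$, but its image in the central-torus quotient $\hat G/G$ forces any triple of conjugates with product $1$ to satisfy a cube condition on $\zeta$, so the naive argument only reaches the central coset above. I therefore expect (b) to require upgrading the displayed inequalities by a more careful, eigenvalue-adapted choice of generating triple, using that $g^2$ regular means $\alpha(g)\ne\pm1$ for every root $\alpha$; this finer separation of eigenvalues is what should keep the twisted triples irreducible and allow the bound to be pushed to every eigenspace. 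Pinning down this last step is the principal obstacle.
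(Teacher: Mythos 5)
Your part~(a) follows the paper's route: a triple of conjugates of $g$ with product $1$ generating a ``large'' subgroup, then Scott's Lemma. The step you flag as the crux --- that the triple variety $\{(x,y,z)\in C\times C\times C \mid xyz=1\}$ meets the good-generation locus --- is not something the paper rederives by a dimension count either; it is quoted directly from \cite[Thms.~6.11, 6.15]{GM2}, which assert that for a regular semisimple class $C$ this variety is irreducible of dimension $2\dim G-3r$ and that the triples generating a subgroup $H$ for which every irreducible submodule of $V$ stays irreducible (and non-isomorphic ones stay non-isomorphic) form a dense open subset; the hypotheses $r\ge2$ and $g^3$ noncentral are exactly the hypotheses of those theorems. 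So (a) is fine once you replace your appeal to \cite[Thm.~3.3]{Gur98} (which concerns pairs, and does not by itself locate generating pairs inside your subvariety $Z$) by this citation.

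For part~(b) there is a genuine gap, and you have correctly diagnosed it yourself: with all three elements in $C$, the scalar constraint $\alpha\beta\gamma=1$ with $\alpha=\beta=\gamma=\zeta$ only reaches $\zeta^3=1$ (or $\zeta^3$ in the central scalars), and no ``eigenvalue-adapted'' refinement of the generating triple within $C\times C\times C$ will fix this. The missing idea is to change the third \emph{conjugacy class}: the paper takes
$$Y=\{(x,y,z)\in C\times C\times C^{-2}\mid xyz=1\},$$
where $C^{-2}$ is the class of $g^{-2}$. The same genericity theorem applies to $Y$ --- and this is precisely where the hypothesis that $g^2$ be regular enters, namely to make $C^{-2}$ a regular semisimple class --- so one gets a triple $(x,y,z)\in Y$ whose span has the same irreducibles as $G$. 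Now for an arbitrary eigenvalue $\lambda$ of $g$, the scaled triple $(\lambda^{-1}x,\lambda^{-1}y,\lambda^2z)$ still has product $1$ because $\lambda^{-1}\cdot\lambda^{-1}\cdot\lambda^2=1$ identically in $\lambda$: the asymmetric class choice dissolves the cube condition. Scott's Lemma then gives $\dim C_V(\lambda^{-1}x)+\dim C_V(\lambda^{-1}y)+\dim C_V(\lambda^2z)\le\dim V$; the first two terms equal the dimension $d_\lambda$ of the $\lambda$-eigenspace of $g$, and the third is the dimension of the $\lambda^2$-eigenspace of $g^2$, which is at least $d_\lambda$, whence $3d_\lambda\le\dim V$ for every $\lambda$. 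Your instinct that $g^2$ regular should mean ``$\alpha(g)\ne\pm1$ for all roots $\alpha$'' and thereby refine the eigenvalue separation points in a different direction and does not lead to the bound; the actual role of the hypothesis is only to keep $C^{-2}$ within the scope of \cite[Thms.~6.11, 6.15]{GM2}.
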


\begin{proof}
Let $C$ be the conjugacy class of $g$. We first prove (a). Let $X$ be the
variety of triples of elements all in $C$ with product $1$. By
\cite[Thms.~6.11, 6.15]{GM2}, this is an irreducible variety (of dimension
$2 \dim G - 3r$) and the set of triples in $X$ which generate
a subgroup $H$ such that each irreducible submodule of $V$ remains
irreducible for $H$ (and non-isomorphic irreducibles remain non-isomorphic)
is a dense open subvariety of $X$. Now (a) follows by Scott's Lemma.
 
If $g^2$ is regular semisimple, we consider the variety
$$Y = \{(x,y,z) \in C \times C \times C^{-2} \mid xyz=1\}.$$
Precisely as above, we see that the subset of $Y$ consisting of triples so
that the subgroup they generate has the same collection of irreducibles as $G$
is dense. Apply Scott's Lemma to the elements
$(\lambda^{-1} x, \lambda^{-1} y, \lambda^2 z)$ to conclude
that the $\lambda$-eigenspace of at least one of them has dimension at most
$(1/3) \dim V$.
\end{proof}

Note that we do not need to assume that $V$ is completely reducible in the
previous result.  The proof goes through verbatim as long as we assume
that $C_V(G)=C_{V^*}(G)=0$.

\section{Generation results} \label{sec:gen}

The purpose of this section is the proof of the following generation results:

\begin{thm}   \label{thm:3elt}
 Let $G$ be a finite non-abelian simple group, $p$ a prime. Then we have:
 \begin{enumerate}
  \item[\rm(a)] $G$ is generated by two conjugate $p'$-elements, and
  \item[\rm(b)] $G$ is generated by three $p'$-elements $x,y,z$ with $xyz=1$
   unless $(G,p)=(\fA_5,5)$.
 \end{enumerate}
\end{thm}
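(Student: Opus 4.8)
The plan is to prove both statements by a uniform reduction to the classification of finite simple groups, treating the three families --- alternating, sporadic, and Lie type --- separately, and in each case translating the generation requirement into a counting problem governed by structure constants and fixed-point-ratio estimates. Throughout, the governing identity is that for conjugacy classes $C_1,C_2,C_3$ with representatives $g_i\in C_i$, the number of triples $(x,y,z)\in C_1\times C_2\times C_3$ with $xyz=1$ equals
$$\frac{|C_1|\,|C_2|\,|C_3|}{|G|}\sum_{\chi\in\mathrm{Irr}(G)}\frac{\chi(g_1)\chi(g_2)\chi(g_3)}{\chi(1)},$$
whose main term is the trivial-character contribution $|C_1||C_2||C_3|/|G|$; to prove part~(b) one subtracts off the triples lying in a proper (hence maximal) subgroup, a quantity controlled by $\sum_M |C_1\cap M|\,|C_2\cap M|$ summed over maximal subgroups $M$. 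For part~(a) the analogous count is applied with $C_1=C_2=C$ to the pairs $(x,y)\in C^2$: such a pair fails to generate only if both entries lie in a common maximal $M$, so the proportion of non-generating pairs is at most $\sum_M\bigl(|C\cap M|/|C|\bigr)^2$, a sum of squared fixed-point ratios.

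The heart of the argument is the choice of suitable $p'$-classes. For a group of Lie type in defining characteristic $\ell$ one must distinguish $p=\ell$, where the $p'$-elements are precisely the semisimple elements, from $p\neq\ell$, where unipotent classes and many mixed classes become available and the problem is easier. In the case $p=\ell$ I would select regular semisimple elements (automatically $p'$-elements) lying in a maximal torus of a convenient --- cyclic, Coxeter, or otherwise nondegenerate --- type; such elements lie in very few maximal subgroups, so the fixed-point ratios $|C\cap M|/|C|$ are small, and the Deligne--Lusztig character estimates together with the Liebeck--Shalev bounds on the number and indices of maximal subgroups force the error terms below the main term for all but finitely many groups. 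In the cross-characteristic case $p\neq\ell$ one further imposes that the chosen semisimple or mixed elements have order coprime to $p$; the extra freedom in selecting $C_1,C_2,C_3$ then makes structure-constant positivity and generation comparatively routine, so that existing $(2,3)$- and triangle-generation results may be invoked after verifying the $p'$ condition on the chosen classes.

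For alternating groups $\fA_n$ I would give explicit generating triples: a standard pair such as an $n$-cycle (or $(n-1)$-cycle) together with a short cycle, with the cycle types adjusted so that all three of $x$, $y$, and $(xy)^{-1}$ avoid elements of order divisible by $p$; part~(a) then follows by refining the classical existence of two conjugate generators to a $p'$-class. The sporadic groups, together with the finitely many small groups of Lie type and small-degree alternating groups left over from the asymptotic estimates, are settled by direct inspection of their character tables and maximal-subgroup lists, and it is exactly here that the single exception $(G,p)=(\fA_5,5)$ is isolated: when $p=5$ the only $5'$-element orders in $\fA_5$ are $1,2,3$, and a direct check shows that no triple of such elements with product~$1$ can generate $\fA_5$, whereas allowing order-$5$ elements (for every other $p$) immediately yields a generating triple.

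The main obstacle, where most of the work lies, is the uniform control of the error terms for groups of Lie type of unbounded rank: one must guarantee simultaneously across all families that a $p'$-class with sufficiently small fixed-point ratios exists and that the summed contribution of all maximal subgroups stays below the trivial-character main term, with the residual finite set of small cases handled computationally. A secondary but essential point is to confirm that $(\fA_5,5)$ is the \emph{only} genuine failure of part~(b) rather than one of several, which is resolved by the explicit small-group analysis above.
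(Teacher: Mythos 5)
Your outline is viable and is essentially the machinery that underlies the literature this paper cites, but it takes a genuinely different (and much more expensive) route than the paper's own proof, and you should be aware of the two devices that let the paper avoid almost all of your program. First, the paper does not attack each prime $p$ separately: since \cite[Thm.~1.1]{GM1} already provides, for every finite simple group, one generating triple of conjugate elements with product~$1$, it suffices to exhibit a \emph{single} second generating system per group whose element orders are coprime to those of the first; then for any prime $p$ at least one of the two systems automatically consists of $p'$-elements. This converts your universally-quantified-in-$p$ counting problem into the construction of one extra triple per group, mostly imported from \cite{GM2}. Second, where new triples are needed, the paper rarely runs the full three-class structure-constant estimate you propose; instead it finds a generating \emph{pair} of classes $(C,D)$ of regular semisimple elements (via maximal-overgroup analysis of Zsigmondy-type torus elements) and then invokes Gow's theorem \cite{Gow}, which places the product of some $(x,y)\in C\times D$ in any prescribed noncentral semisimple class --- uniformly in $q$. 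This matters because your claim that Deligne--Lusztig estimates plus Liebeck--Shalev bounds ``force the error terms below the main term for all but finitely many groups'' is delicate precisely for bounded rank and small $q$, where asymptotics give nothing and the paper instead uses Gow or explicit computation; your plan defers exactly this casework (it is carried out over many pages in \cite{GM1,GM2}, including irreducibility and subfield arguments you would have to reproduce). Your counting framework itself is sound --- indeed the paper runs exactly your pair/triple counts in a few residual cases such as $\OO_8^+(q)$ and small unitary groups --- and your isolation of $(\fA_5,5)$ is the same observation as the paper's (the $5'$-element orders in $\fA_5$ are at most $3$ and triangle groups with all exponents $\le 3$ are solvable); note only that you should record explicitly that part~(a) still holds for $(\fA_5,5)$, e.g.\ because two conjugate $3$-cycles generate $\fA_5$. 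Finally, for the alternating groups your ``adjust the cycle types'' step is where the real content sits: the paper needs Bertram's theorem on products of conjugate cycles plus Wielandt's multiple-transitivity results to handle $p=3$ dividing $n$, and a naive choice of $n$-cycle plus short cycle does not cover all residues of $n$ modulo small primes.
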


Note that $(\fA_5,5)$ is a true exception to the conclusion of
Theorem~\ref{thm:3elt}(b) since the largest element order of $\fA_5$ prime
to~5 is~3, and the triangle groups $G(l,m,n)$ with $l,m,n\le3$ are solvable.

In \cite[Thm.~1.1]{GM1} we showed that any finite non-abelian simple group
$G$ is generated by a certain triple of conjugate elements with product~1.
Thus the remaining task is to prove~(a) and~(b) for primes $p$ dividing the
common order of these elements. This will be shown in the subsequent
propositions. 

\begin{prop}
 Theorem~\ref{thm:3elt} holds for sporadic groups and the Tits group.
\end{prop}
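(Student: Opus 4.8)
The plan is to combine the reduction recorded in the preceding remark with explicit character-theoretic computations drawn from the \textsc{Atlas}. By \cite[Thm.~1.1]{GM1}, each of the $27$ groups under consideration (the $26$ sporadic groups and the Tits group) admits a generating triple of \emph{conjugate} elements with product~$1$; let $d$ be the common order of these elements. For every prime $p\nmid d$ this triple already consists of $p'$-elements, and it settles both parts at once: it gives~(b) immediately, and since its product is~$1$, writing $z=(xy)^{-1}\in\langle x,y\rangle$ shows $\langle x,y\rangle=G$ with $x,y$ conjugate, giving~(a). Hence for each group it remains only to treat the finitely many (typically one to four) primes $p$ dividing $d$, and these I would handle group by group.

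For a fixed group $G$ and a remaining prime $p$, the problem is to locate suitable conjugacy classes of $p'$-elements. For part~(b) I would search among triples $(C_1,C_2,C_3)$ of $p'$-classes for one with nonvanishing class structure constant
\[
  n(C_1,C_2,C_3)=\frac{|C_1|\,|C_2|}{|G|}\sum_{\chi\in\mathrm{Irr}(G)}\frac{\chi(g_1)\chi(g_2)\chi(g_3)}{\chi(1)},
\]
which counts the triples $(x,y,z)\in C_1\times C_2\times C_3$ with $xyz=1$ and $z$ fixed. To ensure that some such triple generates $G$ rather than lying in a proper subgroup, I would invoke the list of maximal subgroups in the \textsc{Atlas}: for each class of maximal subgroups $M$ one bounds, via the class fusion of $G$ into $M$, the number of triples contained in a conjugate of $M$; if $n(C_1,C_2,C_3)$ exceeds the sum of these contributions, a generating triple must exist. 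For part~(a) I would seek a single $p'$-class $C$ that is generating, i.e.\ such that for fixed $x\in C$ the number of $y\in C$ with $\langle x,y\rangle=G$ is positive; this holds once $|C|$ exceeds $\sum_M|C\cap M|$ taken over the maximal overgroups $M$ of $x$, and in most cases is already visible from the known spread computations for the sporadic groups. Observe that a generating triple drawn from a single class (the case $C_1=C_2=C_3=C$ in~(b)) simultaneously settles~(a).

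All of these computations are finite, and the required character tables, power maps and maximal-subgroup data are available in the \textsc{Atlas} and the \textsc{GAP} character table library, so each remaining pair $(G,p)$ reduces to a bounded verification. The main obstacle will be the small primes, above all $p=2$: here the $p'$-elements are precisely the odd-order elements, so only the (often few) odd-order classes are admissible and the structure-constant versus subgroup bookkeeping is tightest. The heaviest individual checks occur for the Baby Monster and the Monster, where the character tables are large and the maximal-subgroup fusion must be handled with care; but these data are known, so no case is genuinely out of reach. I expect no exceptions to arise, consistent with the fact that the unique exception $(\fA_5,5)$ in Theorem~\ref{thm:3elt}(b) is not among the sporadic groups.
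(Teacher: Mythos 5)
Your proposal is correct in method, and in fact it is essentially the method by which the data the paper relies on were originally produced; but the paper's own proof is a one-line citation rather than a computation. Specifically, the paper invokes \cite[Table~4]{GM2}, where for each sporadic group (and the Tits group) a \emph{single} second generating triple $(x,y,z)$ with product~$1$, with $x\sim y$, and with the orders of $x,y,z$ coprime to the orders occurring in the triple from \cite{GM1} was already exhibited. The coprimality is the key structural trick you did not use: since any prime $p$ can divide at most one of the two coprime order sets, one of the two triples automatically consists of $p'$-elements for \emph{every} prime $p$ simultaneously, settling (b), and since each triple contains a conjugate generating pair, (a) follows at once. Your plan instead fixes the group $G$ and treats each residual prime $p\mid d$ separately, searching for $p'$-classes with a structure constant $n(C_1,C_2,C_3)$ exceeding the total contribution from maximal subgroups; this is sound (your counting formula is the correct class-algebra constant for triples with $xyz=1$, and your observation that a triple from a single class settles (a) as well is exactly right), and it is the same character-theoretic machinery used in \cite{GM2}, but it is heavier, since it multiplies the casework by the number of bad primes and forgoes the economy of choosing one triple with coprime orders once and for all. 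The one caveat is that your argument is a verification program rather than a completed verification: you name no explicit classes and carry out none of the structure-constant or fusion estimates, so as written the proof rests on the assertion that the finite searches succeed --- which they do, as \cite[Table~4]{GM2} confirms, including for the Baby Monster and the Monster --- but a referee would ask you either to tabulate the witnesses or to cite the existing table, which is precisely what the paper does.
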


\begin{proof}
For $G$ a sporadic simple group both parts follow from \cite[Table~4]{GM2},
where we exhibited a second generating systems $(x,y,z)$ for $G$ with
product~1, with $x\sim y$ and the orders of $x,y,z$ prime to those from
\cite{GM1}.
\end{proof}

\begin{prop}   \label{prop:alter}
 Theorem~\ref{thm:3elt} holds for alternating groups.
\end{prop}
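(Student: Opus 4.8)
The plan is to treat $G=\fA_n$ for $n\ge5$ by exhibiting, for each prime $p$, an explicit triple of even $p'$-permutations with the required properties. By \cite[Thm.~1.1]{GM1} some conjugate triple of common order $m$ already generates $\fA_n$ with product~$1$; when $p\nmid m$ this single triple settles both parts at once (it has $x\sim y$ and product~$1$), so we may assume $p\mid m$, whence $p$ is small relative to $n$. Two elementary observations organize the construction: a permutation is a $p'$-element exactly when none of its cycle lengths is divisible by $p$, and every cycle of odd length is an even permutation, so to land inside $\fA_n$ (and never in all of $\fS_n$) it suffices to work with even permutations. The generation engine is Jordan's theorem: a primitive subgroup of $\fS_n$ containing a cycle of prime length fixing at least three points contains $\fA_n$.

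By Bertrand's postulate, for $n$ large there is a prime $\ell$ with $n/2<\ell\le n-3$ and $\ell\ne p$. I would let $x$ be an even $p'$-permutation whose cycle type consists of one $\ell$-cycle together with $p'$-cycles covering the remaining $m=n-\ell$ points (so $3\le m<n/2$). Since $\ell$ is prime and exceeds every other cycle length, a suitable power of $x$ (killing the short cycles) is an $\ell$-cycle fixing the $m\ge3$ remaining points. Choosing $y$ of the same cycle type, with its support arranged to overlap that of $x$ so that $\langle x,y\rangle$ is transitive, the group is then transitive and contains an $\ell$-cycle of prime length exceeding $n/2$ — which forces primitivity — fixing at least three points, and hence equals $\fA_n$ by Jordan. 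This settles part~(a) with the conjugate pair $x\sim y$; for part~(b) I would set $z=(xy)^{-1}$ and verify that its cycle type again avoids multiples of $p$, producing a product-$1$ triple of $p'$-elements generating $\fA_n$ with $x\sim y$.

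The finitely many small degrees — where $n$ is not much larger than $p$ and the prime $\ell$ or a suitable $p'$-filling on the $m$ points is unavailable — I would dispatch by hand, reading off suitable triples from the character table or \cite{GM2}. Exactly here the lone exception surfaces: in $\fA_5$ the nontrivial $5'$-elements have order $2$ or $3$, and no triangle group $G(l,m,n)$ with $l,m,n\le3$ is non-solvable, so part~(b) genuinely fails for $(\fA_5,5)$; part~(a) nevertheless holds, since the conjugate $3$-cycles $(1\,2\,3)$ and $(3\,4\,5)$ generate a transitive — hence primitive, as $5$ is prime — group containing a $3$-cycle, namely $\fA_5$. The main obstacle is the small-$p$ regime, above all $p=2,3$: there so many cycle lengths are forbidden that simultaneously enforcing the $p'$ condition, evenness, the product-$1$ relation, transitivity, and the conjugacy of $x$ and $y$ becomes delicate, and I expect the careful choice of the filling cycles on the $m$ remaining points — keeping them even and $p'$ while preserving transitivity — to require the most bookkeeping.
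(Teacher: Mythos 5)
Your reduction via \cite[Thm.~1.1]{GM1}, and your generation engine (Bertrand's postulate to find a prime $\ell$ with $n/2<\ell\le n-3$, a power of $x$ giving an $\ell$-cycle with at least three fixed points, transitivity plus $\ell>n/2$ forcing primitivity, then Jordan's theorem) are sound for part~(a), and this is genuinely different from the paper, which instead quotes explicit generating triples: $(n-2)$-cycles resp.\ $(n-3)$-cycles from \cite{GM1}, second triples from \cite[Cor.~14, 17]{FG}, and small cases by hand. But for part~(b) your argument has a real gap at its central step: you say you would ``set $z=(xy)^{-1}$ and verify that its cycle type again avoids multiples of $p$.'' The cycle type of $xy$ is not determined by the cycle types of $x$ and $y$, and for a generic choice of conjugates it will not be a $p'$-type; there is nothing to verify and no freedom left once $x$ and $y$ are fixed. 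Making all three of $x,y,(xy)^{-1}$ simultaneously $p'$-elements is precisely the hard part of the proposition, and it requires a tool that prescribes the product's cycle type. This is exactly why the paper, in its one residual hard case ($p=3$ dividing $n$ even), invokes Bertram's theorem \cite[Cor.~2.2]{Be72}, which guarantees two conjugate $(n-5)$-cycles whose \emph{product} has the prescribed $3'$-type $(n-2)(2)$, and only then runs the transitivity/primitivity/multiple-transitivity argument via \cite{W}. Your plan could likely be repaired by importing Bertram's result (or a character-theoretic count of solutions to $xyz=1$ in prescribed classes) at this point, but as written the product-$1$ triple in part~(b) is not constructed.

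A secondary inaccuracy: from $p\mid m$ in your opening reduction you conclude that ``$p$ is small relative to $n$,'' which is false --- the triples of \cite[Thm.~1.1]{GM1} can consist of $(n-2)$-cycles, so $p$ could be as large as $n-2$. This does not sink the construction (for large $p$ the $p'$-condition on cycle lengths is easy to meet, and Bertrand still supplies $\ell\ne p$ for $n$ large), but the stated logic should be corrected. Finally, two smaller points need attention: transitivity of $\langle x,y\rangle$ is asserted by ``arranging supports to overlap'' although both permutations may have full support, so one must actually exhibit a connecting configuration of cycles; and conjugacy of $x$ and $y$ should be taken inside $\fA_n$, which requires the cycle type not to be one whose $\fS_n$-class splits (all cycle lengths odd and distinct) --- easily arranged with the filling cycles, but it must be said.
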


\begin{proof}
For $G=\fA_n$ an alternating group, with $n\ge11$ odd, we produced in
\cite[Lemma~4.2]{GM1} a generating system $(x_1,y_1,z_1)$ of $n-2$-cycles
with product~1. In \cite[Cor.~17]{FG} there is given a generating system
$(x_2,y_2,z_2)$ with $x_2,y_2$ squares of an $n-1$-cycle and $o(z_2)=n$.
For $n\ge12$ even, we gave a generating triple of $n-3$-cycles
in \cite[Lemma 4.3]{GM1}, while \cite[Cor.~14]{FG} gives a generating
system with $x_2,y_2$ both $n-1$-cycles and $o(z_2)=n/2$. This gives the
claim unless $p=3$ divides $n$. In the latter case, by \cite[Cor.~2.2]{Be72}
for $n>12$ there exists two $n-5$-cycles with product of type $(n-2)(2)$,
and these generate a transitive group since otherwise, the three elements
would be in $\fS_{n-2}$ with the first two being in $\fA_{n-2}$. The $n-5$
cycle guarantees the group generated is primitive and \cite[Thm.~13.8]{W}
implies that they generate a $6$-transitive group. The result now follows
by \cite[Thm.~51.1]{W}.
The alternating group $\fA_{12}$ has a generating
triple consisting of elements of order~11.
\par
For $n=5,6,7,8,9,10$ we gave generating triples of orders 5,5,7,7,7,7
respectively in \cite[Lemma 4.4]{GM1}. For $n=6,7,8,9,10$ direct computation
shows that there also exist generating triples of orders 4,5,15,15,15. Note
that $\fA_5$ is generated by two 3-cycles (with product of order~5).
\end{proof}

\begin{prop}   \label{prop:exc}
 Theorem~\ref{thm:3elt} holds for exceptional simple groups of Lie type.
\end{prop}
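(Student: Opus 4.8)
The plan is, for each family of exceptional groups of Lie type, to produce a single generating triple $(x,y,z)$ of $p'$-elements with $xyz=1$ and $x$ conjugate to $y$; this simultaneously yields part~(a) (from the pair $x,y$) and part~(b) (from the triple). By \cite[Thm.~1.1]{GM1} such a generating triple of conjugate elements of some common order $d$ already exists, so it suffices to treat the finitely many primes $p$ dividing $d$ for each family. I would split the analysis according to whether $p$ equals the defining characteristic $\ell$ or not.

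When $p=\ell$, the $p'$-elements are exactly the semisimple elements of $G$. Here I would take $x,y$ to be regular semisimple and argue as in Theorem~\ref{thm:allss}, descending from the ambient simple algebraic group: a generic pair of regular semisimple elements in a maximal torus of suitable order generates an irreducible, hence (by the subgroup structure) the whole, group, and their product can be arranged to be semisimple of $p'$-order. For $p\ne\ell$ the strategy is to choose an integer $d$ with $p\nmid\Phi_d(q)$ for which $\Phi_d(q)$ is divisible by a primitive prime divisor $r$ of $q^d-1$, and such that the associated maximal torus $T_d$ contains regular semisimple elements; one then selects $x,y$ of $p'$-order divisible by $r$ in tori of this kind.

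The generation $G=\langle x,y\rangle$ is then forced by the classification of subgroups containing an element of order divisible by a primitive prime divisor of $q^d-1$ with $d$ large relative to the rank (the work of Liebeck--Seitz in the exceptional case): such elements lie only in a short, explicit list of maximal subgroups, so a pair lying in independent tori cannot be contained in a common proper subgroup. The product condition $xyz=1$ with $z$ again a regular semisimple $p'$-element is secured by the structure-constant computations of \cite{GM2}, where the count of factorizations of a fixed class element, dominated by the contribution of $G$ itself, is already carried out for these groups; the torus orders used there can be chosen coprime to $p$.

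The main obstacle is the low-rank and twisted groups ${}^2B_2(q)$, ${}^2G_2(q)$, $G_2(q)$, ${}^3D_4(q)$ and ${}^2F_4(q)$, together with the small values of $q$ for which the requisite Zsigmondy primes fail to exist and the maximal subgroup structure is least uniform. For those finitely many borderline cases I would verify the claim directly from the known lists of maximal subgroups and the character-table data, much as the sporadic and small alternating cases are handled in the preceding propositions.
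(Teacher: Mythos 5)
Your overall skeleton matches the paper's: by \cite[Thm.~1.1]{GM1} there is a generating triple of conjugate elements with product~$1$, so only the primes dividing its common order need attention, and one wants a second generating triple of $p'$-elements (with two members conjugate) avoiding those primes. But where you propose to build that second triple from scratch, the paper's proof is a two-line citation: \cite[Thm.~2.2]{GM2} already produces, for every exceptional group other than $\tw3D_4(q)$, a second generating system of \emph{conjugate} elements with product~$1$, and \cite[Prop.~2.3]{GM2} does the same for $\tw3D_4(q)$ with a triple containing two conjugate elements, in all cases with element orders coprime to those of the \cite{GM1} triple --- so for each prime $p$ at least one of the two triples consists of $p'$-elements, giving (a) and (b) simultaneously. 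Your toolkit (Zsigmondy primes, control of overgroups of elements of large primitive-prime-divisor order, structure constants, \cite{Gow} for prescribing the product class) is in substance what the cited results of \cite{GM2} rest on, so your route is a legitimate but much longer re-derivation; note also that for $\tw3D_4(q)$ even \cite{GM2} only achieves two conjugate members in the triple, which still suffices for (a), a point your ``borderline'' list should accommodate rather than expect full conjugacy.

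There is, however, a genuine gap in your defining-characteristic case as written. The genericity argument of Theorem~\ref{thm:allss} (via \cite[Thm.~3.3]{Gur98} and openness in the triple variety) shows that a generic pair of regular semisimple elements generates a subgroup that acts irreducibly on the relevant modules; it does \emph{not} descend to generation of the finite group $G(q)$. A pair of rational points in the dense open locus can perfectly well generate a proper \emph{finite} irreducible subgroup --- subfield subgroups $G(q_0)$, among others, are invisible to any Zariski-density or irreducibility argument. This is precisely why \cite{GM1} and \cite{GM2} combine character-theoretic counting of triples with an explicit analysis of maximal overgroups even in defining characteristic; since semisimple elements are exactly the $p'$-elements when $p=\ell$, the torus/primitive-prime-divisor machinery you deploy for $p\ne\ell$ must simply be run in that case too, rather than replaced by the algebraic-group heuristic. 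A smaller caveat: \cite{Gow} requires the two factor classes to consist of \emph{regular} semisimple elements and the target class to be noncentral semisimple, so your step ``the product can be arranged to be semisimple of $p'$-order'' is only justified once regularity of your chosen classes is verified.
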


\begin{proof}
For $G$ of exceptional Lie type different from
$\tw3D_4(q)$, we produced in \cite[Thm.~2.2]{GM2} a second generating system
consisting of conjugate elements, while for $\tw3D_4(q)$ in
\cite[Prop.~2.3]{GM2} we gave a generating triple containing two conjugate
elements. Moreover, in all cases, the element orders in these triples are
coprime to those from \cite{GM1}.
\end{proof}

Finally assume that $G$ is of classical Lie type. For $n\ge2$, we let 
$\Phi_n^*(q)$ denote the largest divisor of $q^n-1$ that is relatively
prime to $q^m-1$ for all $1\le m<n$.  

\begin{prop}   \label{prop:psl2}
 Theorem~\ref{thm:3elt} holds for $\PSL_2(q)$, $q\ge4$.
\end{prop}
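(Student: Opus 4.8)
The plan is to produce, for every prime $p$, a generating triple of $p'$-elements of $\PSL_2(q)$ with product~$1$ in which two of the elements are conjugate; such a triple settles (a) and (b) simultaneously, since $z=(xy)^{-1}$ forces $\langle x,y\rangle=\langle x,y,z\rangle=G$. By the reduction already recorded, I only need to do this for the primes $p$ dividing the common order $m$ of the generators exhibited in \cite{GM1}; for every other prime that earlier triple already consists of $p'$-elements and yields both statements. The two tools I would use throughout are Dickson's description of the subgroups of $\PSL_2(q)$ (cyclic, dihedral, $\fA_4$, $\fS_4$, $\fA_5$, the subfield subgroups $\PSL_2(q_0)$ and $\PGL_2(q_0)$, and subgroups of a Borel) and the fully known character table of $\PSL_2(q)$, from which the relevant structure constants can be read off.

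The available $p'$-elements come from three families: the split torus, cyclic of order $d:=(q-1)/\gcd(2,q-1)$; the nonsplit torus, cyclic of order $e:=(q+1)/\gcd(2,q-1)$; and the unipotent elements, of order $\ell=\Char\mathbb{F}_q$. A short check shows that $d$ and $e$ are always coprime, so any prime divides at most one of them. Hence when $p=\ell$ every semisimple element is a $p'$-element and both tori are available, while when $p\ne\ell$ the unipotent elements are $p'$-elements and at least one torus has order prime to $p$. Thus in every case I have $p'$-elements of several distinct, large orders to build from.

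For part~(a) I would take $x$ to be an element of whichever torus has order prime to $p$, of order divisible by a primitive prime divisor $r$ of $q^2-1$ (a divisor of $e$) or of $q-1$ (a divisor of $d$), and pair it with a generic conjugate $x^g$. Because no proper subfield subgroup can contain an element of order $r$, and because for $q$ not too small $r$ exceeds $5$, the group $\langle x,x^g\rangle$ avoids $\fA_4,\fS_4,\fA_5$, the subfield subgroups, and the Borel subgroups, so by Dickson it equals $G$. For part~(b) I would fix classes $C_1=C_2$ and $C_3$ of $p'$-elements whose orders again include such a primitive prime divisor, compute the structure constant $\tfrac{|C_1|^2|C_3|}{|G|}\sum_\chi \chi(x)^2\chi(z)/\chi(1)$ from the character table, and subtract the contributions of the few maximal overgroups permitted by Dickson to conclude that a generating triple of the required shape exists.

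The main obstacle, as always, is the interaction of $p$ with the small cases. When $q\in\{4,5,9\}$ the group $\PSL_2(q)$ is also an alternating group, primitive prime divisors may be absent (Zsygmondy exceptions), and $\fA_4,\fS_4,\fA_5$ are large relative to $G$, so generation must be verified directly or by explicit structure-constant computation rather than by the primitive-prime-divisor shortcut; this is exactly where the genuine exception $(\fA_5,5)$ of Theorem~\ref{thm:3elt}(b) appears, and there I would instead check part~(a) by hand (for example, two conjugate elements of order~$3$ generate $\fA_5=\PSL_2(5)$). I expect the remaining work to be the careful case split over $p=\ell$, $p\mid d$, and $p\mid e$, with $p=2,3$ the most delicate, since these divide the torus orders most frequently and force several of the small subgroups to be considered at once.
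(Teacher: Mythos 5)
Your proposal is correct and takes essentially the same route as the paper: both arguments rest on the coprimality of the two torus orders $(q-1)/d$ and $(q+1)/d$, $d=\gcd(2,q-1)$, so that homogeneous generating triples of each order together cover every prime $p$, with $q\in\{4,5,9\}$ deferred to the alternating groups, $\PSL_2(7)$ checked directly, and the genuine exception $(\fA_5,5)$ settled for part~(a) by two conjugate elements of order~$3$ --- the paper simply cites \cite[Lemmas~3.14, 3.15]{GM1} for the order-$(q-1)/d$ triples and a direct calculation for the order-$(q+1)/d$ ones, whereas you would re-derive these via Dickson's subgroup list and structure constants. One bookkeeping caveat: your primitive-prime-divisor shortcut also breaks at $q=7$ and $q=8$ (the Zsigmondy exceptions for $q^2-1$), and for $q$ a square an element of order a primitive prime divisor of $q-1$ \emph{does} lie in a subfield subgroup $\operatorname{PGL}_2(q_0)$ with $q_0^2=q$ (since such a prime divides $q_0+1$), so these cases must join $q\in\{4,5,9\}$ on your direct-verification list --- which is exactly where the paper handles $\PSL_2(7)$ by hand.
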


\begin{proof}
The groups $\PSL_2(q)$ with $q\in\{4,5,9\}$ are isomorphic to alternating
groups, for which the claim follows from Proposition~\ref{prop:alter}. The
group $\PSL_2(7)$ has generating triples consisting of elements of order~7,
respectively of order~4. For $q\ge8$, $q\ne9$, we gave in
\cite[Lemmas~3.14, 3.15]{GM1}
generating triples for $\PSL_2(q)$ of orders $(q-1)/d$, where $d=\gcd(2,q-1)$.
Similarly, a direct calculation shows that there exist generating triples of
order $(q+1)/d$, which proves the claim.
\end{proof}

\begin{defn}
 Let's say that a pair $(C,D)$ of conjugacy classes of a group $G$ is
 \emph{generating} if $G=\langle x,y\rangle$ for all $(x,y)\in C\times D$.
\end{defn}

Note that by the result of Gow \cite{Gow}, if $(C,D)$ is a generating pair
for a finite group of Lie type consisting of classes of regular semisimple
elements, then we find generators $(x,y)\in C\times D$ with product in any
given (noncentral) semisimple conjugacy class, for example in $C^{-1}$.

\begin{prop}   \label{prop:Ln}
 Theorem~\ref{thm:3elt} holds for the groups $\PSL_n(q)$, $n\ge3$.
\end{prop}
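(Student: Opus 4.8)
The plan is to realize, for every prime $p$, a \emph{generating pair} of regular semisimple $p'$-classes of $\PSL_n(q)$ and then feed it into the consequence of Gow's theorem recorded above. Working in $\SL_n(q)$ and passing to the quotient $\PSL_n(q)$, I would build the two generators from the tori detected by primitive prime divisors: let $x$ be a regular semisimple element whose order is divisible by a primitive prime divisor of $q^n-1$ (a Singer-type element, acting irreducibly on the natural module $V$), and let $y$ be one whose order is divisible by a primitive prime divisor of $q^{n-1}-1$ (so that $y$ stabilizes a decomposition $V=V_1\oplus V_{n-1}$ and acts irreducibly on the second summand). The $p'$-condition is governed by $\Phi_n^*(q)$ and $\Phi_{n-1}^*(q)$: since a prime $p$ is a primitive prime divisor of $q^d-1$ for at most the single exponent $d=\mathrm{ord}_p(q)$, at most one of $\Phi_n^*(q),\Phi_{n-1}^*(q)$ is divisible by $p$, and the $p'$-part of the corresponding cyclic torus still carries a primitive prime divisor outside a short list of degenerate cases.

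To promote such a pair to a \emph{generating} pair --- i.e.\ to ensure that \emph{every} $(x,y)\in C\times D$ generates --- I would invoke the classification of subgroups of $\SL_n(q)$ containing a primitive prime divisor element. A proper subgroup $H$ containing a ppd element for $q^n-1$ must lie in the normalizer of a Singer cycle, or in an extension-field or imprimitive subgroup of the matching type; adjoining a ppd element for $q^{n-1}-1$ is incompatible with each of these (the Singer normalizer contains no ppd$(n-1)$ element, while the extension-field and imprimitive overgroups have an incompatible block structure). Hence $\langle x,y\rangle=\SL_n(q)$, and the images generate $\PSL_n(q)$.

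With $(C,D)$ a generating pair of regular semisimple $p'$-classes, the consequence of \cite{Gow} stated above lets me prescribe the product $xy$ to lie in any noncentral semisimple class. Choosing that class to be $p'$ yields $p'$-elements $x,y,z=(xy)^{-1}$ with $xyz=1$ generating $\PSL_n(q)$, which is part~(b). For part~(a) I would take $C=D$ and prescribe the product in $C^{-1}$, so that $x,y,z$ are three mutually conjugate $p'$-elements with product~$1$; this simultaneously gives the conjugate generating pair required in~(a). Verifying that $(C,C)$ is still a generating pair --- two generic conjugates of a ppd$(n-1)$ element act irreducibly and again fall under the same classification --- is the one extra point to check here.

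The main obstacle is the collection of degenerate cases in which this ppd mechanism fails: namely when $\Phi_n^*(q)$ or $\Phi_{n-1}^*(q)$ is a power of the single bad prime $p$ (so that no $p'$ primitive prime divisor element of that type exists), together with the Zsygmondy exceptions where $q^n-1$ or $q^{n-1}-1$ has no primitive prime divisor at all (essentially the exponent~$6$ with $q=2$), and the smallest groups. For these finitely many configurations I would substitute an alternative torus --- for instance one detected by a primitive prime divisor of $q^{n-2}-1$ together with the factor $q-1$ --- or simply recycle the generating triple of \cite{GM1} whenever its element orders happen to be coprime to $p$, checking generation by the same classification input or by direct computation in the few remaining groups. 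Bookkeeping the center of $\SL_n(q)$ and the coprimality conditions defining $\Phi_n^*(q)$ uniformly across these exceptions is where most of the effort lies.
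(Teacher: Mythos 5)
Your overall strategy (a generating pair of regular semisimple $p'$-classes fed into the stated consequence of Gow's theorem) is exactly the device the paper uses --- but the paper deploys it only for the one genuinely new case $\SL_4(q)$; for $n\ne4$ and a handful of small groups it simply quotes two generating triples already constructed in \cite[Prop.~3.13]{GM1} and \cite[Props.~3.1, 3.5]{GM2} whose element orders are coprime, so that for every prime $p$ at least one of the two triples automatically consists of $p'$-elements, and then disposes of $\PSL_3(2)\cong\PSL_2(7)$, $\PSL_4(2)\cong\fA_8$, $\PSL_6(2)$ and $\PSL_4(3)$ by explicit triples. Your attempt to reprove the generic case from scratch has a genuine gap: the classification you invoke for overgroups of a ppd element --- ``Singer normalizer, extension-field, or imprimitive'' --- is false as stated. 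Classical subgroups also contain ppd$(n)$ elements: $\Sp_4(q)$ and $\Omega_4^-(q)$ contain elements of order $q^2+1$ inside $\SL_4(q)$, which is precisely why the paper's $n=4$ argument must rule out the normalizers of $\Omega_4^-(q)$, $\Sp_4(q)$ and $\GL_2(q^2)\cap\SL_4(q)$ via a centralizer-order computation; and the Guralnick--Penttila--Praeger--Saxl classification further contains nearly simple examples ($\fA_{n+1}$, $\fA_{n+2}$, $\PSL_2(\ell)$, sporadic groups) in small dimensions, some of which contain both a ppd$(n)$ and a ppd$(n-1)$ element (e.g.\ $\PSL_2(11)\le\SL_5(q)$ with elements of orders $11$ and $5$). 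So ``hence $\langle x,y\rangle=\SL_n(q)$'' does not follow from what you wrote; eliminating these overgroups is where the real work lies, and it is the content of the results the paper cites.

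There is a second, structural flaw in your treatment of part (a): with the paper's definition of a generating pair ($G=\langle x,y\rangle$ for \emph{all} $(x,y)\in C\times D$), the pair $(C,C)$ is \emph{never} generating --- take $x=y$ --- and Gow's theorem only produces \emph{some} pair with prescribed product, so you cannot simultaneously set $C=D$ and retain the ``every pair generates'' property that makes the Gow pair usable. The correct move, which the paper makes, is to keep $D\ne C$ and use Gow to place $xy\in C^{-1}$: then $z=(xy)^{-1}\in C$, and since $y=x^{-1}z^{-1}$ one has $\langle x,z\rangle=\langle x,y\rangle=G$ with $x\sim z$ conjugate, which is part (a). With these two repairs --- plus the bookkeeping of the degenerate $\Phi^*$-configurations you already flagged, which the paper resolves by the explicit small-group triples above --- your outline would essentially converge to the paper's proof.
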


\begin{proof}
Let $G=\PSL_n(q)$, $n\ge3$. Note that we may assume that $(n,q)\ne(3,2),(4,2)$
as $\PSL_3(2)\cong\PSL_2(7)$ and $\PSL_4(2)\cong\fA_8$ were already handled
above. In \cite[Prop.~3.13]{GM1} we showed that $G$ is generated by a triple
of elements of order $\Phi_n^*(q)$ when $n$ is odd, respectively of order
$\Phi_{n-1}^*(q)$ when $n$ is even. For $n\ne4$ and $(n,q)\ne(6,2)$, we showed
in \cite[Prop.~3.1 and~3.5]{GM2} that there also exist generating triples of
elements of orders coprime to the former ones, of the type stated in
Theorem~\ref{thm:3elt}. The group $\PSL_6(2)$ is generated by a triple of
elements of order~7.
\par
Now consider $G=\SL_4(q)$. Let $C$ be a conjugacy class of regular semisimple
elements of order $(q^4-1)/(q-1)$. According to \cite[Lemma~2.3]{GM1}, the only
maximal subgroups of $\SL_4(q)$ which might contain an element $x\in C$ are
the normalizer of $\Omega_4^-(q)$, of $\Sp_4(q)$ or of $\GL_2(q^2)\cap\SL_4(q)$.
But in the first two of these groups the centralizer order of elements of
order $q^2+1$ is not divisible by $(q^2+1)(q+1)$ when $q>3$. Thus, any $x\in C$
lies in a unique maximal subgroup of $G$. Let $C_2$ denote a class of regular
semisimple elements in a maximal torus of order $(q^2-1)(q-1)$ of $\SL_4(q)$.
Then this does not intersect the normalizer of $\GL_2(q^2)\cap\SL_4(q)$, so
$(C_1,C_2)$ is a generating pair. By \cite{Gow} there exist
pairs with product in $C_1^{-1}$, which must generate. Now pass to the quotient
of $\SL_4(q)$ by its center. The group $\PSL_4(3)$ has a generating triple
with elements of order~5.
\end{proof}

\begin{prop}   \label{prop:Un}
 Theorem~\ref{thm:3elt} holds for the unitary groups $\PSU_n(q)$, $n\ge3$,
 $(n,q)\ne(4,2)$.
\end{prop}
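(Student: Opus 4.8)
The plan is to argue in close parallel with Proposition~\ref{prop:Ln}, combining the generating triple produced in \cite{GM1} with a second triple of coprime element orders from \cite{GM2}. Write $G=\PSU_n(q)$ with $n\ge3$ and $(n,q)\ne(4,2)$; here $\PSU_3(2)$ is solvable and $\PSU_4(2)\cong\PSp_4(3)$ is treated among the symplectic groups, which is why these cases are genuinely excluded. In \cite{GM1} we exhibited a generating triple $(x_1,y_1,z_1)$ of $G$ consisting of regular semisimple elements whose common order is the relevant primitive prime part of $|G|$, namely of order $\Phi_{2n}^*(q)$ when $n$ is odd and $\Phi_{2(n-1)}^*(q)$ when $n$ is even (the unitary analogues of the exponents used for $\PSL_n$). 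This already yields part~(b) for every prime $p$ not dividing that order.

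For the remaining primes I would appeal to \cite{GM2}, where for $n\ne4$ (and away from a few small pairs $(n,q)$) a second generating triple $(x_2,y_2,z_2)$ with $x_2\sim y_2$ was constructed whose element orders are coprime to those of $(x_1,y_1,z_1)$. Since the two orders are then coprime, any prime $p$ avoids at least one of them, so for each $p$ one of the two triples consists of $p'$-elements with product~$1$ and with two conjugate entries; this establishes both~(a) and~(b) simultaneously. The finitely many excluded small pairs, in analogy with $\PSL_6(2)$ above, would be disposed of by exhibiting an explicit generating triple of prime order by direct computation.

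The case $n=4$ I would handle separately, mirroring the $\SL_4(q)$ argument. Let $C$ be a class of regular semisimple elements of $\SU_4(q)$ lying in a Singer-type maximal torus (so of order divisible by the primitive prime divisor of $q^3+1$), and let $C_2$ be a class from a maximal torus of a different type. Using \cite[Lemma~2.3]{GM1} to enumerate the maximal subgroups that can contain an element of $C$, I would rule out the possibility that an element of $C$ and an element of $C_2$ lie simultaneously in any one of them, by a centralizer-order divisibility comparison exactly as for $\SL_4$. This shows $(C,C_2)$ is a generating pair; Gow's theorem \cite{Gow} then produces generators with product in $C^{-1}$, so that two of the three members of the resulting triple lie in $C$ and are therefore conjugate. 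Passing to the quotient by the center gives the desired triple for $G=\PSU_4(q)$, with the smallest relevant $q$ (recall $q=2$ is excluded) checked directly.

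The main obstacle is precisely this maximal-subgroup analysis in the border cases. For generic $n$ the two coprime triples cover every prime with no further work, but for $n=4$, and for the handful of small $(n,q)$ not reached by the constructions in \cite{GM1,GM2}, one must show that the chosen regular semisimple classes cannot both be contained in a common maximal overgroup. This requires the classification of maximal subgroups of $\SU_4(q)$ (the geometric Aschbacher classes together with the almost simple subgroups) and a careful divisibility comparison of the various torus and centralizer orders, with the very smallest $q$ potentially needing machine verification.
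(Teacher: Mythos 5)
There are two genuine gaps. First, your appeal to \cite{GM2} is broader than what that paper provides: the second generating triples of coprime orders from \cite[Prop.~3.6]{GM2} exist only for $n\ge8$ (they also give part~(b) of Theorem~\ref{thm:3elt} for $n=4,6$, but not part~(a)); they do \emph{not} cover $n=5,7$, nor $n=6$ for part~(a). These are infinite families in $q$, not ``finitely many excluded small pairs,'' so your fallback of direct computation cannot close them. The actual proof needs bespoke arguments here: for $n=5,7$ one works in $\SU_n(q)$ with regular semisimple classes of types $4^+$ and $3^-\oplus2^+$ (resp.\ $6^+$ and $5^-\oplus2^+$), shows any pair generates an irreducible subgroup which by \cite[Thm.~2.2]{GM1} cannot be proper (excluding $(5,2)$, handled separately), and then invokes \cite{Gow}; for $n=6$ part~(a) one reruns the structure-constant estimates of \cite[Prop.~3.12]{GM1} with classes of orders $\Phi_{10}^*(q)$ and $\Phi_3^*(q)\Phi_6^*(q)$.

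Second, your $n=4$ argument has a coprimality error that defeats its purpose. The Singer-type torus of $\SU_4(q)$ carries the primitive part $\Phi_6^*(q)$ of $q^3+1$, but this is exactly the order of the triple already supplied by \cite[Prop.~3.11 and~3.12]{GM1} for $n$ even, namely $\Phi_{2n-2}^*(q)=\Phi_6^*(q)$ when $n=4$. So for primes $p$ dividing $\Phi_6^*(q)$, neither your new triple nor the old one consists of $p'$-elements, and those primes remain uncovered; the blind transcription of the $\SL_4$ argument fails because there the class of order $(q^4-1)/(q-1)$ carries $\Phi_4^*(q)$, which \emph{is} coprime to the $\Phi_3^*(q)$ used by \cite{GM1} for $\PSL_4$. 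The paper instead gets part~(b) for $n=4$ from \cite[Prop.~3.6]{GM2}, and part~(a) by producing generating triples with product~$1$ in \emph{both} configurations $C_1\times C_1\times C_2$ and $C_1\times C_2\times C_2$, where $C_1$ has order $\Phi_6^*(q)$ and $C_2$ has order $\Phi_4^*(q)$: each configuration yields a conjugate generating pair, of order $\Phi_6^*(q)$ in one case and $\Phi_4^*(q)$ in the other, and since these are coprime every prime is covered. Note also that even where your setup is salvageable, the deferred maximal-subgroup analysis is the actual mathematical content, and a single application of \cite{Gow} produces one triple of fixed element orders, whereas the theorem demands, for \emph{every} prime $p$, a triple of $p'$-elements -- so arranging coprimality between the available triples is not a detail but the heart of the matter.
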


\begin{proof}
Let $G=\PSU_n(q)$, $n\ge3$. The case $n=3$ was already treated in
\cite[Prop.~3.1]{GM2}. In \cite[Prop.~3.11 and~3.12]{GM1} we showed that $G$ is
generated by a triple of elements of order $\Phi_{2n}^*(q)$ when $n$ is odd,
respectively of order $\Phi_{2n-2}^*(q)$ when $n$ is even. For $n\ge8$ we
showed in \cite[Prop.~3.6]{GM2} that there also exist
generating triples of elements of orders coprime to the former ones, of the
form required in Theorem~\ref{thm:3elt}. For the remaining $n$ we argue in
$G=\SU_n(q)$ and then pass to the quotient by the center. For $n=7$ let $C_1$
contain elements of type $6^+$ and $C_2$ elements of type $5^-\oplus2^+$,
for $n=5$ let $C_1$ contain elements of type $4^+$ and $C_2$ elements of
type $3^-\oplus2^+$. Then any pair from $C_1\times C_2$ generates an
irreducible subgroup, and by \cite[Thm.~2.2]{GM1} that can't be proper, when
$(n,q)\ne(5,2)$. We conclude using \cite{Gow}. The group $\PSU_5(2)$ has
generating triples of elements of order~5. 
\par
If $n=4$ or $6$, the claim of Theorem~\ref{thm:3elt}(b) holds by
\cite[Prop.~3.6]{GM2}. For Theorem~\ref{thm:3elt}(a), the group $\PSU_6(2)$
has a generating triple of elements of order~7. Else let's take $C_1$ a class
of elements of order $\Phi_{2n-2}^*(q)$, $C_2$ a class of elements of order
$\Phi_4^*(q)$ when $n=4$, $\Phi_3^*(q)\Phi_6^*(q)$ when $n=6$, and $C_3$ one
of $C_1,C_2$. Then the arguments in the proof of \cite[Prop.~3.12]{GM1} go
through, with even better estimates, since the number of characters not
vanishing on both $C_1$ and $C_2$ is smaller, and the same for the number of
maximal subgroups containing elements from both classes. It follows that there
exist generating triples with product~1 in
$C_1\times C_1\times C_2$ and in $C_1\times C_2\times C_2$. Since the orders
in the two classes are relatively coprime, this gives the result. 
\end{proof}

\begin{prop}   \label{prop:O2n+1}
 Theorem~\ref{thm:3elt} holds for the orthogonal groups $\OO_{2n+1}(q)$,
 $n\ge3$.
\end{prop}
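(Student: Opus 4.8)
The plan is to follow verbatim the template used for the linear and unitary groups in Propositions~\ref{prop:Ln} and~\ref{prop:Un}: exhibit \emph{two} generating systems for $G=\OO_{2n+1}(q)$ whose element orders are mutually coprime, so that for every prime $p$ at least one of the two systems consists of $p'$-elements. Since $\OO_{2n+1}(q)\cong\Sp_{2n}(q)$ for $q$ even---a case belonging to the symplectic groups---I may assume $q$ odd. The first system is already in hand: in \cite{GM1} we produced a generating triple of elements of order $\Phi_{2n}^*(q)$, lying in a Coxeter torus of order $q^n+1$. Such an element is regular semisimple, acting irreducibly on a nonsingular minus-type $2n$-space and fixing a nonsingular $1$-space, so it lies in the reducible subgroup $\OO_{2n}^-(q)\times\OO_1(q)$. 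This system already settles both parts of Theorem~\ref{thm:3elt} for all primes $p$ with $p\nmid\Phi_{2n}^*(q)$.

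It remains to build a second system with element orders coprime to $\Phi_{2n}^*(q)$, and here the construction splits according to the parity of $n$. Let $C_1$ be the class of order $\Phi_{2n}^*(q)$ above. For $n$ odd I take $C_2$ a class of regular semisimple elements of order $\Phi_n^*(q)$, sitting in a torus of order $q^n-1$ (realized inside $\OO_{2n}^+(q)\times\OO_1(q)$); since a primitive prime divisor of $q^n-1$ is coprime to one of $q^{2n}-1$, and since an odd-degree divisor of degree $n$ does not occur in $|\OO_{2n}^-(q)|=2q^{n(n-1)}(q^n+1)\prod_{i=1}^{n-1}(q^{2i}-1)$, the class $C_2$ has coprime order and escapes the reducible overgroup of $C_1$. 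For $n$ even this pure degree count fails---every achievable coprime primitive prime divisor already divides $|\OO_{2n}^-(q)|$---so, exactly as in the unitary case, I take $C_2$ from the torus of order $(q^{n-1}+1)(q+1)$, an element of order $\Phi_{2n-2}^*(q)\,\Phi_2^*(q)$. In both parities I would then invoke \cite[Lemma~2.3]{GM1} to bound the maximal overgroups of $C_1$ (essentially the nonsingular $1$-space stabilizer $\OO_{2n}^-(q)\times\OO_1(q)$, the torus normalizer, field-extension subgroups of $\operatorname{GU}_n(q)$-type, and subfield subgroups $\OO_{2n+1}(q_0)$) and symmetrically those of $C_2$, and to check that the two lists meet only in $G$: coprimality of the two primitive-prime-divisor orders excludes every proper common overgroup. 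Thus any pair in $C_1\times C_2$ generates an irreducible subgroup, which by \cite[Thm.~2.2]{GM1} must be all of $G$, so $(C_1,C_2)$ is a generating pair.

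With $(C_1,C_2)$ generating and both classes regular semisimple, the remark following \cite{Gow} supplies generators with product in $C_1^{-1}$, yielding a generating triple in $C_1\times C_2\times C_1$ of coprime orders; this proves Theorem~\ref{thm:3elt}(b) for $p\mid\Phi_{2n}^*(q)$. For part~(a) I would produce a conjugate generating pair of $p'$-elements in each coprime regime: the conjugate pair already present in the first triple covers $p\nmid\Phi_{2n}^*(q)$, while for $p\mid\Phi_{2n}^*(q)$ a pair of conjugates in $C_2$ suffices, their irreducibility forcing $G$ via \cite[Thm.~2.2]{GM1} and their realization with a prescribed semisimple product again guaranteed by \cite{Gow}. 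The finitely many small cases $n=3,4$ (and small $q$) I would dispose of by hand, adjusting the choice of $C_2$ or appealing to direct computation, precisely as was done for $\PSL_n(q)$ and $\PSU_n(q)$.

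The main obstacle is the overgroup analysis in the second paragraph: pinning down exactly which maximal subgroups of $\OO_{2n+1}(q)$ contain the degree-$2n$ primitive-prime-divisor element, verifying that the chosen $C_2$ (and in particular the correct choice for even $n$) meets none of them in common with $C_1$, and controlling the sporadic isomorphisms and extra maximal subgroups that intrude for small $n$ and small $q$. These degenerate configurations, rather than the generic argument, are where the real work lies.
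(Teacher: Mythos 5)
Your plan has the right architecture in its opening sentence (two generating systems with mutually coprime element orders), but the construction of the second system breaks it, and this is a genuine gap, not a detail. Your second system is built from the \emph{mixed} pair $(C_1,C_2)$ with $C_1$ the class of order $\Phi_{2n}^*(q)$ itself. Every triple you can then extract via \cite{Gow} --- whether in $C_1\times C_2\times C_1$ or with product in any other prescribed class --- still contains an element of $C_1$. So for the critical primes $p\mid\Phi_{2n}^*(q)$, the very primes the second system exists to handle, your triple is never a triple of $p'$-elements, and Theorem~\ref{thm:3elt}(b) remains unproved. Part (a) is salvageable from your mixed pair (prescribe the product in $C_2^{-1}$; then $xyz=1$ gives $G=\langle y,z^{-1}\rangle$ with $y,z^{-1}\in C_2$ conjugate $p'$-elements --- note your stated justification, ``their irreducibility forcing $G$,'' does not work as written, since for $n$ odd your $C_2$ of order $\Phi_n^*(q)$ stabilizes totally singular $n$-spaces, so two conjugates from $C_2$ can lie in a common parabolic and $(C_2,C_2)$ is not a generating pair in the all-pairs sense the Gow remark requires). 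But (b) cannot be repaired without replacing the pair: what is needed, and what the paper actually produces, is a generating pair $(C,D)$ with \emph{both} classes of regular semisimple elements of order prime to $\Phi_{2n}^*(q)$ --- supplied by \cite[Prop.~7.9]{GM2} for $n\ge7$, and for $n=4,5,6$ by the explicit classes of types $(n-1)^-\oplus1^-$ and $(n-2)^-\oplus2^+$ together with the irreducible-or-$\OO_{2n}$ dichotomy, Zsigmondy primes, \cite[Cor.~3.4]{GM2} and \cite{Gow}; for $n=3$ by types $3^+$ and $2^-\oplus1^+$ when $q\ge5$ (\cite[Prop.~3.8]{GM2}).

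Two further problems. First, your reduction of even $q$ to the symplectic case inverts the paper's logic and is circular in context: Proposition~\ref{prop:Sp2n} assumes $q$ odd for $n\ge3$ \emph{because} the present proposition covers $\OO_{2n+1}(q)\cong\Sp_{2n}(q)$ for $q$ even; merely observing the isomorphism proves nothing, so your proof must cover even $q$ (the paper's choices of classes and its small-case computations do). Second, the exceptional small cases extend beyond the ``$n=3,4$'' you allow for: the Zsigmondy failures force separate treatment of $q\le3$ for all of $n=4,5,6$ --- the paper handles $\OO_9(2)$, $\OO_9(3)$, $\OO_{11}(2)$, $\OO_{11}(3)$, $\OO_{13}(2)$, $\OO_{13}(3)$ by explicit generating triples of orders $7,13,31,41,31,61$ --- as well as $\OO_7(q)$ for $q=2,3,4$ (orders $7,13,17$). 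Your generic overgroup analysis, even if completed, would not absorb these configurations.
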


\begin{proof}
The group $G=\OO_{2n+1}(q)$ possesses a generating triple of elements of
order $\Phi_{2n}^*(q)$, by \cite[Prop.~3.7 and~3.8]{GM1}. For $n\ge7$, we
produced in \cite[Prop.~7.9]{GM2} a generating pair $(C,D)$ of conjugacy
classes containing regular semisimple elements of orders prime to
$\Phi_{2n}^*(q)$. For $n=4,5,6$, let $C$ contain elements of type
$(n-1)^-\oplus1^-$, $D$ elements of type $(n-2)^-\oplus2^+$. Then the group
generated by $(x,y)\in C\times D$ acts irreducibly or is contained in a
$2n$-dimensional orthogonal group. By consideration of suitable Zsigmondy
primes, the latter can possibly only occur when
$q\le3$, which we exclude for the moment. Otherwise, an application of
\cite[Cor.~3.4]{GM2} shows that $(C,D)$ is a generating pair, and we conclude
using \cite{Gow}. The groups $\OO_9(2)\cong\PSp_8(2)$, $\OO_9(3)$,
$\OO_{11}(2)\cong\PSp_{10}(2)$, $\OO_{11}(3)$, $\OO_{13}(2)\cong\PSp_{12}(2)$,
$\OO_{13}(3)$, possess generating triples with
elements of orders 7, 13, 31, 41, 31, 61 respectively, 
Finally, for $n=3$, we argued in \cite[Prop.~3.8]{GM2} that conjugacy
classes of regular semisimple elements of types $3^+$, $2^-\oplus1^+$ form a
generating pair in $\OO_7(q)$ for $q\ge5$, and we produced generating triples
of orders 7, 13 and~17 for $\OO_7(2)\cong\PSp_6(2)$, $\OO_7(3)$, $\OO_7(4)$
respectively.
\end{proof}

\begin{prop}   \label{prop:Sp2n}
 Theorem~\ref{thm:3elt} holds for the symplectic groups $\PSp_{2n}(q)$,
 $n\ge2$, $(n,q)\ne(2,2)$.
\end{prop}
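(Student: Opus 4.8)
The plan is to follow the pattern of the classical cases already treated, exploiting the coincidence $B_n=C_n$ in even characteristic: for $q$ even one has $\PSp_{2n}(q)\cong\OO_{2n+1}(q)$, so (for $n\ge3$) the even characteristic cases are already subsumed by Proposition~\ref{prop:O2n+1}, where the relevant groups $\PSp_8(2)\cong\OO_9(2)$, etc., were recorded. Thus the genuinely new work concerns odd $q$, where $\Sp_{2n}(q)$ and $\SO_{2n+1}(q)$ are merely isogenous rather than isomorphic, together with the small rank $n=2$. As in \cite{GM1}, the group $G$ is generated by a triple of elements of order $\Phi_{2n}^*(q)$, coming from a Coxeter torus of order $q^n+1$ (type $n^-$). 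To prove both parts of Theorem~\ref{thm:3elt} it suffices, by the remark following Gow's theorem \cite{Gow}, to exhibit a generating pair $(C,D)$ of classes of regular semisimple elements whose orders are coprime to $\Phi_{2n}^*(q)$: then the original triple and the triple produced by \cite{Gow} have coprime orders, so for every prime $p$ at least one of them consists of $p'$-elements.

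For $n$ beyond a small bound the required coprime generating pair is furnished by the companion computation in \cite{GM2}, so I would reduce to the remaining small values of $n$. There I would take $C$ to consist of elements of type $(n-1)^-\oplus1^-$ and $D$ of type $(n-2)^-\oplus2^+$, whose orders are governed by $\Phi_{2n-2}^*(q)$, respectively by $\Phi_{2n-4}^*(q)$ and $\Phi_4^*(q)$, and hence are coprime to $\Phi_{2n}^*(q)$ for $n\ge2$. The presence in $C$ of an irreducible summand of dimension $2(n-1)$, detected by a Zsigmondy prime dividing $q^{n-1}+1$ but not $q^{n-2}+1$, forces the subgroup $H=\langle x,y\rangle$ generated by any $(x,y)\in C\times D$ either to act irreducibly on the natural module or to stabilize the corresponding $2(n-1)$-dimensional nondegenerate summand; apart from a bounded set of small $q$ (typically $q\le3$), this rules out all reducible and imprimitive behaviour.

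Granting irreducibility, I would invoke Aschbacher's theorem together with the Zsigmondy primes dividing the orders of $C$ and $D$ to eliminate the geometric maximal subgroups of field-extension, tensor, subfield and imprimitive type, and then apply \cite[Cor.~3.4]{GM2} and \cite[Thm.~2.2]{GM1} to exclude the almost simple overgroups, so that $(C,D)$ is a generating pair. Gow's theorem \cite{Gow} then yields generators $(x,y)\in C\times D$ with product in $C^{-1}$, giving simultaneously a conjugate generating pair and a generating triple with product~1, all of whose element orders avoid any prime $p$ dividing $\Phi_{2n}^*(q)$; passing from $\Sp_{2n}(q)$ to the simple quotient $\PSp_{2n}(q)$ preserves these properties.

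The main obstacle is the elimination of proper overgroups for the finitely many exceptional small cases, namely the rank $n=2$ groups $\PSp_4(q)$ (where the extra maximal subgroups $\SL_2(q^2).2$, $\SO_4^\pm(q)$ and $\Sp_2(q)\wr\fS_2$ complicate the analysis) and the small $q$ for which the needed Zsigmondy primes fail to exist or coincide, together with the genuine isomorphism $\PSp_4(3)\cong\PSU_4(2)$, the latter being excluded from Proposition~\ref{prop:Un} and so requiring attention here. These I would dispatch individually, either by quoting the corresponding entry already recorded for the isomorphic orthogonal or unitary group, or by exhibiting an explicit generating triple of suitable coprime element order by direct computation.
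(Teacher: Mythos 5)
Your proposal matches the paper's proof in all essentials: reduce even $q$, $n\ge3$ to the odd-dimensional orthogonal case via $\Sp_{2n}(2^f)\cong\OO_{2n+1}(2^f)$, combine the generating triples of elements of order $\Phi_{2n}^*(q)$ from \cite[Prop.~3.8]{GM1} with a generating pair of regular semisimple classes of coprime orders from \cite[Prop.~7.8]{GM2}, conclude via \cite{Gow}, and settle $\PSp_4(q)$ and the exceptional small case by direct means (the paper quotes \cite[Prop.~3.1]{GM2} for $\PSp_4(q)$, $q\ge3$, and an explicit triple of order-13 elements for the one genuine exception $\PSp_8(3)$, i.e.\ $(n,q)=(4,3)$). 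Your intermediate fallback with classes of types $(n-1)^-\oplus1^-$ and $(n-2)^-\oplus2^+$ is superfluous, since \cite[Prop.~7.8]{GM2} already covers all $n\ge3$ except $(n,q)=(4,3)$ (and note a minor slip there: the primitive part attached to a $2^+$ summand is $\Phi_2^*(q)$, dividing $q^2-1$, not $\Phi_4^*(q)$).
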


\begin{proof}
Note that we may assume that $q$ is odd when $n\ge3$ by the result of
Proposition~\ref{prop:O2n+1}. The group $G=\PSp_{2n}(q)$ possesses a
generating triple of elements of order $\Phi_{2n}^*(q)$, resp. order~5 when
$(n,q)=(2,3)$, by \cite[Prop.~3.8]{GM1}. For $n\ge3$, $(n,q)\ne(4,3)$, we
found in \cite[Prop.~7.8]{GM2} a generating pair $(C,D)$ of conjugacy classes
containing regular semisimple elements of orders prime to $\Phi_{2n}^*(q)$,
and we may conclude as usual. The group $\PSp_8(3)$ has a generating triple
with elements of order~13. \par
For $\PSp_4(q)$ with $q\ge3$, the claim was already proved in
\cite[Prop.~3.1]{GM2}.
\end{proof}

\begin{prop}   \label{prop:O2n+}
 Theorem~\ref{thm:3elt} holds for the orthogonal groups $\OO_{2n}^+(q)$,
 $n\ge4$.
\end{prop}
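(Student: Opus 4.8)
The plan is to follow the two-system strategy of this section. Recall that in \cite{GM1} the group $G=\OO_{2n}^+(q)$ was shown to possess a generating triple of conjugate regular semisimple elements of order $\Phi_{2n-2}^*(q)$, arising from a torus of type $(n-1)^-\oplus1^-$ that acts irreducibly on a subspace of codimension~$2$; note that in plus type no single element can act irreducibly on the whole $2n$-dimensional module, so the Coxeter torus of order $q^n+1$ is unavailable and $\Phi_{2n-2}^*(q)$ is the natural choice. It therefore suffices to produce a \emph{second} generating system whose element orders are coprime to $\Phi_{2n-2}^*(q)$: then for every prime $p$ at least one of the two systems consists of $p'$-elements, which gives both parts of Theorem~\ref{thm:3elt}.

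For large $n$ I would invoke \cite{GM2}, where a generating pair $(C,D)$ of classes of regular semisimple elements of orders prime to $\Phi_{2n-2}^*(q)$ is produced. By the remark following the definition of a generating pair (Gow's theorem \cite{Gow}) one then finds $(x,y)\in C\times D$ with $xy\in C^{-1}$; the two factors lying in $C$ are conjugate and generate, giving part~(a), while the resulting triple of product~$1$ gives part~(b). Passing to the simple quotient finishes these cases. The real work is the small remaining values $n=4,5,6$.

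For these I would exhibit explicit pairs of regular semisimple classes through orthogonal decompositions of the natural module, chosen with an even number of minus-type blocks so that the ambient form stays of plus type, and with orders assembled from Zsigmondy primes coprime to $\Phi_{2n-2}^*(q)$. For instance one can pair a Singer-type class of order $q^n-1$ (order divisible by $\Phi_n^*(q)$, lying in the Levi $\GL_n(q)$ of a Siegel parabolic) with a second class of type $(n-2)^-\oplus2^-$ or $(n-1)^+\oplus1^+$, the precise choice depending on $n$; for $n=4$, where $\Phi_{2n-2}^*(q)=\Phi_6^*(q)$, one must instead combine the $\Phi_4^*(q)$- and $\Phi_3^*(q)$-classes. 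The key step is to show that such a pair $(C,D)$ is generating: a suitable Zsigmondy prime forces one generator to act irreducibly on a large subspace, and the maximal isotropic subspaces preserved by the Singer element fail to be stabilized by the other generator, so $H=\langle x,y\rangle$ is irreducible on $V$; then the Zsigmondy divisors together with the divisibility and centralizer estimates of \cite[Thm.~2.2]{GM1} and \cite[Cor.~3.4]{GM2} should eliminate every proper overgroup in Aschbacher's list---imprimitive, subfield, and smaller classical subgroups---forcing $H=G$. Gow's theorem then supplies the product-$1$ triple as above.

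I expect the main obstacle to be $n=4$: the triality automorphism of $D_4$ creates additional maximal subgroups, and the two rulings of maximal isotropic subspaces behave specially, so $\Phi_6^*(q)$ must be deployed with care to exclude the triality-type overgroups of $\OO_8^+(q)$; this should require more case analysis than $n=5,6$. Finally the finitely many genuinely small groups excluded above---such as $\OO_8^+(2)$, $\OO_8^+(3)$, $\OO_{10}^+(2)$ and $\OO_{12}^+(2)$---I would settle by direct computation, exhibiting in each an explicit generating triple of element orders prime to $\Phi_{2n-2}^*(q)$, exactly as was done for $\OO_9(2)$, $\OO_9(3)$ and the other small cases in Proposition~\ref{prop:O2n+1}.
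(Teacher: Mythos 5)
Your generic reduction matches the paper: for $n\ne4$ one cites \cite[Prop.~3.10]{GM2} for a second generating system of conjugate regular semisimple elements with orders coprime to those of the $\Phi_{2n-2}^*(q)(q+1)$-triple from \cite[Prop.~3.10]{GM1}, and your observation that a Gow triple $(x,y,(xy)^{-1})$ with $xy\in C^{-1}$ yields two conjugate generators (since $\langle x,(xy)^{-1}\rangle=\langle x,y\rangle$) is sound. But two things go wrong after that. First, a structural misreading: \cite[Prop.~3.10]{GM2} covers \emph{all} $n\ne4$, so $n=5,6$ are not exceptional here (you imported the case pattern from the odd-dimensional groups in Proposition~\ref{prop:O2n+1}); the extra machinery you sketch for $n=5,6$ is unnecessary, and your list of computational exceptions ($\OO_{10}^+(2)$, $\OO_{12}^+(2)$, $\OO_8^+(3)$) is likewise not needed --- the only genuine computational case is $\OO_8^+(2)$.

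Second, and this is the real gap: $n=4$ is the \emph{only} case the proposition actually has to argue, and your proposal leaves precisely its central difficulty unresolved. You correctly identify that triality produces maximal overgroups of $\OO_8^+(q)$ invisible to generic Aschbacher-style arguments, but then write that excluding them ``should require more case analysis'' --- that is a statement of the problem, not a proof. The paper closes this in two concrete ways. For part~(b) it takes $C_1$ a class of regular semisimple elements of order $(q^2+1)/d$ lying in a maximal torus $T$ of order $(q^2+1)^2/d^2$ and $C_2$ the triality image of $C_1$; then $\langle x,y\rangle$ contains a conjugate of $T$, and Kleidman's determination \cite[Table~I]{Kl87} shows the unique maximal subgroup containing such a torus is $(\OO_4^-(q)\times\OO_4^-(q)).2^2$, which is then evaded by choosing, via \cite{Gow}, a product of order a Zsigmondy divisor of $q^2+q+1$. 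For part~(a) it runs the character-sum counting of \cite[Prop.~3.10]{GM1} on classes of orders $\Phi_6^*(q)$ and $(q^4-1)/4$ (resp.\ $q^4-1$), obtaining generating triples in both $C_1\times C_1\times C_2$ and $C_1\times C_2\times C_2$, so that coprimality of the two orders settles every prime $p$. Your suggested $\Phi_4^*(q)$/$\Phi_3^*(q)$ pairing is in the right spirit, but without either the torus-containment-plus-Kleidman mechanism or the explicit character estimates, the triality overgroups are never actually eliminated, so the proof of the key case is missing.
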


\begin{proof}
Let $G=\OO_{2n}^+(q)$, $n\ge4$. In \cite[Prop.~3.10]{GM1} we showed that $G$
is generated by a triple of elements of order dividing $\Phi_{2n-2}^*(q)(q+1)$.
For $n\ne4$ we showed in \cite[Prop.~3.10]{GM2} that there also exist
generating pairs of conjugate regular semisimple elements of orders coprime
to the former ones, and we may conclude as before.
\par
For the $n=4$ we may assume that $q\ge3$, since for $\OO_8^+(2)$ there exist
generating triples of elements of order~9, and also of elements of order~7, by
\cite[Prop.~3.10]{GM1} and \cite[Prop.~3.10]{GM2}. Let $C_1$ contain regular
semisimple elements of order $(q^2+1)/d$, with $d=\gcd(2,q-1)$, in a maximal
torus $T$ of order $(q^2+1)^2/d^2$, and $C_2$ the image of $C_1$ under triality.
Let $(x,y)\in C_1\times C_2$. Then $H:=\langle x,y\rangle$ contains $T$ up to
conjugation. By \cite[Table~I]{Kl87} the only maximal subgroup of
$\OO_8^+(q)$ with this property is $M=(\OO_4^-(q)\times\OO_4^-(q)).2^2$.
According to \cite{Gow} there are $(x,y)$ with product of order a Zsigmondy
prime divisor of $q^2+q+1$, and hence not contained in $M$. This gives a
triple as in Theorem~\ref{thm:3elt}(b).
\par
We now show Theorem~\ref{thm:3elt}(a) for $n=4$ and $q \ge 3$.  Let $C_1$ be
a conjugacy class of elements of order $\Phi_6^*(q)$. Let $C_2$
be a conjugacy class of regular semisimple elements of order 
$(q^4-1)/4$ (or $q^4-1$ if $q$ is even) having precisely two 
non-trivial invariant subspaces.   Let $C_3$ be either $C_1$ or $C_2$.
Arguing as in \cite[Prop.~3.10]{GM1}, one gets a lower bound for the number of
triples $(x,y,z) \in C_1 \times C_2 \times C_3$ with product $1$ (the bound
is actually much better than in \cite{GM1} since there will be many fewer
characters not vanishing on $C_1$ and $C_2$). Similarly, one gets an
upper bound for the number of non-generating such triples (again the bound
is much better than that given in \cite{GM1} since there are many fewer maximal
subgroups --- for example, it is clear the group generated is irreducible).  
It follows that two elements of $C_1$ or $C_2$ will generate.  Since
the orders of the elements in $C_1$ and $C_2$ are relatively prime,
the result follows.  
\end{proof}

\begin{prop}   \label{prop:O2n-}
 Theorem~\ref{thm:3elt} holds for the groups orthogonal $\OO_{2n}^-(q)$,
 $n\ge4$.
\end{prop}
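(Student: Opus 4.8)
The plan is to follow exactly the template that has been used for the other families of classical groups in the preceding propositions, namely to exhibit a second generating system of $p'$-elements whose orders are coprime to those of the generating triple already produced in \cite{GM1}, so that for any prime $p$ at least one of the two systems avoids $p$. First I would recall that in \cite[Prop.~3.9]{GM1} the group $G=\OO_{2n}^-(q)$ was shown to be generated by a triple of elements of order $\Phi_{2n}^*(q)$ (the Zsigmondy-type torus order coming from a Coxeter torus of the minus-type orthogonal group). These elements are regular semisimple and lie in a unique maximal torus, which by the Zsigmondy prime condition sits in very few maximal subgroups.

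Next I would produce the coprime generating data. For $n\ge5$ (or whatever threshold the companion reference supplies) I expect that \cite[Prop.~3.10 or~3.11]{GM2} already furnishes a generating pair $(C,D)$ of conjugacy classes of regular semisimple elements of orders prime to $\Phi_{2n}^*(q)$; in that case one invokes Gow's theorem \cite{Gow} to find $(x,y)\in C\times D$ with product in a prescribed noncentral semisimple class, giving the triple with product~$1$ required for Theorem~\ref{thm:3elt}(b), while the pair itself handles part~(a). The element orders being coprime to $\Phi_{2n}^*(q)$ guarantees that for every prime $p$ one of the two triples consists of $p'$-elements.

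The remaining work, as in the $\OO_{2n}^+$ case, is the handful of small cases $n=4$ (and possibly a few small $q$) where the general estimate is unavailable. Here I would argue directly: choose $C_1$ a class of elements of order $\Phi_{2n}^*(q)$ and $C_2$ a class of regular semisimple elements of order built from $\Phi_4^*(q)$, $\Phi_6^*(q)$, or a suitable product of cyclotomic factors chosen coprime to $\Phi_{2n}^*(q)$. The key point is that an element of order $\Phi_{2n}^*(q)$ forces irreducibility, and by the Zsigmondy-prime bound on its centralizer order it lies in only a short list of maximal subgroups (read off from \cite{Kl87} or \cite[Lemma~2.3]{GM1}); one then checks that $C_2$ avoids that list, so $(C_1,C_2)$ is a generating pair, and concludes via \cite{Gow}. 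For the genuinely exceptional tiny groups such as $\OO_8^-(2)$, $\OO_8^-(3)$ I would simply cite an explicit computation producing a generating triple of the stated type, exactly as was done for $\OO_8^+(2)$ and $\OO_7(q)$ above.

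The main obstacle I anticipate is precisely this small-$n$, small-$q$ bookkeeping: ensuring that the coprime-order class $C_2$ actually exists in $\OO_{2n}^-(q)$ (minus-type tori impose parity constraints on which cyclotomic factors can appear, so the analysis differs subtly from the plus-type case treated in Proposition~\ref{prop:O2n+}), and verifying that it misses every maximal overgroup of $C_1$. The cyclotomic-polynomial arithmetic of minus-type maximal tori, together with the triality-free but still delicate subgroup structure of $\OO_8^-(q)$, is where the real care is needed; everything else is a direct transcription of the arguments already in place.
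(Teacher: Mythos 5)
Your proposal is correct and follows essentially the same route as the paper: the actual proof combines the generating triple of elements of order $\Phi_{2n}^*(q)$ from \cite[Prop.~3.6]{GM1} with a coprime-order generating pair $(C,D)$ of regular semisimple classes from \cite[Prop.~7.10]{GM2}, valid for all $(n,q)\notin\{(4,2),(4,4),(5,2),(6,2)\}$ (so, unlike the $\OO_{2n}^+$ case, no separate hand-built argument for $n=4$ is needed), and closes via Gow's theorem \cite{Gow} to get triples in $C\times C\times D$. The residual small-case bookkeeping you anticipated is exactly what remains, with the true exception list being $\OO_8^-(2)$, $\OO_8^-(4)$, $\OO_{10}^-(2)$, $\OO_{12}^-(2)$, handled by direct computation producing generating triples of elements of orders $7$, $13$, $17$, $31$, all prime to $\Phi_{2n}^*(q)$.
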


\begin{proof}
The group $G=\OO_{2n}^-(q)$ possesses a generating triple of elements of
order $\Phi_{2n}^*(q)$, by \cite[Prop.~3.6]{GM1}. For 
$(n,q)\notin\{(4,2),(4,4),(5,2),(6,2)\}$, we produced in \cite[Prop.~7.10]{GM2}
a generating pair $(C,D)$ of conjugacy classes of $G$ containing regular
semisimple elements of orders prime to $\Phi_{2n}^*(q)$.
Now by \cite{Gow} there exist triples in $C\times C\times D$, for example.
Direct computation shows that the groups $\OO_8^-(2)$, $\OO_8^-(4)$,
$\OO_{10}^-(2)$, $\OO_{12}^-(2)$ possess generating triples with elements of
orders 7, 13, 17, 31 respectively, which are again prime to $\Phi_{2n}^*(q)$.
\end{proof}

\section{Finite Groups}  \label{sec:finite} 

Here, we prove Theorem~\ref{thm:general}(a)--(c). By
Corollary~\ref{cor:infinite} it suffices to consider finite groups.

Fix a field $k$ of characteristic $\Char k = p$. By Theorem~\ref{thm:gm1},
we may assume that $p > 0$.  Assume that
$1\ne G \le \GL_n(k)=\GL(V)$ is irreducible on $V$. By extending scalars, we
can reduce to the case that $k$ is algebraically closed ($V$ would at worst be
a direct sum of Galois conjugates of a given irreducible module).

Let $N$ be a minimal normal subgroup of $G$. Thus, $N$ acts completely
reducibly on $V$ and without fixed points. We break up the argument depending
upon the structure of $N$.

\begin{lem}   \label{lem:odd}
 If $|N|$ is odd, then $N$ is an $r$-group for some odd prime $r \ne p$
 and there exists a $p'$-element $g \in N$ with
 $\dim C_V(g) <  (1/r) \dim V \le (1/3) \dim V$. 
\end{lem}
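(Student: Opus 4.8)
The plan is to first pin down the group-theoretic structure of $N$ and then run a short averaging argument over~$N$.

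First I would identify $N$ precisely. A minimal normal subgroup is characteristically simple, hence a direct product of copies of a single simple group; since $|N|$ is odd, the Feit--Thompson theorem forces that simple group to be abelian, so $N$ is elementary abelian of exponent a prime $r$, and $r$ is odd because $|N|$ is. To see $r\ne p$, I would use that a nontrivial $p$-group acting on a nonzero module in characteristic $p$ always has a nonzero fixed vector; since $C_V(N)=0$ by hypothesis, $N$ cannot be a $p$-group, so $r\ne p$, and in particular every element of $N$ is a $p'$-element. Because $r\ne p$ and $k$ is algebraically closed, $N$ acts diagonalizably, giving a weight decomposition $V=\bigoplus_{\chi}V_\chi$ over the characters $\chi\co N\to k^\times$, and the trivial character is absent since $C_V(N)=0$.

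Next I would average the fixed-space dimension over $N$. For $g\in N$ one has $C_V(g)=\bigoplus_{\chi(g)=1}V_\chi$, so interchanging the order of summation gives
\[
\frac{1}{|N|}\sum_{g\in N}\dim C_V(g)=\sum_{\chi}\dim V_\chi\cdot\frac{|\ker\chi|}{|N|}.
\]
The key observation is that for a nontrivial $\chi$ the image lies in the group of $r$th roots of unity, which has prime order $r$, so $\ker\chi$ has index exactly $r$; hence each surviving term contributes $(1/r)\dim V_\chi$ and the average equals $(1/r)\dim V$.

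Finally I would extract a strictly better-than-average element. The identity contributes $\dim C_V(1)=\dim V$, which strictly exceeds the average $(1/r)\dim V$ since $r>1$; therefore the average of $\dim C_V(g)$ over the nonidentity elements is strictly smaller than $(1/r)\dim V$, so some $g\ne1$ in $N$ satisfies $\dim C_V(g)<(1/r)\dim V$. As $r$ is an odd prime we have $r\ge3$, giving $(1/r)\dim V\le(1/3)\dim V$, and such $g$ is automatically a $p'$-element. The only real subtlety is the structural input: isolating $N$ as elementary abelian (which needs Feit--Thompson) and ruling out $r=p$; once the weight decomposition is in place the counting is immediate, and the strict inequality comes for free from the oversized identity contribution.
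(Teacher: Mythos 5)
Your proof is correct, and its first half is exactly the paper's: the paper also deduces from $|N|$ odd (via Feit--Thompson) that the minimal normal subgroup $N$ is solvable, hence an elementary abelian $r$-group, with $r\ne p$ forced by $C_V(N)=0$. The divergence is in the second half. The paper then simply cites \cite[Cor.~1.3]{GMa}, a result of Guralnick--Mar\'oti bounding the average fixed-space dimension for a general finite group acting completely reducibly without fixed points by $(1/r)\dim V$, $r$ the smallest prime divisor of the group order; you instead reprove the special case needed here from scratch. For elementary abelian $N$ of exponent $r$ coprime to $p$ over an algebraically closed field, the weight decomposition $V=\bigoplus_\chi V_\chi$ (with the trivial character absent) gives
\[
\frac{1}{|N|}\sum_{g\in N}\dim C_V(g)=\sum_{\chi}\frac{|\ker\chi|}{|N|}\dim V_\chi=\frac{1}{r}\dim V,
\]
since every nontrivial character of an elementary abelian $r$-group has kernel of index exactly $r$, and discarding the identity's oversized contribution $\dim V$ yields the \emph{strict} inequality $\dim C_V(g)<(1/r)\dim V$ for some $1\ne g\in N$, as the lemma asserts. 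So your argument is in effect an unpacking of the citation: the cited corollary is a much more general (and harder) theorem, but in the abelian case it reduces to precisely your orthogonality computation. What you buy is a self-contained, elementary proof that also makes transparent where the strict inequality comes from; what the citation buys the paper is brevity and a uniform tool that is reused elsewhere (e.g.\ in Lemma~\ref{lem:even}, where $N$ is a $2$-group and the same averaging bound is invoked).
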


\begin{proof}
Since $|N|$ is odd, $N$ is solvable and since it is a minimal normal
subgroup, it must be an $r$-group with $r \ne p$. Now apply
\cite[Cor.~1.3]{GMa}. 
\end{proof} 

\begin{lem}  \label{lem:even}
 If $N$ is an elementary abelian $2$-group, then $p \ne 2$ and 
 \begin{enumerate}[\rm(a)]
  \item there exists an involution $g \in N$ with $\dim C_V(g)<(1/2)\dim V$;
  \item if $p$ does not divide $\dim V$, then there exists a $p'$-element
   $g \in G$ with $\dim C_V(g)  \le (1/3) \dim V$.
 \end{enumerate}
\end{lem}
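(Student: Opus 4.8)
The plan is first to pin down the structure of $V$ as an $N$-module, then to treat (a) by a direct averaging argument over $N$ and (b) by passing to the permutation action of $G$ on the $N$-eigenspaces. To see that $p\ne2$: if $p=2$ then $N$ is a nontrivial normal $p$-subgroup, every element of $N$ acts unipotently, and a nontrivial $p$-group in characteristic $p$ has nonzero fixed points on any nonzero module, so $C_V(N)\ne0$, contradicting our standing hypothesis. Hence $p\ne2$ and $N$ is a $p'$-group acting semisimply. As $N$ is elementary abelian of exponent $2$, its $k$-characters take values in $\{\pm1\}$, and $V=\bigoplus_{\chi\in\Omega}V_\chi$ decomposes into $N$-eigenspaces with $\Omega\subseteq\hat N$. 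Since $C_V(N)=0$ the trivial character does not occur, so $\Omega\subseteq\hat N\setminus\{1\}$. Because $V$ is $G$-irreducible and $G$ normalizes $N$, $G$ permutes the $V_\chi$ and must act transitively on $\Omega$; hence all eigenspaces share a common dimension $d$ and $\dim V=md$ with $m=|\Omega|$. Under the hypothesis of (b), $p\nmid\dim V$ forces $p\nmid m$ (and $p\nmid d$), and this is the only place that hypothesis enters.

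For (a) I would average over the involutions of $N$. For a nontrivial character $\chi$ the kernel $\ker\chi$ has index $2$ in $N$, and $\dim C_V(t)=d\,|\{\chi\in\Omega:\chi(t)=1\}|$; summing over all $t\in N$ and using that every $\chi\in\Omega$ is nontrivial gives $\sum_{t\in N}\dim C_V(t)=(|N|/2)\dim V$. Subtracting the identity contribution $\dim V$ and dividing by the $|N|-1$ involutions shows their average fixed-space dimension is $\frac{|N|/2-1}{|N|-1}\dim V<\frac12\dim V$; hence some involution $t\in N$, which is a $p'$-element, satisfies $\dim C_V(t)<\frac12\dim V$.

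For (b) the starting point is the identity, valid for every $g\in G$ since $g$ permutes the $V_\chi$: writing $\dim C_V(g)=\sum_{O}\dim C_{V_{\chi_O}}(g^{l_O})$, where $O$ runs over the $\langle g\rangle$-orbits on $\Omega$ and $l_O$ is the length of $O$, one obtains $\dim C_V(g)\le d\cdot c(g)$ with $c(g)$ the number of such orbits. (Equivalently, by Clifford theory $V=\mathrm{Ind}_H^G U$ with $H=\mathrm{Stab}_G(\chi_0)\ge C_G(N)$ of index $m$ and $\dim U=d$.) Thus it suffices to produce a $p'$-element $g$ with $c(g)\le m/3$. I would handle this by two complementary mechanisms. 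When $\Omega$ is ``small'' or carries an odd linear relation among the $\chi$, Scott's Lemma applied to a triple of commuting involutions $x,y,xy\in N$ spanning a rank-$2$ subspace $U\le N$ with $\Omega\cap U^\perp=\emptyset$ gives $\dim C_V(x)+\dim C_V(y)+\dim C_V(xy)\le\dim V$ (the fixed-point-free condition holds simultaneously on $V$ and $V^*$ since $\chi=\chi^{-1}$), so one of these $p'$-elements of $N$ meets the bound; in the extreme case where some $t\in N$ has $\chi(t)=-1$ for all $\chi\in\Omega$ one even gets $C_V(t)=0$. When $\Omega$ is ``large'', $\bar G=G/C_G(N)$ acts as a rich transitive group of degree $m$, and one instead produces a $p'$-element with long cycles on $\Omega$, using $p\nmid m$ to guarantee that the required cycle structure is realized by a $p'$-element.

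The hard part is exactly the production of the $p'$-element in (b): the orbit-count bound $c(g)\le m/3$ is substantially stronger than merely exhibiting a derangement, and the two regimes above must be glued together uniformly. The main obstacle I anticipate is controlling small degrees $m$ (and transitive groups possessing no element of small orbit number), where the permutation action alone cannot beat $1/2$ and one must fall back on the eigenvalue structure inside the $V_\chi$ together with carefully chosen involutions of $N$; checking that $p\nmid m$ always suffices to bridge these cases, rather than leaving a genuine exception, is where the real work lies.
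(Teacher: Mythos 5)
Your setup and part (a) are correct: the eigenspace decomposition $V=\bigoplus_{\chi\in\Omega}V_\chi$ with $G$ transitive on the $m$ blocks, the bound $\dim C_V(g)\le d\cdot c(g)$ via orbit counting, and the averaging over $N$ for (a) (which is exactly the content of the result the paper invokes, \cite[Cor.~1.3]{GMa}) are all sound, as is the observation that $p\nmid\dim V$ is used only through $p\nmid m$. But for part (b) you have not given a proof: you reduce correctly to producing a $p'$-element with at most $m/3$ orbits on $\Omega$, propose two mechanisms, and then concede that gluing them ``is where the real work lies'' --- and indeed neither mechanism closes. Your Scott's Lemma mechanism requires a rank-$2$ subgroup $U\le N$ contained in no $\ker\chi$ with $\chi\in\Omega$; if $N$ has rank $e\ge3$ and $\Omega$ is all of $\hat{N}\setminus\{1\}$ (as in the paper's own example $2^3.7$ acting in dimension $7$), the characters vanishing on any rank-$2$ subspace form a subgroup of order $2^{e-2}>1$, so every such $U$ lies in the kernel of some $\chi\in\Omega$ and no admissible $U$ exists. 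Your ``long cycles'' mechanism is not a theorem: a transitive group need not contain any element with few orbits --- if $\bar{G}=G/C_G(N)$ is a transitive $2$-group of degree $m=4$, every element has at least $m/2$ orbits, so the orbit count alone can never beat $1/2$, exactly the obstacle you flagged but did not resolve.

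The paper closes the gap with two ingredients your sketch is missing. First, the theorem of Fein--Kantor--Schacher \cite[Thm.~1]{FKS}: every transitive permutation group of degree $m>1$ contains a fixed-point-free element $x$ of prime power order $r^a$. The hypothesis $p\nmid\dim V$ enters exactly where you predicted: a fixed-point-free $p$-element would force $p\mid m$, so $r\ne p$ and $x$ is automatically a $p'$-element. If $r$ is odd, all orbits of $x$ on $\Omega$ have size at least $3$ and your bound $\dim C_V(x)\le d\,c(x)\le(1/3)\dim V$ finishes. The genuinely hard case $r=2$, where orbits of size $2$ give only $1/2$, is handled not through the permutation action but by averaging $\dim C_V(g)$ over the entire coset $xN$, as in \cite[Thm.~5.8]{GM1}: since $x$ fixes no block, this average is at most $(1/4)\dim V$, and every element of $xN$ lies in the $2$-group $N\langle x\rangle$, hence is a $p'$-element since $p\ne2$. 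This coset-averaging, which exploits the $\pm1$ eigenvalue structure of $N$ inside the blocks, is precisely the ``fall back on the eigenvalue structure together with carefully chosen involutions of $N$'' that you anticipated but left open; without the FKS derangement theorem and this trick, your part (b) remains a plan rather than a proof.
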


\begin{proof}
Clearly, $p \ne 2$. By \cite[Cor.~1.3]{GMa}, there exists $g \in N$
with $\dim C_V(g) < (1/2) \dim V$.  

So we may assume that $p$ does not divide $\dim V$.
If $N$ is central, then any $1 \ne g \in N$ satisfies $C_V(g)=0$ and there
is nothing to prove. Otherwise, $V = \bigoplus V_i$ where the $V_i$ are the
distinct $N$-eigenspaces. Since $V$ is irreducible, $G$ permutes the $V_i$
transitively. By \cite[Thm.~1]{FKS}, there exists $x \in G$ 
of prime power order $r^a$ having no fixed points on the set of $V_i$.
Since $p$ does not divide $\dim V$, $r \ne p$. If $r$ is odd, then every
orbit of $x$ has size at least $3$ whence $\dim C_V(x) \le (1/3) \dim V$. So
we may assume that $r =2$. It follows as in the proof of \cite[Thm.~5.8]{GM1}
that the average dimension of the fixed point spaces of elements in the
coset $xN$ is at most $(1/4) \dim V$.  Since every element in $xN$ is a
$2$-element, the result follows. 
\end{proof}

The remaining case is when $N$ is a direct product of $t \ge 1$ isomorphic
copies of a nonabelian simple group $L$. Let $W$ be an irreducible
$kN$-submodule of $V$. By reordering, we may write
$W = U_1 \otimes\cdots\otimes U_t$
where each $U_i$ is an irreducible $kL$-module with $U_i \cong k$ if and only
if $i > m$ for some $m > 1$. First we note that the proof of
\cite[Cor.~5.7]{GM1} gives:  

\begin{lem}   \label{lem:natural}
 Let $L$ be a simple group of Lie type over a finite field of
 characteristic~$p$ and let $E = L \times\cdots\times L$. Let $k$ be an
 algebraically closed field of characteristic~$p$, $V$ a completely reducible
 $kE$-module with $C_V(E)=0$. Then there exists a semisimple element
 $x \in E$ with $\dim C_V(x)  \le (1/3) \dim V$.
\end{lem}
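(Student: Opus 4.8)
The plan is to combine the generation results of Section~\ref{sec:gen} with Scott's Lemma, and to upgrade from a single factor to the direct product $E=L^t$ by analysing a generic regular semisimple element through the ambient algebraic group. First I would record a reduction that comes for free. Since $V$ is completely reducible with $C_V(E)=0$, it has no trivial composition factor; as the trivial module is self-dual and the composition factor multiplicities of $V$ and $V^*$ coincide, the dual $V^*$ is likewise completely reducible with $C_{V^*}(E)=0$. This is exactly the extra input needed to run Scott's Lemma \cite{scott} with the sharp bound.

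Next I would settle the case $t=1$, where $E=L$. Because $\Char k=p$ is the defining characteristic of $L$, the $p'$-elements of $L$ are precisely its semisimple elements. By Theorem~\ref{thm:3elt}(b) there are $p'$-elements $x,y,z$ with $xyz=1$ and $L=\langle x,y\rangle$, provided $(L,p)\ne(\fA_5,5)$. Since $C_V(L)=C_{V^*}(L)=0$, Scott's Lemma yields $\dim C_V(x)+\dim C_V(y)+\dim C_V(z)\le\dim V$, so one of these three semisimple elements has the required fixed space. The single exception $(L,p)=(\fA_5,5)=(\PSL_2(5),5)$ I would dispose of by hand: there are only finitely many irreducible modules (restrictions of $\SL_2$-modules of restricted highest weight, together with their Frobenius twists), and one checks directly that a semisimple element of order $2$ or $3$ has fixed space of dimension at most $(1/3)\dim V$ on each of them.

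The main work, and the main obstacle, is the passage to general $t$. A generation of $L^t$ by two elements fails once $t$ exceeds the number of suitable classes of generating triples, so Scott's Lemma cannot be applied with $\langle x,y\rangle=E$. Instead I would use that every irreducible $kE$-summand of $V$ is a tensor product $U_1\otimes\cdots\otimes U_t$ of irreducible $kL$-modules, and that in defining characteristic each $U_i$ is the restriction of a rational module for the simple simply connected cover $\tilde{\mathbf G}$ of $L$. Thus $V$ becomes a completely reducible rational module for the semisimple group $\tilde{\mathbf G}^t$ with trivial fixed space, and the fixed space of a regular semisimple element $(s_1,\dots,s_t)$ on $U_1\otimes\cdots\otimes U_t$ is governed purely by which products of weight-values equal $1$. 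For $t\ge2$ the group $\tilde{\mathbf G}^t$ has rank $\ge2$ (the single rank-$1$ simple case arises only for $t=1$, already handled), so the semisimple-group version of Theorem~\ref{thm:allss} identifies the generic such fixed-space dimension and shows it is at most $(1/3)\dim V$.

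The remaining, and hardest, point is to realise this generic value inside the finite group $L^t$, which being finite is not Zariski dense in $\tilde{\mathbf G}^t$, so the density statement of Theorem~\ref{thm:algebraic} does not apply directly. I would choose $s=(s_1,\dots,s_t)$ with each $s_i\in L$ a regular semisimple element of large order, for instance of order divisible by a Zsigmondy prime as produced throughout Section~\ref{sec:gen}, so that the eigenvalues of the $s_i$ are \emph{independent enough} across the factors: any accidental relation forcing a product of weight-values to equal $1$, beyond those present generically, would impose a congruence on the (bounded, once $V$ is fixed) weights of $V$ modulo the large order of $s$, and hence cannot occur. Then $\dim C_V(s)$ equals the generic value and the bound follows. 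The genuine difficulty lies precisely in making this eigenvalue-independence across factors quantitative, which is the tensor-product bookkeeping underlying the proof of \cite[Cor.~5.7]{GM1}, together with verifying the regularity and non-centrality hypotheses inherited from Theorem~\ref{thm:allss}.
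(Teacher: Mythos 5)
Your duality reduction and your $t=1$ argument (Theorem~\ref{thm:3elt}(b) plus Scott's Lemma, with $(\fA_5,5)$ checked by hand) are sound and essentially reproduce the paper's Corollary~\ref{cor:simple}; note, though, that your parenthetical check is half wrong, since an involution of $\PSL_2(5)$ has a $3$-dimensional fixed space on the $5$-dimensional Steinberg module, so only the order-$3$ element works there. The genuine gap is in your treatment of $t\ge2$, in two respects. First, your appeal to a ``semisimple-group version'' of Theorem~\ref{thm:allss} on the grounds that $\tilde{\mathbf G}^t$ has rank $\ge2$ is illegitimate when $L=\PSL_2(q)$: the rank hypothesis in Theorem~\ref{thm:allss} concerns a \emph{simple} group, because its input \cite[Thms.~6.11, 6.15]{GM2} on the triple variety $X\subseteq C^3$ is a statement about simple factors; $\SL_2\times\cdots\times\SL_2$ has total rank $t$ but every factor has rank $1$, and no version of the theorem covers it. This matches the paper, which invokes Theorem~\ref{thm:allss} only for $L\ne\PSL_2(q)$ and notes that even the proof of \cite[Cor.~5.7]{GM1} breaks down for $\PSL_2(5)$ with $p=5$ and $\PSL_2(7)$ with $p=7$, where ``a more complicated proof'' is needed. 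These $\SL_2$ cases --- where semisimple elements have order only dividing $q\pm1$, so eigenvalue coincidences on twisted tensor factors are unavoidable --- are exactly the hard core of the lemma, and your sketch does not touch them.

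Second, the step you single out as the ``hardest point,'' namely realizing the generic fixed-space dimension inside the finite group, is a phantom difficulty. Theorem~\ref{thm:allss} is a pointwise statement: it applies to \emph{any} regular semisimple $g$ with $g^3$ noncentral, in particular to a regular semisimple element of the finite group, because the Zariski density in its proof runs over the variety of triples $(x,y,z)\in C^3$ with $xyz=1$ --- all conjugate to the fixed $g$, hence all with the same value of $\dim C_V$ --- and not over $g$ itself. So no eigenvalue-independence transfer is needed wherever Theorem~\ref{thm:allss} (or a semisimple-group analogue) applies; and where it does not apply, your congruence heuristic is precisely the unproven weight-counting that constitutes the proof of \cite[Cor.~5.7]{GM1}, to which you explicitly defer --- but that citation \emph{is} the paper's entire proof of this lemma. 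In short, your proposal reproduces the paper's citation without supplying the missing estimate, and its one new structural reduction (total rank $\ge2$ for $t\ge2$) fails exactly on the only genuinely difficult family.
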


Actually, the proof in \cite{GM1} does not work if $L \cong \PSL_2(5)$
with $p=5$, or $L\cong\PSL_2(7)$ with $p=7$.      
A more complicated proof can be given in these cases to show
that the result is still true.   The result also follows easily if 
$L \ne \PSL_2(q)$  using  Theorem \ref{thm:allss}.

\begin{lem}   \label{lem:3dim}  
 Let $L$ be a nonabelian finite simple group and let $E=L\times\cdots\times L$
 ($t$ copies). Let $k$ be an algebraically closed field of characteristic $p$,  
 $V=U_1 \otimes\cdots\otimes U_t$ a nontrivial irreducible $kE$-module with
 $\dim U_i=3$ for some $i$.  
 There exists a $p'$-element $x \in E$ (independent of $V$)
 such that all eigenspaces of $x$ have dimension at most $(1/3) \dim V$.
\end{lem}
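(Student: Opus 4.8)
The plan is to reduce the entire statement to a single local problem about $L$: to exhibit one $p'$-element of $L$ that has three \emph{distinct} eigenvalues on every $3$-dimensional irreducible $kL$-module. Granting such an element $g$, and letting $i$ be an index with $\dim U_i=3$, I would set $x=(1,\dots,1,g,1,\dots,1)\in E$ with $g$ in the $i$-th coordinate. Then $x$ acts on $V=U_1\otimes\cdots\otimes U_t$ as $\mathrm{id}\otimes\cdots\otimes g\otimes\cdots\otimes\mathrm{id}$, so its eigenvalues are exactly the three distinct eigenvalues $\lambda_1,\lambda_2,\lambda_3$ of $g$ on $U_i$, and the $\lambda_j$-eigenspace of $x$ is $\bigotimes_{l\ne i}U_l$ tensored with the one-dimensional $\lambda_j$-eigenspace of $g$ on $U_i$. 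Hence each of the three eigenspaces of $x$ has dimension exactly $(1/3)\dim V$, and $x$ is a $p'$-element since $g$ is. As $g$ depends only on $L$ (and is placed in the relevant slot), the element $x$ is independent of $V$ in the required sense.

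The task thus becomes: for every finite simple group $L$ admitting a faithful irreducible $3$-dimensional $kL$-module, produce such a $g$. First I would invoke the classification of finite simple groups with a faithful irreducible representation of degree three over $k$. Since $L$ is simple and nonabelian, any such module is faithful and its image lies in $\SL_3(k)$; the possibilities are the groups of type $A_1$ and $A_2$ in their defining characteristic --- namely $\PSL_2(q)$ via the module $L(2)=\mathrm{Sym}^2$ of the natural module ($p$ odd), $\PSL_3(q)$ on its natural and dual modules (forcing $3\nmid q-1$), and $\PSU_3(q)$ (forcing $3\nmid q+1$) --- together with the finitely many cross-characteristic exceptions $\fA_5$, $\fA_6$, $\PSL_2(7)$ (the last two also arising as the defining-characteristic groups $\PSL_2(9)$ and $\PSL_3(2)$).

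For the groups of Lie type in defining characteristic I would take $g$ to be a regular semisimple element: it is automatically a $p'$-element and has three distinct eigenvalues $\alpha,\beta,\gamma$ on the natural module, hence $\alpha^{-1},\beta^{-1},\gamma^{-1}$ on the dual and $\alpha^{p^j},\beta^{p^j},\gamma^{p^j}$ on any Frobenius twist, all still distinct; so one fixed $g$ serves every $3$-dimensional module at once. For $\PSL_2(q)$ on $\mathrm{Sym}^2$ the eigenvalues are $\lambda^2,1,\lambda^{-2}$, which are distinct precisely when the order of $\lambda$ is neither $1$, $2$, nor $4$; an element of order $3$ works whenever $p\ne3$, and for $p=3$ a $p'$-element of order $\ge5$ exists for the simple groups $\PSL_2(3^f)$ ($f\ge2$) by a Zsigmondy argument. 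The remaining exceptions I would dispatch by hand, reading three distinct eigenvalues off the character: an element of order $5$ for $\fA_5$ in characteristic $3$, an element of order $7$ for $\PSL_2(7)\cong\PSL_3(2)$ in characteristic $2$, an element of order $3$ for $\PSL_2(7)$ in characteristic $7$, and likewise for $\fA_6\cong\PSL_2(9)$.

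The step I expect to be the main obstacle is the small-group bookkeeping rather than the generic Lie-type computation. One must be certain the degree-three classification is complete in every characteristic, and --- more delicately --- one must avoid the degenerate elements: involutions act trivially on $\mathrm{Sym}^2$, and an order-$4$ element in defining characteristic $7$ produces the repeated eigenvalue pattern $\{-1,1,-1\}$. Circumventing these by choosing an element of order $3$ or $5$ instead is the only place where a short explicit eigenvalue check is genuinely unavoidable.
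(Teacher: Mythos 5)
Your overall strategy --- reduce everything to exhibiting one $p'$-element $g\in L$ with three distinct eigenvalues on every $3$-dimensional irreducible $kL$-module, then transport it into $E$ --- is essentially the paper's own proof, and your classification-and-inspection step is a correct and fuller account of what the paper compresses into ``by inspection, we can choose $x$ of odd prime order with distinct eigenvalues (indeed unless $p=3$, we can take $x$ of order $3$)''. (A minor quibble: $\fA_6$ has no faithful $3$-dimensional module outside characteristic $3$ --- in characteristic $0$ only the cover $3.\fA_6$ does --- but since you handle it as $\PSL_2(9)$ in defining characteristic this costs nothing.) The genuine gap is the last sentence of your first paragraph. Your element $x=(1,\dots,1,g,1,\dots,1)$ depends on the index $i$ of the $3$-dimensional tensor factor, and $i$ is determined by $V$; the assertion that $x$ is ``independent of $V$ in the required sense'' is precisely what fails. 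That parenthetical clause in the statement is not decorative: in the application (Corollary~\ref{cor:last}), $V$ restricted to $E$ is a direct sum of $G$-conjugate twists of $W=U_1\otimes\cdots\otimes U_t$, and since $G$ may permute the $t$ factors of $E$, the $3$-dimensional slot moves from constituent to constituent. On a constituent whose $i$-th tensor factor is the trivial module, your $x$ acts as the identity, so its $1$-eigenspace is that entire constituent and the eigenspace bound collapses.

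The repair is exactly the paper's choice: take $x$ to be the diagonal element with \emph{every} coordinate equal to $g$. This costs a slightly less immediate eigenspace computation than your clean ``exactly $(1/3)\dim V$'' count, because on slots of dimension at least $4$ the element $g$ may have large eigenspaces; but writing $V=U_i\otimes U'$ with $U'=\bigotimes_{l\ne i}U_l$, diagonalizing the action of the remaining coordinates on $U'$, and noting that $x$ preserves each slice $U_i\otimes w$ (for $w$ an eigenvector in $U'$) and acts there as a scalar multiple of $g|_{U_i}$, one sees that every eigenspace of $x$ meets each slice in dimension at most $1$, hence has dimension at most $(1/3)\dim V$. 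Your insistence that a single $g$ work simultaneously on all $3$-dimensional modules --- natural, dual, Frobenius twists, and the cross-characteristic cases --- is exactly what makes this diagonal element genuinely independent of $V$, so the rest of your argument survives unchanged once the placement is corrected.
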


\begin{proof}
First assume that $t=1$, so $\dim V=3$. By inspection, we can choose $x$ of
odd prime order with distinct eigenvalues (indeed unless $p=3$, we can take
$x$ of order $3$).

Suppose that $t > 1$. Let $y$ be the element of $E$ with all coordinates
equal to the $x$ chosen above. Since $x$ has all eigenspaces of dimension
at most $(1/3) \dim U_i$ on $U_i$, the same is true on $V$.
\end{proof}
 
\begin{lem}  \label{lem:last}
 Let $L$ be a finite group with $L = \langle x, y \rangle$ and let  
 $E= L \times\cdots\times L$. Let $k$ be an algebraically closed field
 of characteristic $p$, $V= U_1 \otimes\cdots\otimes U_t$ a nontrivial
 irreducible $kE$-module with $\dim U_i \ge 4$ for some $U_i$.  
 Let $G$ be a diagonal copy of $L$ in $E$.  Let $x_1, y_1$ and $z_1$ be
 elements of $G$ with each coordinate $x,y$ or $z=(xy)^{-1}$ respectively. Then
 $\dim C_V(x_1) + \dim C_V(y_1) + \dim C_V(z_1) \le (9/8) \dim V$.
 In particular, 
 $$\min\{\dim C_V(x_1),  \dim C_V(y_1),  \dim C_V(z_1) \} \le (3/8) \dim V.$$
\end{lem}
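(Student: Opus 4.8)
The plan is to apply Scott's Lemma \cite{scott} to the triple $(x_1,y_1,z_1)$. By construction each coordinate satisfies $xyz=xy(xy)^{-1}=1$, so $x_1y_1z_1=1$, and $x_1,y_1$ generate the diagonal copy $G\cong L=\langle x,y\rangle$. Since $G$ (unlike $E$, which acts without fixed points because $V$ is nontrivial irreducible) may well have nonzero fixed space on $V$, I would use the general form of Scott's Lemma: for three elements with product $1$ generating $G$,
\[
\dim C_V(x_1)+\dim C_V(y_1)+\dim C_V(z_1)\le\dim V+\dim C_V(G)+\dim C_{V^*}(G).
\]
Consequently the asserted bound $(9/8)\dim V$ reduces to the single estimate $\dim C_V(G)+\dim C_{V^*}(G)\le(1/8)\dim V$, and this is exactly where the hypothesis that some factor $U_{i_0}$ has $\dim U_{i_0}\ge4$ must enter.

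To prove that estimate I would compute $\dim C_V(G)=\dim\Hom_L(k,V)$ for the diagonal action and peel off the distinguished factor by the natural isomorphism
\[
\Hom_L(k,\,U_1\otimes\cdots\otimes U_t)\cong\Hom_L\Bigl(U_{i_0}^*,\ \textstyle\bigotimes_{i\ne i_0}U_i\Bigr),
\]
which holds for arbitrary modules, so possible non-semisimplicity in characteristic $p$ is harmless. As $U_{i_0}^*$ is irreducible, $\dim\Hom_L(U_{i_0}^*,M)$ is at most the multiplicity of $U_{i_0}^*$ in the socle of $M=\bigotimes_{i\ne i_0}U_i$, hence at most its total composition multiplicity, which is at most $\dim M/\dim U_{i_0}$. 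This gives
\[
\dim C_V(G)\le\frac{\dim V}{(\dim U_{i_0})^2}\le\frac{\dim V}{16},
\]
and the identical computation for $V^*=U_1^*\otimes\cdots\otimes U_t^*$, whose distinguished factor $U_{i_0}^*$ again has dimension $\ge4$, yields $\dim C_{V^*}(G)\le(1/16)\dim V$. Adding the two bounds produces $\dim C_V(G)+\dim C_{V^*}(G)\le(1/8)\dim V$, as required. (The trivial factors $U_i\cong k$ with $i>m$ are irrelevant throughout: they alter neither $\dim V$ nor any fixed-space dimension, and the distinguished factor automatically has index $i_0\le m$, so one may simply ignore them.)

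Feeding this into the displayed Scott inequality gives $\dim C_V(x_1)+\dim C_V(y_1)+\dim C_V(z_1)\le(9/8)\dim V$, and the final ``in particular'' is immediate since the minimum of three nonnegative numbers is at most their average, here $\frac13\cdot\frac98\dim V=\frac38\dim V$. I do not anticipate a serious obstacle: the argument is driven entirely by the elementary observation that one tensor factor of dimension at least $4$ forces the trivial constituent to occur with multiplicity below $(1/16)\dim V$ in both socle and head. The one step deserving a careful word is the bound $\dim\Hom_L(S,M)\le[M:S]$ for irreducible $S$, which I would justify via $\Hom_L(S,M)\cong\Hom_L(S,\operatorname{soc}M)$ together with $\operatorname{End}_L(S)=k$ (as $k$ is algebraically closed), so that no semisimplicity of $V$ is needed anywhere.
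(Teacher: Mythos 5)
Your proposal is correct and follows essentially the same route as the paper's own proof: Scott's Lemma applied to the diagonal triple $(x_1,y_1,z_1)$, combined with the bound $\dim C_V(G)\le (1/16)\dim V$ (and likewise for $V^*$) obtained by rewriting the fixed space as $\Hom_L\bigl(U_{i_0}^*,\bigotimes_{i\ne i_0}U_i\bigr)$. The only difference is that you spell out the multiplicity estimate $\dim\Hom_L(S,M)\le\dim M/\dim S$ via the socle and $\operatorname{End}_L(S)=k$, which the paper leaves implicit.
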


\begin{proof}
The assumption that some $U_i$ has dimension at least~$4$ gives that
$\dim C_V(G)  \le (1/16) \dim V$. Indeed, assume that $\dim U_1\ge4$. 
Then $C_V(G) \cong \Hom_G(U_1^*, U_2 \otimes\cdots\otimes U_t)$.
Since $\dim U_1\ge4$,
$$\dim C_V(G)\le(1/4)\dim (U_2\otimes\cdots\otimes U_t)
  \le(1/16)\dim V.$$
Similarly, $\dim C_{V^*}(G) \le (1/16) \dim V$. By Scott's Lemma \cite{scott}
$$\begin{aligned}
  \dim C_V(x_1)+&\dim C_V(y_1)+\dim C_V(z_1)\\
  &\le\dim V+\dim C_V(G)+\dim C_{V^*}(G)\le (9/8) \dim V.
\end{aligned}$$
The result follows. 
\end{proof} 

\begin{cor}   \label{cor:last}
 Let $G$ be a finite group, $k$ an algebraically closed field of
 characteristic $p$ and $V$ a faithful irreducible $kG$-module.
 Let $E= L\times\cdots\times L$ be a minimal normal subgroup of $G$ with $L$
 a nonabelian simple group. Then there exists
 a $p'$-element $x \in E$ with $\dim C_V(x) \le (3/8) \dim V$.
\end{cor}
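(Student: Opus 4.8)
The plan is to use Clifford theory to reduce to the action of $E$ on a single irreducible $kE$-submodule, and then to read off the required $p'$-element from one of the Lemmas \ref{lem:natural}, \ref{lem:3dim} and \ref{lem:last} according to the dimensions of the tensor factors. First I would observe that, since $V$ is a faithful irreducible $kG$-module and $E\ne1$ is normal, $C_V(E)$ is a proper $G$-submodule, hence $C_V(E)=0$, and that $E$ acts completely reducibly. Writing an irreducible $kE$-submodule as $W=U_1\otimes\cdots\otimes U_t$, it is nontrivial. By Clifford's theorem $V$ is a direct sum of $G$-conjugates of $W$, and conjugation by $G$ merely permutes the tensor factors (up to automorphisms of $L$), so every summand carries the same multiset of factor-dimensions. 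Crucially, the elements produced by Lemmas \ref{lem:3dim} and \ref{lem:last} lie in the diagonal of $E$, hence are invariant under this permutation action; so it will suffice to bound the relevant eigenspaces on $W$ and sum the estimates over the conjugate summands to obtain the bound on all of $V$.

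With $W=U_1\otimes\cdots\otimes U_t$ fixed, the core of the argument is a case split on the dimensions of the nontrivial factors. If some $U_i$ has dimension at least $4$, I would invoke Theorem \ref{thm:3elt}(b) to produce $p'$-elements $x,y,z\in L$ with $xyz=1$ and $L=\langle x,y\rangle$, pass to their diagonal copies $x_1,y_1,z_1\in E$ (still $p'$-elements), and apply Lemma \ref{lem:last}: its estimate $\dim C_V(\langle x_1,y_1\rangle)\le(1/16)\dim V$ feeds Scott's Lemma to give $\dim C_V(x_1)+\dim C_V(y_1)+\dim C_V(z_1)\le(9/8)\dim V$, whence the minimizer, a $p'$-element, has fixed space of dimension at most $(3/8)\dim V$. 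If instead every nontrivial $U_i$ has dimension exactly $3$, then Lemma \ref{lem:3dim} supplies directly a $p'$-element all of whose eigenspaces have dimension at most $(1/3)\dim V\le(3/8)\dim V$.

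The hard part is not any one of these steps but arranging the cases so that the two places where the machinery breaks down are covered: the exception $(\fA_5,5)$ to Theorem \ref{thm:3elt}(b), and the possibility that all nontrivial $U_i$ are $2$-dimensional, where neither Lemma \ref{lem:3dim} nor Lemma \ref{lem:last} applies. The observation that dissolves both is that a nonabelian simple group with a faithful $2$-dimensional representation over $k$ must embed in $\SL_2(k)$ and hence be $\PSL_2(2^f)\cong\SL_2(2^f)$ in defining characteristic $2$, while $\fA_5\cong\PSL_2(5)$ is itself of Lie type in defining characteristic $5$. I would therefore treat all $L$ of Lie type in the defining characteristic $p$ first and uniformly, using Lemma \ref{lem:natural} (which yields the stronger $(1/3)$-bound and, as the remark following it notes, remains valid for $\PSL_2(5)$ at $p=5$). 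Once this case is removed we have $(L,p)\ne(\fA_5,5)$, so Theorem \ref{thm:3elt}(b) applies without exception, and no nontrivial $U_i$ can be $2$-dimensional; the dimension case split of the previous paragraph is then exhaustive, which completes the proof.
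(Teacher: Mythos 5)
Your proof is correct and takes essentially the same route as the paper's: dispose of $L$ of Lie type in characteristic $p$ first via Lemma~\ref{lem:natural} (which absorbs both the $2$-dimensional factors, since $\dim U_i=2$ forces $p=2$ and $L\cong\SL_2(2^f)$, and the $(\fA_5,5)$ exception via $\fA_5\cong\PSL_2(5)$), then split according to whether some nontrivial $U_i$ has dimension $3$ or at least $4$, invoking Lemma~\ref{lem:3dim}, respectively Theorem~\ref{thm:generationresult} combined with Lemma~\ref{lem:last} and Scott's Lemma. Your explicit remark about the diagonal elements being uniform across the $G$-conjugate $kE$-summands of $V$ is a detail the paper leaves implicit, but the argument is the same.
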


\begin{proof}
If $L$ is of Lie type in characteristic $p$, the result follows by
Lemma~\ref{lem:natural}. So assume that this is not the case.  

Let $W$ be any irreducible $kE$-submodule of $V$.  Then
$W=U_1 \otimes\cdots\otimes U_t$ where each $U_i$ is an irreducible
$kL$-module.

If $\dim U_i=2$, then $p=2$ and $L\cong\SL_2(2^f)$ is of Lie type in
characteristic $2$, whence the result by Lemma~\ref{lem:natural}.

If $\dim U_i=3$ for some $i$, then this will be the case for every
irreducible $kE$-submodule (since any such is a twist of $W$) and
Lemma \ref{lem:3dim} applies. 

In the remaining case, $\dim U_i > 3$ for every nontrivial $U_i$
(and similarly for every twist of $W$). 
By Theorem \ref{thm:generationresult}, we can choose $p'$-elements $x,y,z\in L$
which generate $L$ and have product $1$ aside from $(L,p)=(\fA_5,5)$.
Note that $\fA_5\cong\PSL_2(5)$, so the latter is of Lie type in defining
characteristic, a case we already dealt with (alternatively, it would follow
that each nontrivial $U_i$ has dimension $5$ and so if $x$ is an element
of $E$ with each coordinate of order $3$,
$\dim C_W(x) \le (9/25) \dim V < (3/8) \dim W$). 
It follows by Lemma \ref{lem:last} that there exists a $p'$-element
$g \in E$ with $\dim C_V(g) \le (3/8) \dim V$. 
\end{proof} 

We can now prove the first three parts of Theorem \ref{thm:general}.  

\begin{proof}[Proof of Theorem~\ref{thm:general}]
Parts (a) and (c) follow from Lemmas~\ref{lem:odd}, \ref{lem:even} and
Corollary~\ref{cor:last}.   

Now assume
that $p > \dim V + 2$. By Lemma~\ref{lem:odd}, we may assume that $G$ has no
odd order non-trivial normal subgroups. If $O_2(G) \ne 1$, Lemma~\ref{lem:even}
applies since $p$ does not divide $\dim V$. So we may assume that $F(G)=1$.
Let $N=L \times\cdots\times L$ be a minimal normal subgroup of $G$ with $L$
a nonabelian simple group.  By \cite[Thm.~B]{Gur99}, it follows that one of the
following holds:
 \begin{enumerate}
 \item $p$ does not divide $|N|$; 
 \item $L$ is a finite group of Lie type in characteristic $p$; or
 \item $p=11$,  $N=J_1$ and $\dim V=7$.
\end{enumerate}
Thus by \cite[Cor.~5.7]{GM1} there exists $x \in N$
with $\dim C_V(x) \le (1/3) \dim V$.  If $x$ is a $p'$-element, we are 
done.  So we may assume that $p$ divides the order of $x$ (and so $|N|$).

If $N=J_1$, $p=11$ and $\dim V=7$, an element $g$ of order $19$ satisfies
$\dim C_V(g)=1 < (1/3)\dim V$. If $L$ has Lie type, then
Lemma~\ref{lem:natural} yields a semisimple element $g \in N$ with
$\dim C_V(g) \le (1/3) \dim V$. This completes the proof of (b). 
\end{proof}

\section{Prime Degree}

Recall that a group is called quasi-simple if $G$ is perfect and $G/Z(G)$
is a nonabelian simple group.   

Let $n$ be an odd prime. Let $k$ be a field with $\Char k = p$.   
Let $G \le \GL_n(k)= \GL(V)$ be irreducible and finite. If $G$ is not
absolutely irreducible, then $G$ must be cyclic and any generator has
distinct eigenvalues on $V$ (and is semisimple). So assume that $G$ is
absolutely irreducible and $k$ is algebraically closed.

\begin{lem}  \label{lem:syl}
 If $p \ne n$ and the Sylow $n$-subgroup is not abelian, then $G$ contains a
 subgroup $A$ of order $n$  such that $V$ is a free $kA$-module.
\end{lem}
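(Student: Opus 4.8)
The plan is to pass to a Sylow $n$-subgroup and reduce everything to producing a single element of order $n$ that acts with trace~$0$.

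Let $P$ be a Sylow $n$-subgroup of $G$; by hypothesis $P$ is nonabelian, and $P$ acts faithfully on $V$. Since $p\ne n$ the algebra $kP$ is semisimple, so $V|_P$ is a sum of irreducibles, each of dimension a power of $n$; as $\dim V=n$, either $V|_P$ is irreducible or it is a sum of $n$ lines. The latter would force $[P,P]$ into the kernel of $P$ on $V$, contradicting faithfulness together with nonabelianness, so $V|_P$ is irreducible of degree $n$ and $Z:=Z(P)$ is cyclic. As $P$ is nilpotent this irreducible is monomial, $V|_P=\mathrm{Ind}_H^P\lambda$ with $\lambda$ linear and $[P:H]=n$; since $n$ is the least prime dividing $|P|$, $H\trianglelefteq P$, and faithfulness forces $[H,H]\le\mathrm{core}_P(\ker\lambda)=1$, so $H$ is abelian. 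Writing $\chi$ for the Brauer character of $V$, induction from the normal subgroup $H$ gives $\chi(x)=0$ for all $x\in P\setminus H$. Finally, for an element $a$ of order $n$ the relation $\chi(a)=0$ is equivalent to $V|_{\langle a\rangle}$ being free: the eigenvalues are $n$-th roots of unity, and the only way a sum of $n$ of them can vanish is that each $n$-th root occurs once, the unique nonnegative relation among $n$-th roots of unity being $1+\zeta+\dots+\zeta^{n-1}=0$. So it suffices to produce $a\in P$ of order $n$ with $\chi(a)=0$.

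Fix $t\in P\setminus H$ and let $\phi$ be conjugation by $t$ on $H$; then $\phi$ has order exactly $n$ and $Z=H^{\phi}\le H$. Let $z$ generate $\Omega_1(Z)$, the unique subgroup of order $n$ of $Z$; it acts on $V$ as a scalar $\omega$, a primitive $n$-th root of unity. The key observation is that if a non-central $x$ satisfies $x\sim xz$, then $\chi(x)=\chi(xz)=\omega\,\chi(x)$, whence $\chi(x)=0$. I will therefore look for an element $x$ of order $n$ with $x\sim xz$. Here the oddness of $n$ enters: since $Z$ is cyclic, if all elements of order $n$ lay in $Z$ then $P$ would have a unique subgroup of order $n$ and hence, $n$ being odd, be cyclic; as $P$ is nonabelian this is impossible, so a non-central element of order $n$ exists. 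Now split cases. If some element of order $n$ lies in $P\setminus H$ we are done, since $\chi$ vanishes there. Otherwise every element of order $n$ lies in $H$, so together with $1$ they form the elementary abelian subgroup $\Omega_1(H)$, on which $\phi$ acts nontrivially (it is not contained in $Z$). Viewing $\Omega_1(H)$ as a vector space over the field of $n$ elements, $\phi-1$ is nilpotent (as $\phi^n=1$ in characteristic $n$) with kernel $\Omega_1(H)\cap Z=\langle z\rangle$ of dimension one; hence $\phi-1$ is a single Jordan block of size $\ge2$, so its one-dimensional kernel $\langle z\rangle$ lies in its image and $z=(\phi-1)x$ for some $x\in\Omega_1(H)$, necessarily of order $n$. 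Then $txt^{-1}=xz$, so $x\sim xz$ and $\chi(x)=0$; thus $A=\langle x\rangle$ has order $n$ and $V$ is free over $kA$.

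The main obstacle is this last step. A non-central element of order $n$ need not act freely, and it need not lie off $H$: in the exponent-$n^2$ situation (for instance when $P$ is a modular group) the elements of order $n$ may all sit inside the chosen abelian $H$, so one cannot simply invoke the vanishing of $\chi$ outside $H$. The real content is to pin down an $x$ that is genuinely conjugate to $xz$, which the single-Jordan-block analysis of $\phi-1$ on the bottom layer $\Omega_1(H)$ provides. The hypothesis that $n$ is odd is used twice and appears essential: to guarantee a non-central element of order $n$ (a unique subgroup of order $n$ forces $P$ cyclic only for $n$ odd, the generalized quaternion groups being the even exception) and to ensure that $\phi-1$ is unipotent on $\Omega_1(H)$.
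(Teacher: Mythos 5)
Your proof is correct, but it takes a genuinely different and considerably heavier route than the paper. The paper's proof is two lines: pass to a \emph{minimal nonabelian} $n$-subgroup $N$ of $G$; such an $N$ is forced to act irreducibly on $V$ (nonabelian plus faithful in prime dimension $n$), its center has index $n^2$, so the character of $V|_N$ vanishes off $Z(N)$, and any noncentral element of order $n$ (which exists since $n$ is odd) then has trace zero and acts freely. You instead stay inside the full Sylow $n$-subgroup $P$, prove $V|_P$ is irreducible and monomial over an abelian normal subgroup $H$ of index $n$, and manufacture an order-$n$ element of vanishing Brauer character either directly off $H$ or, in the delicate case where all order-$n$ elements lie in $H$, via the Jordan-block analysis of $\phi-1$ on $\Omega_1(H)$ producing $x$ with $txt^{-1}=xz$. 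Both routes are sound; the Miller--Moreno reduction buys the paper brevity (the minimal subgroup already has the vanishing-off-center property for free, with no case analysis), while your argument is more self-contained and makes explicit where the free element sits inside $P$, including the modular-group-type configurations that the minimality trick silently absorbs. Two small points of care in your write-up: the identity $\chi(xz)=\omega\,\chi(x)$ should be stated with the Teichm\"uller-type lift $\tilde\omega$ of $\omega$ to characteristic zero, since $\chi$ is a Brauer character; and your insistence on Brauer characters is not cosmetic but essential --- the implication ``trace zero and order $n$ implies $V|_{\langle a\rangle}$ free'' is false for traces computed in $k$ when $\Char k=p>0$ (e.g.\ in characteristic $2$ with $n=7$, eigenvalues $1,1,1,1,\zeta,\zeta^2,\zeta^4$ give trace zero with a $4$-dimensional fixed space), and the uniqueness of the vanishing sum $1+\zeta+\dots+\zeta^{n-1}$ holds only over characteristic zero, which your lifted values correctly supply.
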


\begin{proof}
Let $N$ be a minimal nonabelian $n$-subgroup of $G$. Then $N$ acts irreducibly
and is extraspecial, whence the result is clear.
\end{proof}  

\begin{lem} \label{lem:prime-imp}
 Suppose that $G$ acts imprimitively.
 \begin{enumerate}
  \item[\rm(a)] If $n=p$, then the Sylow $n$-subgroup $S$ of $G$ has order~$n$
   and $V$ is a free $kS$-module.
  \item[\rm(b)] If $n \ne p$, then there exists an element $x \in S$ with
   order a power of $n$ having all eigenspaces of dimension at most $1$. 
 \end{enumerate}
\end{lem}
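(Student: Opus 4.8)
The plan is to use the primality of $n$ to pin down the block structure completely, and then read off both statements from the action of a single well-chosen $n$-element on that structure. Since $G$ is irreducible and acts imprimitively, I would fix a system of imprimitivity $V=V_1\opl\cdots\opl V_m$ with $m\ge2$, on which $G$ permutes the summands; irreducibility forces this permutation action to be transitive, so all $V_i$ share a common dimension $d$ with $md=n$, and primality of $n$ gives $m=n$, $d=1$. Thus $V$ is a sum of $n$ lines permuted transitively, and the permutation action yields a surjection $\pi\co G\to\bar G\le\fS_n$ onto a transitive subgroup of $\fS_n$, whose kernel $K$ acts diagonally on the $V_i$ and hence embeds into $(k^\times)^n$; in particular $K$ is abelian.

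Next I would analyze the Sylow $n$-subgroup $S$ through $\pi$. As $\bar G$ is transitive of prime degree $n$, its order is divisible by $n$, and since the $n$-part of $|\fS_n|$ equals $n$, a Sylow $n$-subgroup of $\bar G$ has order $n$ and is generated by an $n$-cycle. Because $\pi$ is surjective, $\pi(S)$ is such a Sylow subgroup, so every nontrivial element of $\pi(S)$ is an $n$-cycle. For part~(a), where $n=p$, the key observation is that $k^\times$ contains no element of order $p$, whence $K$ is a $p'$-group; therefore $S\cap K=1$ and $\pi$ restricts to an isomorphism $S\cong\pi(S)$, giving $|S|=n$. A generator $s$ of $S$ then permutes the lines $V_i$ in a single $n$-cycle.

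I would then make the block basis explicit. Reindexing so that $s$ sends $V_i$ to $V_{i+1}$ cyclically, I pick $0\ne e_1\in V_1$ and set $e_{i+1}=s(e_i)$; since $s^n=1$, this produces a basis $e_1,\dots,e_n$ on which $s$ acts by the cyclic permutation with all weights equal to $1$. Hence $V|_S$ is the regular representation of $S$, i.e.\ $V\cong kS$ is free, which is exactly part~(a). For part~(b), with $n\ne p$, I instead take any $x\in S$ with $\pi(x)$ an $n$-cycle $\sigma$; it has order a power of $n$, permutes the lines cyclically, and in an adapted basis is a monomial matrix $x e_i=c_i e_{\sigma(i)}$ with every $c_i\in k^\times$. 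A direct computation gives $x^n=c\,I$ with $c=\prod_i c_i\ne0$, and characteristic polynomial $T^n-c$. Since $p\nmid n$ and $c\ne0$, this polynomial is separable, so $x$ has $n$ distinct eigenvalues and all its eigenspaces are one-dimensional.

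The genuinely substantive points are few: the reduction to one-dimensional blocks is immediate from primality, and everything then turns on the behaviour of $T^n-c$. The main obstacle to state cleanly is precisely this characteristic-dependent dichotomy: in characteristic $p=n$ the polynomial $T^n-c$ degenerates, so part~(a) must be argued through the weight-normalization above (yielding freeness) rather than through distinct eigenvalues, whereas for $p\ne n$ its separability directly delivers the $n$ distinct eigenvalues needed in~(b). A secondary point requiring care is the equality $|S|=n$ in~(a), which rests entirely on $k^\times$ having no $p$-torsion; I expect the bookkeeping of the monomial matrix — that its order lies in $S$, the computation $x^n=c\,I$, and the characteristic polynomial $T^n-c$ — to be the only routine calculation.
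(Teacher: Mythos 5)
Your proposal is correct and follows essentially the same route as the paper's proof: decompose $V$ into $n$ lines permuted transitively (forced by primality of $n$), note the kernel $N$ of the permutation action is diagonal hence a $p'$-group when $p=n$ (giving $|S|=n$ and freeness), and for $p\ne n$ take an $n$-element mapping to an $n$-cycle, so that $x^n$ is scalar and the characteristic polynomial $T^n-c$ is separable. You merely supply details the paper leaves implicit (the basis normalization yielding the regular $kS$-module and the explicit monomial-matrix computation), which is fine.
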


\begin{proof}
Since $n$ is prime, $G$ imprimitive implies that $G$ permutes $n$ one
dimensional (linearly independent) subspaces. Thus, $G$ surjects onto a
transitive permutation group of degree $n$.  In particular, $n$ divides the
order of $G$. Let $N$ be the normal subgroup of $G$ stabilizing each of
the $n$ one dimensional spaces. If $p = n$, then $N$ has order prime to $p$,
whence $S$ has order $n$ and the result follows.

So assume that $p \ne n$.  Let $x \in G$ be an $n$-element with $x$ not in $N$.
Then $x^n$ is central in $\GL(V)$ and its minimal polynomial is $x^n-a$ for
some $a \in k^\times$, whence the result. 
\end{proof}

Note that since $n$ is prime if $N$ is a minimal normal noncentral
subgroup of $G$, then either $N$ is an elementary abelian
$r$-group for some prime $r \ne p$ or $N$ acts irreducibly.   

If $N$ acts irreducibly, then either $N$ is an $n$-group and 
$p \ne n$, whence $N$ contains a (semisimple) element with distinct
eigenvalues or $N$ is quasi-simple.  If $Z(N) \ne 1$, then $N$ contains
a semsimple element $x$ with $C_V(x)=0$.  If $N$ is simple, then 
Corollary \ref{cor:simple} implies that there exists $x \in N$ semisimple
with $\dim C_V(x)  \le (1/3)\dim V$ (if $N=\fA_5$, the result follows by
inspection).  

If $N$ is abelian and not a $2$-group, then  \cite[Cor.~ 1.3]{GMa} implies
that there exists a (semisimple) $x \in N$ with  $\dim C_V(x) < (1/3) \dim V$.
Finally, if $N$ is a $2$-group and $2$ is a multiplicative generator
modulo $n$, then $|N|=2^{n-1}$ (because $G$ permutes transitively the $n$
eigenspaces of $N$ and the smallest irreducible module of a cyclic group of
order $n$ in characteristic $2$ has size $2^{n-1}$). It follows that there
exists $x \in N$ with $-x$
a reflection, whence $\dim C_V(x) = 1 \le (1/3) \dim V$.   

In particular, we have proved part (d) of Theorem \ref{thm:general}.  
 
We next consider quasi-simple groups and  first show:

\begin{thm} \label{thm:primedim1}
 Let $G$ be a finite quasi-simple group of Lie type in characteristic $p$.
 Let $k$ be an algebraically closed field of characteristic $p$. Suppose that
 $V$ is a faithful irreducible $kG$-module of odd  prime dimension $n \le p$.  
 Then $V$ is a twist of a restricted module and one of the following holds:
 \begin{enumerate}[\rm(1)]
  \item  $G=\SL_2(q)$; 
  \item  $G=G_2(q)$ or ${^2}G_2(q)'$, $n=7$;
  \item  $G=\Omega_n(q)$ and $V$ is a Frobenius twist of the natural module; or 
  \item  $G=\SL_n(q)$ or $\SU_n(q)$ and $V$ is a Frobenius twist of the
   natural module or its dual.
 \end{enumerate}
 Moreover, either there exists a semisimple element $x \in G$ with  all
 eigenspaces of dimension at most $1$ on $V$ or $G=\SL_2(p)$ and $p \le 2n - 3$.
 In all cases, there exists a semisimple element $x \in G$ with $\dim C_V(x) \le 1$. 
\end{thm}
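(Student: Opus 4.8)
The plan is to treat the two assertions in turn: first the classification of the pairs $(G,V)$, and then the construction of the required semisimple element in each case. For the classification I would start from Steinberg's tensor product theorem, which writes $V\cong L(\lambda_1)^{[e_1]}\otimes\cdots\otimes L(\lambda_r)^{[e_r]}$ as a tensor product of Frobenius twists of restricted irreducible modules, so that $\dim V=\prod_i\dim L(\lambda_i)$. Since $n=\dim V$ is prime, all but one of the factors must be trivial; hence $V$ is a single Frobenius twist of a restricted irreducible module $L(\lambda)$ with $\dim L(\lambda)=n$. This already establishes that $V$ is a twist of a restricted module, and reduces the remaining task to listing the restricted $\lambda$ for which $\dim L(\lambda)$ is a prime $n\le p$.

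For this last step I would exploit the hypothesis $n\le p$ to keep $\lambda$ small. Because $\dim L(\lambda)=n\le p$, the highest weight $\lambda$ lies in the lowest alcove, so that the Weyl module $V(\lambda)$ is irreducible and $\dim L(\lambda)$ is given by the Weyl dimension formula; alternatively, and more robustly, the list can be read directly off L\"ubeck's tables of low-dimensional irreducible representations. Since the Weyl dimension formula factors as a product over the positive roots, requiring that product to equal a single odd prime forces $\lambda$ to be a fundamental weight affording a natural module. This yields exactly $\SL_2(q)$ acting on a symmetric power of its natural module, $\SL_n(q)$ or $\SU_n(q)$ on the natural module or its dual, $\Omega_n(q)$ on the natural orthogonal module, and $G_2(q)$ on its $7$-dimensional module; the Ree groups ${}^2G_2(q)'$ are disposed of by direct inspection. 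I expect the main obstacle to lie precisely here: one must be sure that this list of small restricted modules is complete and that no exceptional restricted weight yields an additional prime-dimensional module, so the representation-theoretic bookkeeping --- dimension bounds together with L\"ubeck's classification --- is the technical heart of the argument.

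For the existence of the element I would handle cases (2)--(4) uniformly. There $V$ is a natural (or $7$-dimensional) module, and it suffices to produce a regular semisimple element whose $n$ eigenvalues on $V$ are pairwise distinct: a generator of a suitable maximal torus --- a Singer-type cycle for $\SL_n(q)$ and $\SU_n(q)$, and an analogous regular torus element for $\Omega_n(q)$ and $G_2(q)$ --- has this property provided $q$ is not too small, and the faithfulness of the $n$-dimensional module forces $q$ to be large enough, the finitely many small $q$ being checked by hand. Such an $x$ has every eigenspace of dimension at most $1$, in particular $\dim C_V(x)\le1$, which is the stronger conclusion of the theorem.

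Finally, for $G=\SL_2(q)$ the module is the symmetric power $L((n-1)\omega_1)$, on which a diagonal element $\mathrm{diag}(\lambda,\lambda^{-1})$ acts with eigenvalues $\lambda^{\,n-1-2i}$ for $0\le i\le n-1$. As $n$ is odd, the zero weight contributes the eigenvalue $1$, so $\dim C_V(x)\ge1$ always. The eigenvalues are pairwise distinct exactly when $\lambda^2$ has order at least $n$, and taking $\lambda$ of maximal order in the nonsplit torus shows this can be arranged as soon as $q\ge2n-1$; this covers every $q=p^f$ with $f\ge2$ and every prime $q=p\ge2n-1$, giving an $x$ with all eigenspaces of dimension at most $1$. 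In the remaining range $q=p\le2n-3$ one cannot force all the eigenvalues to be distinct, but since $p\ge n$ one may still choose $\lambda$ so that $\lambda^2$ has order exceeding $(n-1)/2$, which makes $1$ a simple eigenvalue and hence $\dim C_V(x)\le1$. Assembling the four cases gives the semisimple element promised in the statement.
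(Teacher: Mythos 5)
Your overall route coincides with the paper's: Steinberg's tensor product theorem plus the primality of $n$ gives that $V$ is a Frobenius twist of a restricted module; the hypothesis $n\le p$ puts $\lambda$ in the regime where $L(\lambda)$ coincides with the Weyl module (the paper cites \cite{jantzen} for exactly this), so the Weyl dimension formula applies and the classification reduces to characteristic~$0$; and your torus-element constructions for cases (2)--(4), together with your computation in the nonsplit torus of $\SL_2(q)$ --- eigenvalues $\lambda^{n-1-2i}$ pairwise distinct once $q\ge 2n-1$, only the range $q=p\le 2n-3$ surviving as an exception, and the zero weight being the unique source of the eigenvalue $1$, so that $\dim C_V(x)\le 1$ in all cases --- reproduce the paper's argument with the exact same thresholds. (The paper phrases the $\SL_2$ step as: two distinct weights of a restricted module coinciding on an element of order $q+1$ forces $2\dim V-2\ge q+1$, and no nontrivial weight vanishes on such an element; this is your computation.)

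The one genuine gap is the classification step in characteristic~$0$. You assert that requiring the Weyl dimension product to equal a single odd prime ``forces $\lambda$ to be a fundamental weight affording a natural module,'' but this is false as stated: for $\SL_2$ every symmetric power $L((n-1)\omega_1)$ has prime dimension whenever $n$ is prime, and $(n-1)\omega_1$ is not fundamental --- this is your own case (1). More importantly, proving completeness of the four-case list across all types is a genuine theorem, namely Gabber's classification of prime-dimensional irreducible representations of simple algebraic groups in characteristic~$0$, which is precisely what the paper invokes (\cite[1.6]{katz}). Your proposed fallback, L\"ubeck's tables \cite{lubeck}, cannot substitute for it: those tables only cover degrees below a bound depending on the rank, whereas here $n$ is an arbitrary prime $\le p$ and the rank may be small; already for $\SL_3$ one must show that no $L(a\omega_1+b\omega_2)$ of large prime dimension exists, which no finite table does (here a direct factorization of $(a+1)(b+1)(a+b+2)/2$ works, but the analogous analysis must be carried out for every type). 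So you correctly located the technical heart of the proof, but neither of your two substitutes actually closes it; you need Gabber's theorem, or must perform the root-system analysis in full generality.
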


\begin{proof}  For the first part, 
it suffices to prove the result for algebraic groups. Let $V=L(\lambda)$ where
$\lambda$ is the highest weight for $V$. By the Steinberg tensor product
theorem and the fact that $n$ is prime, $V$ is a Frobenius twist of some
restricted irreducible. So we may assume that $\lambda$ is $p$-restricted.
By \cite{jantzen}, it follows that $L(\lambda)$ is also the Weyl module
whence the Weyl dimension formula holds for $L(\lambda)$. Thus, it suffices
to work in characteristic $0$. The result in that case is due to Gabber,
see \cite[1.6]{katz}.

Aside from the first case, any regular semisimple element of sufficiently large order
will have $n$ distinct eigenvalues on $V$.  Suppose that $G=\SL_2(q)$.
Let $x \in G$ have order $q+1$. If two distinct weights for a restricted
module can coincide on $x$, then $2 \dim V - 2 \ge q + 1$. This can only
occur if $q=p \le 2n -3$. Moreover, no nontrivial weight vanishes on an element
of order $q + 1$. Since a restricted irreducible $k \SL_2(q)$-module has
distinct weights, the result follows. 
\end{proof}
 
\begin{thm}  \label{thm:primedim2}
 Let $n$ be an odd prime. Let $G\le\GL(V)=\GL_n(k)$ be a finite irreducible
 quasi-simple group, where $k$ is an algebraically closed field of
 characteristic $p$.  
 \begin{enumerate}
  \item[\rm(a)]  If $p > \max\{2n - 3, n + 2\}$ or $p=0$, then there  
   exists a semisimple $x \in G$ with all eigenvalues distinct.
  \item[\rm(b)] If $p > n + 2$, there exists a semisimple element $x \in G$
   with $\dim C_V(x) \le 1$.
 \end{enumerate}
\end{thm}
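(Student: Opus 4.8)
The plan is to prove Theorem~\ref{thm:primedim2} by reducing to the quasi-simple classification and then treating the defining-characteristic case and the cross-characteristic case separately, always exploiting that $n$ is an \emph{odd prime} so that irreducible modules are severely constrained. First I would observe that since $G$ is quasi-simple and acts irreducibly in dimension $n$, the center $Z(G)$ acts by scalars, so it suffices to understand the simple quotient $G/Z(G)$ and its projective representations of degree $n$. The hypothesis $p>n+2$ already forces $p\nmid n$ and rules out many small-characteristic phenomena. The key dichotomy is whether $G$ is of Lie type in the \emph{defining} characteristic $p$ or not.

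In the defining-characteristic case, Theorem~\ref{thm:primedim1} does almost all the work: it classifies the possibilities (cases (1)--(4)) and already produces a semisimple element with all eigenspaces of dimension at most~$1$, \emph{except} when $G=\SL_2(p)$ with $p\le 2n-3$. So for part~(a), where I additionally assume $p>2n-3$, this exceptional case is excluded and I get a semisimple element with distinct eigenvalues directly; for part~(b), where I only assume $p>n+2$, the exceptional $\SL_2(p)$ case can occur, but the last line of Theorem~\ref{thm:primedim1} still guarantees a semisimple $x$ with $\dim C_V(x)\le 1$, which is exactly the conclusion of~(b). Thus the defining-characteristic case is essentially handed to me by the previous theorem, and the argument here is just bookkeeping about which hypothesis kills which exception.

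The substantive work is the cross-characteristic case, where $G$ is quasi-simple but not of Lie type in characteristic $p$ (including alternating and sporadic groups, and Lie-type groups in a different characteristic). Here I would invoke the known classification of quasi-simple groups possessing an irreducible projective representation of odd prime degree $n$, combined with the hypothesis $p>n+2$ (equivalently, $p\ge n+3$, so the representation is of a group whose relevant $n$-dimensional representations are controlled by lower bounds on degrees of faithful representations). For each family on this short list --- essentially $\SL_2(q)$ in cross characteristic, small alternating and sporadic groups, and a handful of Lie-type groups with an exceptional low-dimensional representation --- I would exhibit, or cite from the Atlas / the explicit character tables, a semisimple element (a $p'$-element) whose eigenvalues are distinct in case~(a) or satisfy $\dim C_V(x)\le 1$ in case~(b). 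Since $n$ is prime, an element of order divisible by a primitive prime divisor of $q^n-1$ (a Zsigmondy prime) typically acts with $n$ distinct eigenvalues, so the generic construction is to pick a regular semisimple element of large order and check the few degenerate families by hand.

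The main obstacle I expect is the finite list of cross-characteristic exceptions where no single semisimple element has all eigenvalues distinct but one can still bound $\dim C_V(x)\le 1$ --- this is precisely the gap between parts~(a) and~(b). Establishing~(a) requires the stronger hypothesis $p>\max\{2n-3,n+2\}$ precisely to eliminate these; the reflection-type elements of order dividing $q+1$ in $\SL_2$, together with any sporadic coincidences where two weights or two eigenvalues are forced to collide, are the cases to watch. I would organize the proof so that~(b) is proved first (the weaker conclusion covering all cases via Theorem~\ref{thm:primedim1} and a uniform Zsigmondy-element choice in cross characteristic), and then~(a) follows by noting that the additional hypothesis $p>2n-3$ removes exactly the $\SL_2(p)$ obstruction and the remaining finite exceptions, upgrading ``$\dim C_V(x)\le 1$'' to ``all eigenvalues distinct.''
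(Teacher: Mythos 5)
Your treatment of the defining-characteristic case agrees with the paper's: Theorem~\ref{thm:primedim1} supplies a semisimple element with all eigenspaces of dimension at most~$1$ except when $G=\SL_2(p)$ with $p\le 2n-3$, so the extra hypothesis $p>2n-3$ in part~(a) excludes the exception, while the final sentence of Theorem~\ref{thm:primedim1} gives $\dim C_V(x)\le 1$ for part~(b). The gap lies in how you set up the complementary case. You split according to whether $G$ is of Lie type in the defining characteristic, and propose to handle everything else via ``the known classification of quasi-simple groups possessing an irreducible projective representation of odd prime degree $n$.'' That classification (Dixon--Zalesskii \cite[Thm.~1.2]{DZ}, which is what the paper cites) is a characteristic-zero result; it applies to your situation only after you know that the $kG$-module $V$ lifts to characteristic~$0$, i.e.\ only when $p\nmid|G|$. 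But your bucket ``not of Lie type in characteristic $p$'' also contains groups with $p\mid|G|$, and for those nothing in your outline works: mod-$p$ irreducibles of such groups need not lift, and Atlas/Zsigmondy checks against complex character tables will not detect them.

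This sub-case is not vacuous: $J_1$ has a faithful irreducible $7$-dimensional module in characteristic $11$ (and only there --- its smallest complex representation has degree $56$), with $n=7$ and $p=11>n+2=9$, so it falls squarely under part~(b) yet is invisible to any characteristic-zero classification. The missing ingredient is exactly what the paper invokes: the right dichotomy is whether $p$ divides $|G|$, and by \cite[Thm.~B]{Gur99}, since $n<p-2$, a quasi-simple irreducible subgroup of $\GL_n(k)$ whose order is divisible by $p$ must be of Lie type in characteristic $p$, with the single exception $(G,p,n)=(J_1,11,7)$, where an element of order~$19$ has $7$ distinct eigenvalues. With that theorem inserted, your ``cross-characteristic'' case collapses to $p\nmid|G|$ (or $p=0$), the modules lift, and your DZ-plus-inspection plan coincides with the paper's actual proof (which simply notes that the listed possibilities, mostly Weil representations, are checked directly). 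Your remark that the relevant representations are ``controlled by lower bounds on degrees of faithful representations'' gestures at this, but you neither name the result nor record the $J_1$ exception, and without it the reduction step fails.
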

 
\begin{proof}
If $p$ divides $|G|$, then by \cite[Thm.~B]{Gur99}, $G$ is a finite group of
Lie type in characteristic $p$ (or $G=J_1, p=11$ and $n=7$, where we may take
$x$ of order~19) and Theorem~\ref{thm:primedim1} applies.

If $p$ does not divide $|G|$, then either $p=0$ or $V$ is the reduction of a
characteristic $0$ module. The list of possible groups and modules is given
in \cite[Thm.~1.2]{DZ}.   
It is straightforward to see that the conclusion holds for these groups
(most of the examples are related to Weil representations).  
\end{proof} 

Theorem \ref{thm:prime} now follows from the previous results aside from the
case $n=3$ and $\Char k =5$. In that case, any noncentral semisimple element
of order greater than $2$ has distinct eigenvalues.
The next example shows that we do need some restriction on the characteristic.

\begin{example}
Let $k$ be an algebraically closed field of positive characteristic $p$.
Let $G= \SL_p(k)= \SL(V)$.   Then $G$ acts by conjugation on 
$W:=\mathrm{End}(V)$.  Since every semisimple $g\in G$ is centralized
by a maximal torus, we see that $\dim C_W(g) \ge p$.  Note that $W$
is a uniserial module with two trivial composition factors and an irreducible
composition factor  $V$ of dimension $p^2 -2$.  Clearly,  $\dim C_V(g) \ge
\dim C_W(g) - 2 \ge p-2$ for any semisimple element $g$ of $G$ (and
since semisimple elements are Zariski dense, this is true for any $g \in G$).
Note that $\dim V$ can be prime (eg, this is true for $p=5, 7, 13$).
The same holds for $G(p^a):=\SL_p(p^a)$ for any $a \ge 1$.
\end{example}

\section{Characteristic Polynomials of Representations}

Let $G$ be a group and $V$ a finite dimensional $kG$-module with 
$k$ a field of characteristic $p \ge 0$. Let $\ch_V$ denote the function
from $G$ to $k[x]$ defined by $\ch_V(g) = \det (xI - g)$. Note that two modules
have the same function if and only if their composition factors are the same.
Our results on bounds for $\dim C_V(g)$ for some $g \in G$ can be phrased in
asking: given a $kG$-module $V$ what is the largest power of $\ch_k$ that
divides $\ch_V$?

Frank Calegari asked what one could say if $V$ and $W$ are two irreducible
$kG$-modules and $\ch_V$ divides $\ch_W$.   Calegari and Gee \cite{CG} used
this information to study Galois representations in very small dimensions. 

While we suspect that this does impose some constraints on the representations, 
we give some examples to show that it is not that rare (at least for groups
of Lie type and algebraic groups in the natural characteristic).

\begin{example} \label{tensor}
 Let $G$ be a simple algebraic group over an algebraically closed field of
 characteristic $p > 0$.  Let $V$ be an irreducible $kG$-module. 
 By Steinberg's tensor product theorem, $V = V_0 \otimes\cdots\otimes V_m$
 where $V_i$ is a twist of a restricted module by the $i$th power of Frobenius.
 If $0$ is a weight for some $V_j$, then clearly $\ch_V$ is a multiple of
 $\ch_{V_j'}$, where $V_j'$ is the tensor product of all the $V_i, i \ne j$.
\end{example}

The following example was shown to us by N. Wallach (in particular see
\cite{Wa}).

\begin{example} \label{cartan mult}
 Let $k$ be an algebraically closed field of characteristic~$0$. Let $G$ be
 a simple algebraic group over $k$. Let $\lambda$ and $\mu$ be dominant
 weights with $\mu$ in the root lattice.  Then
 $\ch_{V(\lambda)}$ divides $\ch_{V(\lambda + \mu)}$.
\end{example}

It follows the same is true in positive characteristic $p$ as long as $p$ is sufficiently
large (depending upon $\lambda$ and $\mu$).   Here are a few cases where one
can compute this directly.   We give one such case.

\begin{example}  \label{natural1}  Let $k$ be an algebraically closed field of characteristic
$p \ge 0$.  Let $G=\SL_n(k)$ and let $V = V(\lambda_1)$
be the natural module.     Then $\ch_{V((s+n) \lambda_1)}$ is a multiple of 
$\ch_{V(s \lambda_1)}$ for $p > s + n$ (or $p=0$).
\end{example}

\vskip 3pc

\end{document}